\theoremstyle{plain}
\newtheorem{proposition}{Proposition}[section]
\newtheorem{theorem}[proposition]{Theorem}
\newtheorem{corollary}[proposition]{Corollary}
\newtheorem{lemma}[proposition]{Lemma}
\newenvironment{customthm}[1]
  {\innercustomthm}
  {\endinnercustomthm}
\theoremstyle{definition}
\newtheorem{definition}[proposition]{Definition}
\newtheorem{question}[proposition]{Question}
\newtheorem{construction}[proposition]{Construction}
\theoremstyle{remark}
\newtheorem{remark}[proposition]{Remark}
\newtheorem{example}[proposition]{Example}
\newtheorem*{claim}{Claim}
\newtheorem*{notation}{Notation}
\newcommand{\lk}{\operatorname{lk}}
\newcommand{\sign}{\operatorname{sign}}
\newcommand{\wt}{\widetilde}
\newcommand{\wh}{\widehat}
\newcommand{\sm}{\setminus}
\newcommand{\Q}{\mathbb{Q}}
\newcommand{\Z}{\mathbb{Z}}
\newcommand{\C}{\mathbb{C}}
\newcommand{\R}{\mathbb{R}}
\newcommand{\Arf}{\operatorname{Arf}}
\newcommand{\op}[1]{\operatorname{#1}}
\newcommand{\ol}[1]{\overline{#1}}
\newcommand{\dsign}{\operatorname{dsign}}
\newcommand{\pt}{\op{pt}}
\newcommand{\csum}{\#\,}
\newcommand{\sn}{\op{sn}}
\newcommand{\ul}{\underline}
\newcommand{\sarr}{\xrightarrow{\sim}}
\newcommand{\Int}{\op{Int}}
\newcommand{\id}{\op{id}}
\newcommand{\bsm}{\left(\begin{smallmatrix}}
\newcommand{\esm}{\end{smallmatrix}\right)}
\begin{document}
\title{Stably slice disks of links}

\author{Anthony Conway}
\address{Department of Mathematics, Durham University, United Kingdom}
\email{anthonyyconway@gmail.com}

\author{Matthias Nagel}
\address{Department of Mathematics, University of Oxford, United Kingdom}
\email{nagel@maths.ox.ac.uk}

\date{\today}

\begin{abstract}
We define the stabilizing number $\operatorname{sn}(K)$ of a knot $K \subset S^3$ as the minimal number $n$ of $S^2 \times S^2$ connected summands required for $K$ to bound a nullhomologous locally flat disk in $D^4 \#\, n S^2 \times S^2$. This quantity is defined when the Arf invariant of $K$ is zero.
 We show that $\operatorname{sn}(K)$ is bounded below by signatures and Casson-Gordon invariants and bounded above by the topological $4$-genus~$g_4^{\operatorname{top}}(K)$.
We provide an infinite family of examples with~$\operatorname{sn}(K)<g_4^{\operatorname{top}}(K)$.
\end{abstract}
\maketitle

\section{Introduction}
Several questions in $4$--dimensional topology simplify considerably after connected summing with sufficiently many copies of $S^2\times S^2$. 
For instance, Wall showed that homotopy equivalent, 
 simply-connected smooth $4$--manifolds become diffeomorphic after enough such \emph{stabilizations} \cite{Wall}. 
While other striking illustrations of this phenomenon can be found in \cite{FreedmanKirby, Quinn, Bohr, Auckly15, JuhaszZemke}, this paper focuses on embeddings of disks in stabilized $4$--manifolds.

A link $L \subset S^3$ is \emph{stably slice} if there exists $n \geq 0$ such that the components of~$L$ bound a collection of disjoint locally flat nullhomologous disks in the manifold~$D^4 \csum n S^2 \times S^2.$ 
The \emph{stabilizing number} of a stably slice link is defined as 
\[ \operatorname{sn}(L):= \operatorname{min} \lbrace n \ | \ L \text{ is slice in } D^4 \csum n S^2 \times S^2 \rbrace. \]

Stably slice links have been characterized by Schneiderman~\cite[Theorem~1, Corollary~2]{Schneiderman}.
We recall this characterization in Theorem~\ref{thm:Schneiderman}, but note that a knot~$K$ is stably slice if and only if $\op{Arf}(K)=0$~\cite{CochranOrrTeichner}.

\medbreak
This paper establishes lower bounds on $\operatorname{sn}(L)$ and, in the knot case, describes its relation to the $4$--genus.
Our first lower bound uses the multivariable signature and nullity~\cite{Cooper,CimasoniFlorens}. 
Consider the subset $\mathbb{T}^m=(S^1 \setminus \lbrace 1 \rbrace)^m \subset \C^m$ of the~$m$-dimensional torus.
Recall that for an $m$--component link $L$, the multivariable signature and nullity functions $\sigma_L,\eta_L \colon \mathbb{T}^m \to~\Z$ generalize the classical Levine-Tristram signature and nullity of a knot.
These invariants can either be defined using C-complexes or using $4$--dimensional interpretations~\cite{Cooper, Florens, Viro09, ConwayNagelToffoli, DFL18} and are known to be link concordance invariants on a certain subset $\mathbb{T}^m_! \subset \mathbb{T}^m$~\cite{NagelPowell,ConwayNagelToffoli}. 
Our first lower bound on the stabilizing number reads as follows.
\begin{customthm}{\ref{thm:S2xS2}}
\label{thm:S2xS2Intro}
If an $m$--component link $L$ is stably slice, then, for all $\omega \in \mathbb{T}^m_!$, we have $
|\sigma_L(\omega)|+|\eta_L(\omega)-m+1| \leq 2 \operatorname{sn}(L).$
\end{customthm}

As illustrated in Example~\ref{ex:6Compo}, Theorem~\ref{thm:S2xS2Intro} provides several examples in which the stabilizing number can be determined precisely.
Here, note that upper bounds on $\operatorname{sn}(L)$ can be often be computed using band pass moves; see Remark~\ref{rem:Kirby}.
The proof of Theorem~\ref{thm:S2xS2Intro} relies on the more technical statement provided by Theorem~\ref{thm:ImprovedGenusBound}. 
This latter result is a generalization of~\cite[Theorem 3.7]{ConwayNagelToffoli} which is itself a generalization of the Murasugi-Tristram inequality~\cite{Murasugi,Tristram, GilmerConfiguration,FlorensGilmer, Florens, CimasoniFlorens, Viro09, Powell}.
We state this result as it might be of independent interest, but refer to Definition~\ref{def:NullhomologousCobordism} for the definition of a nullhomologous cobordism.

\begin{customthm}{\ref{thm:ImprovedGenusBound}}
\label{thm:ImprovedGenusBoundIntro}
Let $V$ be a closed topological $4$--manifold with $H_1(V;\Z)=0$. If $\Sigma \subset (S^3 \times I) \csum V$ is a nullhomologous cobordism with $c$ double points between two $\mu$-colored links $L$ and~$L'$, then
\begin{equation*}
|\sigma_{L'}(\omega)-\sigma_{L}(\omega)+\op{sign}(V)| + |\eta_{L'}(\omega)-\eta_{L}(\omega)|-\chi(V)+2 
	\leq c-\sum_{i=1}^{\mu} \chi(\Sigma_i) 
\end{equation*}
for all $\omega\in \mathbb{T}_!^\mu$.
\end{customthm}
For $V=S^4$, Theorem~\ref{thm:ImprovedGenusBoundIntro} recovers the aforementioned (generalized) Murasugi-Tristram inequality, while the case $V=\C P^2$ is discussed in Example~\ref{ex:CP2}.

In the remainder of the introduction, we restrict to knots.
Analogously to $4$--genus computations, we are rapidly confronted to the following problem: if a knot~$K$ is algebraically slice, then its signature and nullity are trivial, and therefore the bound of Theorem~\ref{thm:S2xS2Intro} is ineffective. 
Pursuing the analogy with the $4$--genus, we obtain obstructions via the Casson-Gordon invariants.
We also briefly mention 
$L^{(2)}$-signatures in Remark~\ref{rem:HigherOrder}.

To state this obstruction, note that given a knot $K$ with $2$--fold branched cover $\Sigma_2(K)$ and a character $\chi$ on $H_1(\Sigma_2(K);\Z)$, Casson and Gordon introduced a signature invariant $\sigma(K,\chi)$ and a nullity invariant $\eta(K,\chi)$, both of which are rational numbers. We also use $\beta_K$ to denote the $\Q/\Z$--valued linking form on $H_1(\Sigma_2(K);\Z)$.
Our second obstruction for the stabilizing number is an adaptation of Gilmer's obstruction for the $4$-genus~\cite{GilmerGenus}.
\begin{customthm}{\ref{thm:CGstabilizing}}
\label{thm:CGstabilizingIntro}
If a knot $K$ bounds a locally flat disk $D$ in~$D^4 \csum n S^2 \times~S^2$, then the linking form $\beta_K$ can be written as a direct sum $\beta_1 \oplus \beta_2$ such that
\begin{enumerate}
\item $\beta_1$ has an even presentation matrix of rank $4n$ and signature $\sigma_K(-1)$;
\item there is a metabolizer $\beta_2$ such that for all characters of prime power order in this metabolizer, 
\[ |\sigma(K,\chi)+\sigma_K(-1)| \leq \eta(K,\chi)+4n+1. \]
\end{enumerate}
\end{customthm}

In Example~\ref{ex:AlgSlicesn2}, we use Theorem~\ref{thm:CGstabilizingIntro} to provide an example of an algebraically slice knot whose stabilizing number is at least two. 
Readers who are familiar with Gilmer's result might notice that Theorem~\ref{thm:CGstabilizingIntro} is weaker than the corresponding result for the $4$--genus: if $K$ bounds a genus $g$ surface in $D^4$, then Gilmer shows that $\beta_K=\beta_1 \oplus \beta_2$ where $\beta_1$ has presentation of rank $2g$, and not $4g$.

In view of Theorem~\ref{thm:S2xS2Intro} and Theorem~\ref{thm:CGstabilizingIntro}, one might wonder whether the stabilizing number is related to the $4$--genus. 
In fact, we show that the stabilizing number of an Arf invariant zero knot is always bounded above by the topological $4$-genus $g_4^{\op{top}}(K)$:
\begin{customthm}{\ref{thm:StablGenus}}
\label{thm:StablGenusIntro}
If $K$ is a knot with~$\op{Arf}(K) = 0$, then $\sn(K) \leq g_4^{\op{top}}(K)$.
\end{customthm}

The idea of the proof of Theorem~\ref{thm:StablGenusIntro} is as follows: start from a locally flat genus~$g$ surface $\Sigma \subset D^4$ with boundary $K$, use the Arf invariant zero condition to obtain a symplectic basis of curves of $\Sigma$, so that ambient surgery on half of these curves produces a disk $D$ in~$D^4 \csum g S^2 \times S^2$. 
To make this precise, one must carefully keep track of the framings and arrange that $D$ is nullhomologous.
\begin{remark} \label{rem:ArfSurfaceIntro}
During the proof of Theorem~\ref{thm:CGstabilizingIntro}, we construct a stable tangential framing of a surface~$\Sigma \subset D^4$ bounding a knot~$K$ such that $\operatorname{Arf}(K)$ can be computed from the bordism class $[\Sigma,f] \in \Omega_2^{\operatorname{fr}}\cong \Z_2$. 
	Although this result appears to be known to a larger or lesser degree~\cite{FreedmanKirby, Kirby89, Scorpan05}, we translated it from the context of characteristic surfaces as it might be of independent interest. 
\end{remark}

Motivated by the striking similarities between the bounds for $\sn(K)$ and $g_4^{\op{top}}(K)$, we provide an infinite family of knots with $4$--genus~$2$ but stabilizing number $1$; see Proposition~\ref{prop:Genus2Stab1} for precise conditions on the knots~$J_i$. 
\begin{proposition} \label{prop:Genus2stabilizingNumber1}
	If $J_1,J_2,J_3$ have large enough Levine-Tristram signature functions and vanishing nullity at all $15$--th roots of unity, then the knot $K:=R(J_1,J_2,J_3)$ described in Figure~\ref{fig:CExample} has $g_4^{\op{top}}(K)=2$ but $\sn(K)=1$.
\end{proposition}
\begin{figure}
\includegraphics{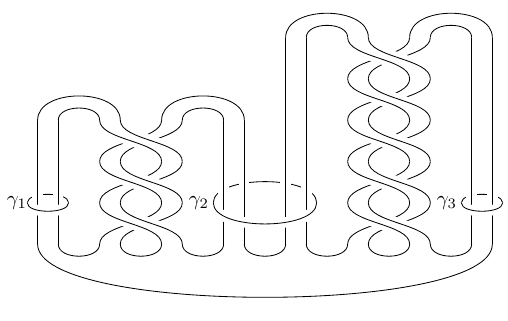}
\caption{The knot $R$ together with the curves~$\gamma_i$, which give rise to the infection $R(J_1, J_2, J_3)$.}
\label{fig:CExample}
\end{figure}

The idea of the proof of Proposition~\ref{prop:Genus2stabilizingNumber1} is as follows.
To show that $K$ has stabilizing number $1$, we use the following observation, which is stated in a greater generality in Lemma~\ref{lem:WindingPattern}:
if $K$ is obtained from a slice knot by a winding number zero satellite, then $K$ has stabilizing number~$1$. To show that $K$ has genus $2$, we use Casson-Gordon invariants: namely, we use the behavior of $\sigma(K,\chi)$ and $\eta(K,\chi)$ under satellite operations, as described in Theorem~\ref{thm:Abchir} and Proposition~\ref{prop:CGNullitySatellite}. Note that such a satellite formula for $\sigma(K,\chi)$ has appeared in~\cite{Abchir}, but unfortunately contains a mistake; see Example~\ref{ex:Motivation}.
\medbreak
We conclude this introduction with three questions.
\begin{question}\label{question:GenusLinks}
Does the inequality $\sn(L) \leq g_4^{\op{top}}(L)$ hold for stably slice links~$L$ of more than one component?
\end{question}
Question~\ref{question:GenusLinks} is settled in the knot case: Theorem~\ref{thm:StablGenusIntro} shows that $\sn(K) \leq~ g_4^{\op{top}}(K)$ holds for stably slice knots. We are currently unable to generalize this proof to links. 

\begin{question}\label{question:Smooth}
Does there exist a non-topological slice knot~$K$ such that the topological and smooth stabilizing numbers are distinct; i.e. $0 < \sn^\text{top}(K) <\sn^\text{smooth}(K)$?
\end{question}

\begin{question}\label{question:SpecificKnot}
Let $R(J,J)$ be the knot depicted in Figure~\ref{fig:KJ}.
Does the knot $K:=\#_{i=1}^3 R(J,J)$ have $\sn(K)=2$ or $\operatorname{sn}(K)=3$?
\end{question}
Using Theorem~\ref{thm:CGstabilizingIntro}, we can show that $\operatorname{sn}(K) \geq 2$. 
Using Casson-Gordon invariants, one can show that $g_4^{\op{top}}(K)=3$ and therefore Theorem~\ref{thm:StablGenusIntro} implies that $\operatorname{sn}(K) \in \lbrace 2,3 \rbrace$. 
We are currently not able to decide on the value of $\operatorname{sn}(K)$.
\medbreak
This article is organized as follows. 
In Section~\ref{sec:StablySliceLinks}, we define the stabilizing number~$\sn(L)$ and investigate its relation to band pass moves and winding number zero satellite operations.
In Section~\ref{sec:Proof}, we review the multivariable signature and nullity and prove Theorem~\ref{thm:S2xS2Intro}.
In Section~\ref{sec:CG}, we recall the definitions of the Casson-Gordon invariants and prove Theorem~\ref{thm:CGstabilizingIntro}.
In Section~\ref{sec:4Genus}, we discuss various definitions of the Arf invariant and prove Theorem~\ref{thm:StablGenusIntro}.
Finally, Appendix~\ref{Appendix} provides satellite formulas for the Casson-Gordon signature and nullity invariants.

\subsection*{Acknowledgments}
We thank Mark Powell for providing the impetus for this project and for several extremely enlightening conversations. 
We are also indebted to Peter Feller for asking us about the relationship between $g_4^{\op{top}}(K)$ and $\operatorname{sn}(K)$.
We also thank an anonymous referee for many helpful comments.
AC thanks Durham University for its hospitality and was supported by an early Postdoc.Mobility fellowship funded by the Swiss FNS. MN thanks the Université de Genève for its hospitality. MN was supported by the ERC grant 674978 of the European Union's Horizon 2020 program.

\section{Stably slice links} 
\label{sec:StablySliceLinks}
In this subsection, we define the notion of a stably slice link. After discussing the definition, we give some examples and recall a result of Schneiderman which gives a necessary and sufficient condition for a link to be stably slice.
\medbreak

Let $W$ be a $4$--manifold with boundary~$\partial W \cong S^3$. We say that a properly embedded disk~$\Delta$ is \emph{nullhomologous}, if its fundamental class~$[\Delta, \partial \Delta] \in H_2(W, \partial W; \Z)$ vanishes. By Poincaré duality, $\Delta$ is nullhomologous if and only if $\Delta \cdot \alpha=0$ for all~$\alpha \in H_2(W; \Z)$, where $\cdot$ denotes algebraic intersections.

The next definition introduces the main notions of this article.

\begin{definition}\label{def:StablySlice}
A link $L \subset S^3$ is \emph{stably slice} if there exists $n\geq 0$ such that the components of~$L$ bound a collection of disjoint locally flat nullhomologous disks in the manifold~$D^4 \csum n S^2 \times S^2$. The \emph{stabilizing number} $\sn(L)$ of a stably slice link is the minimal such $n$.
\end{definition}

\begin{remark}\label{rem:EquivalentDef}
To put Definition~\ref{def:StablySlice} in the setting of the article~\cite{Schneiderman}, observe that it can can be rephrased as follows:
a knot is stably slice if and only if it bounds a smoothly \emph{immersed} 
disk~$D$ in~$D^4$ that can be homotoped to a locally flat embedding in~$D^4 \csum n S^2 \times S^2$. 
In one direction, if $K$ bounds such a disk in $D^4$, then the resulting embedded disk in~$D^4 \csum n S^2 \times S^2$ is automatically nullhomologous, and therefore~$K$ is stably slice. Conversely, we assume~$K$ is stably slice, and produce the required disk~$D \subset D^4.$
Since the pair~$W = D^4 \csum n S^2 \times S^2$ and $\partial W = S^3$ is $1$--connected and $\pi_1(\partial W) =~0$, deduce that $\pi_2(W,\partial W) \to H_2(W, \partial W;\Z)$ is an isomorphism by the relative Hurewicz theorem. 
Consequently, any nullhomologous disk~$\Delta$ can be homotoped into~$D^4$,
while fixing its boundary~$K$.
 By the Whitney immersion theorem, arrange the resulting map~$D^2 \rightarrow D^4$ to be a smooth proper immersion; see e.g.~\cite[Section~7.1]{Ranicki02}.
\end{remark}

Using Norman's trick, any knot $K \subset S^3$ bounds a locally flat embedded disk in~$D^4 \csum S^2\times S^2$~\cite[Theorem 1]{Norman69}; this explains why we restrict our attention to \emph{nullhomologous} disks.

The next lemma reviews Norman's construction in the case of links.
\begin{lemma}[Norman's trick]\label{lem:Norman}
Any $m$--component link $L \subset S^3$ bounds a disjoint union of locally flat disks in $D^4 \csum mS^2 \times S^2$.

Furthermore, for every $k \in \Z$ and in the case of a knot~$L = K$, such a disk can be arranged to represent the class $[ \lbrace \pt \rbrace \times S^2 ] + k [ S^2 \times \lbrace \pt \rbrace ]$.
\end{lemma}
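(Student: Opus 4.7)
The plan is to prove the knot case first via a Norman-style tubing construction, and then extend to links using one $S^2\times S^2$ summand per component together with general position.

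For the knot case, I will stabilise once to form $M := D^4 \csum S^2\times S^2$ and denote the standard sphere generators by $A := [S^2 \times \{\pt\}]$ and $B := [\{\pt\} \times S^2]$, which satisfy $A\cdot A = B\cdot B = 0$ and $A\cdot B = 1$. Performing the connect sum inside a $4$--ball containing the unique transverse intersection $A\cap B$ makes the punctured sphere $B_0 := B \setminus B^4$ an embedded locally flat disc in $M$, disjoint from $A$, with boundary an unknot $U$ in the gluing $3$--sphere. I then construct an immersed annulus $A' \subset D^4 \setminus B^4 \cong S^3 \times I$ from $K \subset \partial D^4$ to $U$: this exists because $K$ can be transformed into the unknot by finitely many crossing changes, each of which is realised by a single transverse self-intersection of the annulus. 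Gluing $A'$ to $B_0$ along $U$ produces an immersed disc $D \subset M$ bounding $K$, with relative homology class $[D]=[B]$ and with $A$ meeting $D$ transversely in exactly one point.

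Now I apply Norman's trick: at each self-intersection $p$ of $D$, take an arc on $D$ from $p$ to the unique point of $A\cap D$ and tube one of the local sheets at $p$ around a parallel copy of $A$. Parallel copies are disjoint because $A\cdot A=0$, so each tubing eliminates one self-intersection at the expense of adding $\pm[A]$ to the homology class. After resolving all double points the result is an embedded locally flat disc $D'$ bounding $K$ with $[D'] = [B] + k_0[A]$ for some $k_0\in\Z$. To attain an arbitrary prescribed $k\in\Z$, I perform further internal connect sums of $D'$ with parallel copies of $A$ at transverse intersection points (which persist because $[D']\cdot[A]=1$); each such move alters the class by $\pm[A]$ (sign controlled by orientation) while preserving embeddedness and local flatness.

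For an $m$--component link $L = K_1 \cup \cdots \cup K_m$, I stabilise with $m$ copies of $S^2\times S^2$ and run the knot construction for each $K_i$ in its own summand, producing immersed discs $D_i$ with $[D_i]=[B_i]$ and $|A_i \cap D_i|=1$. Since $A_i\cdot B_j = 0$ for $j\neq i$, the sphere $A_i$ can be arranged disjoint from $D_j$. By general position, distinct $D_i,D_j$ meet transversely in finitely many points, each of which I resolve by tubing $D_i$ with a parallel copy of $A_i$; the remaining self-intersections are resolved as in the knot case. This yields pairwise disjoint embedded locally flat discs bounding the components of $L$. The main obstacle is carrying out all these tubings in the locally flat category while tracking the homology classes precisely enough to attain $[B]+k[A]$ with arbitrary $k$ in the knot case.
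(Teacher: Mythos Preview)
Your approach is essentially Norman's trick, as in the paper: give the immersed disk a geometric dual sphere and then tube away double points using parallel push-offs of that dual. There is, however, a slip in your setup: if the connect-sum $4$--ball contains the point $A\cap B$, then \emph{both} spheres get punctured, so $A$ is no longer a closed sphere in $M$, and the claim that ``$A$ meets $D$ transversely in exactly one point'' is inconsistent with your earlier assertion that $B_0$ is disjoint from $A$ (since $D=A'\cup B_0$ and the annulus $A'$ lives in the $D^4$ side). The cleanest fix is to place the connect-sum ball on $B$ but away from $A\cap B$; then $A$ survives as an embedded sphere and $A\cap B_0$ is the single original intersection point. The paper sidesteps this by taking the connect sum at a point disjoint from both sphere factors and then tubing the immersed disk in $D^4$ into $\{\pt\}\times S^2$.

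The one genuine difference from the paper is how you attain the prescribed class $[B]+k[A]$. The paper arranges the signed self-intersection number of the initial immersed disk to equal $k$ \emph{before} resolving, so that tubing away all double points yields exactly the desired class in one pass. Your post-hoc adjustment---tubing the already-embedded disk into further parallel copies of $A$---also works, since each such parallel copy meets the current disk in a single point (the copy is disjoint from the previously added $A$-tubes because $A\cdot A=0$, disjoint from the $D^4$ part, and meets $B_0$ once); this is a valid alternative.
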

\begin{proof}
Pick locally flat immersed disks~$\Delta = \Delta_1 \cup \cdots \cup \Delta_m$ in~$D^4$ with boundary the link~$L = K_1 \cup \cdots \cup~K_m$, and which only intersect 
in transverse double points.
For each disk~$\Delta_i$, connect sum~$D^4$ with a separated copy of~$S^2 \times S^2$. 
Now connect sum each disk~$\Delta_i$ into the~$\lbrace \pt \rbrace \times S^2$ of its corresponding~$S^2 \times S^2$ summand. 
This way, each disk~$\Delta_i$ has a dual sphere~$S^2 \times \lbrace \pt \rbrace$, that is a sphere that intersects~$\Delta_i$ geometrically in exactly one point and no other disks.
As a consequence, a meridian of~$\Delta_i$ bounds the punctured~$S^2 \times \lbrace \pt \rbrace$, which is disjoint from~$\Delta$. 
The dual sphere~$S^2 \times \lbrace \pt \rbrace$ has a trivial normal bundle, so we can use push-offs of it. 
This implies that a finite collection of meridians of~$\Delta$ will bound disjoint disks in the complement of $\Delta$. 

At each double point~$x$ of $\Delta$ pick one of the two sheets. 
This sheet will intersect a tubular neighborhood of the other sheet in a disk~$D_x$, whose boundary is the meridian~$\mu_x$.
For the collection of meridians~$\{\mu_x\}$, find disjoint disks~$D'_x$ as above, namely by tubing into a parallel push-off of $S^2 \times \lbrace \pt \rbrace$.
Replace~$D_x$ with $D_x'$ to remove all immersion points, and call the resulting disjointly embedded disks~$\Delta'_i$.

For the second part of the lemma, pick the immersed disk~$\Delta$ for $K$ to have self-intersection points whose signs add up to $k$ by adding trivial local cusps; see e.g.~\cite[Figure 2.4]{Scorpan05}.
Tubing $\Delta$ into the dual sphere as above results in a disk~$\Delta'$, which represents the class
$[\lbrace \pt \rbrace \times S^2] + k[S^2 \times \lbrace \pt \rbrace]$.
\end{proof}

\begin{figure}[!htb]
\begin{tikzpicture}
[   x={(0:2cm)},
    y={(90:0.5cm)},
    z={(-100:1cm)}
]
\tikzset{->-/.style={decoration={markings,mark=at position 0.2 with
    {\arrow{>}}},postaction={decorate}}}
\tikzset{-<-/.style={decoration={markings,mark=at position 0.2 with
    {\arrow{<}}},postaction={decorate}}}
\begin{scope}[shift={(-1.5,0)}]
\node[left] at (-1,0.5,-1) {$K$};
\draw[->-] (1,0,1) -- (-1,0,-1);
\draw[-<-] (1,1,1) -- (-1,1,-1);

\draw[line width=5pt, white] (-1,0,1) -- (1,0,-1);
\draw[line width=5pt, white] (-1,1,1) -- (1,1,-1);
\node[left] at (-1,0.5,1) {$K'$};
\draw[->-] (-1,0,1) -- (1,0,-1);
\draw[-<-] (-1,1,1) -- (1,1,-1);
\end{scope}
\begin{scope}[shift={(1.5,0)}]
\draw[->-] (-1,0,1) -- (1,0,-1);
\draw[-<-] (-1,1,1) -- (1,1,-1);
\draw[line width=5pt, white] (1,0,1) -- (-1,0,-1);
\draw[line width=5pt, white] (1,1,1) -- (-1,1,-1);
\draw[->-] (1,0,1) -- (-1,0,-1);
\draw[-<-] (1,1,1) -- (-1,1,-1);
\end{scope}
\end{tikzpicture}
\caption{A band pass move. Two strands that bound a band belong to the same link components~$K$ and $K'$.}\label{fig:BandPass}
\end{figure}
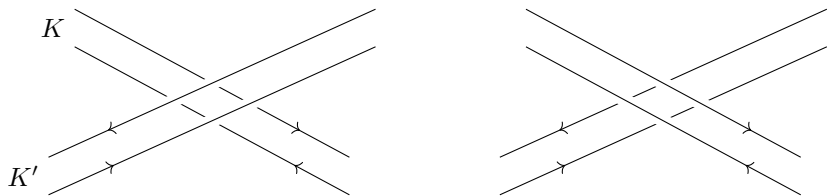

Our next goal is to provide examples of stably slice links using band pass moves. Recall that a \emph{band pass} is the local move on a link diagram depicted in Figure~\ref{fig:BandPass}. The associated equivalence relation on links has been studied by Martin, and it agrees with $0$--solve equivalence~\cite[Section~4]{Martin15}. 

We wish to show that if a link is band pass equivalent to 
a strongly slice link, then it is stably slice. To achieve this, we introduce a relative version of stable sliceness.
\begin{definition}\label{def:StableConcordance}
Let $L = K_1 \cup \cdots \cup K_m$ and $L' = K_1'\cup \cdots\cup K_m' \subset S^3$ be two links. An \emph{$n$--stable concordance} is a collection of disjointly locally flat annuli~$\Sigma_1, \ldots, \Sigma_m \subset S^3 \times I \csum n S^2 \times S^2 =: W$ such that
\begin{enumerate}
\item the knots $K_i$ and $K_i'$ cobound the surface $\Sigma_i$, that is
\[ \partial \Sigma_i = -K_i \sqcup K_i' \subset -(S^3 \times\{-1\}) \sqcup (S^3 \times \{1\});\]
\item the fundamental class~$[\Sigma_i,\partial \Sigma_i] \in H_2(W, \partial W;\Z)$ vanishes.
\end{enumerate}
\end{definition} 

In a nutshell, a stable concordance is a concordance in~$S^3 \times I \csum n S^2 \times S^2$ that intersects zero algebraically the $2$-spheres of $n S^2 \times S^2$. 
The next lemma establishes a relation between stable concordance and stable sliceness.
\begin{lemma}\label{lem:StablyConcordantUnlinkStablySlice}
Let $\Sigma$ be an $n$--stable concordance between links~$L$ and~$L'$. Let $\Delta$ be a collection of nullhomologous slice disks
in~$D^4 \csum n' S^2 \times S^2$ for~$L'$. Then $\Sigma \cup \Delta$ forms a collection of nullhomologous slice disk for~$L$ in 
\[ \big( S^3 \times I \csum nS^2 \times S^2 \big) \cup_{S^3 \times \lbrace 1 \rbrace} \big( D^4 \csum n' S^2 \times S^2 \big) = D^4 \csum (n+n') S^2 \times S^2.\]
In particular, if $L$ is stably concordant to 
a strongly slice link,
then $L$ is stably slice.
\end{lemma}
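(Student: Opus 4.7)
The plan is to glue the stable concordance $\Sigma$ to the slice disks $\Delta$ along the intermediate link $L'$ and check that the resulting collection of disks is properly embedded and nullhomologous in the correct ambient manifold. I would break this into three steps: identifying the ambient manifold, checking the gluing yields disks, and verifying the homological condition.

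For the first step, $S^3 \times I$ with one of its boundary components capped off by $D^4$ is diffeomorphic to $D^4$; performing the two collections of connected sums in the respective interiors (chosen disjoint from the gluing collar) therefore identifies the union $(S^3 \times I \csum n S^2 \times S^2) \cup_{S^3 \times \{1\}} (D^4 \csum n' S^2 \times S^2)$ with $D^4 \csum (n+n') S^2 \times S^2$, with boundary $S^3 \times \{-1\}$. The second step is essentially immediate: each $\Sigma_i$ is an annulus with boundary $-K_i \sqcup K_i'$ and each $\Delta_i$ is a disk with boundary $K_i'$, so the gluing $\Sigma_i \cup_{K_i'} \Delta_i$ is a disk with boundary $K_i$, and the resulting $m$ disks remain pairwise disjoint.

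For the homological step, the key tool is the criterion recalled just before Definition~\ref{def:StablySlice}: a properly embedded disk $E \subset W$ is nullhomologous if and only if $E \cdot \alpha = 0$ for all $\alpha \in H_2(W; \Z)$. Here $H_2(W; \Z) \cong \Z^{2(n+n')}$ is generated by the standard $S^2 \times \{\pt\}$ and $\{\pt\} \times S^2$ classes from each of the $n+n'$ summands, and each such generator can be represented by a sphere lying entirely inside a single stabilization summand, and hence in exactly one of the two pieces of the decomposition. I would then check $(\Sigma_i \cup \Delta_i) \cdot \alpha = 0$ by splitting into cases: if $\alpha$ lies in the concordance side, then $\alpha$ is automatically disjoint from $\Delta_i$ while $\alpha \cdot \Sigma_i = 0$ follows from the vanishing of $[\Sigma_i, \partial \Sigma_i]$; the reverse case is symmetric, using nullhomology of $\Delta_i$ together with disjointness from $\Sigma_i$.

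The main subtlety I anticipate is translating the vanishing of the relative class $[\Sigma_i, \partial \Sigma_i] \in H_2(S^3 \times I \csum n S^2 \times S^2, S^3 \times I \setminus \nu D^4; \Z)$ into the geometric statement that $\Sigma_i \cdot \alpha = 0$ for the chosen sphere representatives $\alpha$ of generators supported in the stabilization region. This amounts to checking that each such $\alpha$ can be homotoped into the open subset $\Int(\nu D^4) \csum n S^2 \times S^2$ and then invoking the Poincaré--Lefschetz intersection pairing between this subset and the relevant relative homology group. With this in hand, the "in particular" clause follows by taking $n' = 0$: a strongly slice $L'$ by definition bounds disjoint slice disks in $D^4 = D^4 \csum 0 \cdot S^2 \times S^2$, and such disks are trivially nullhomologous since $H_2(D^4; \Z) = 0$.
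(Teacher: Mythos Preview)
Your proposal is correct but takes a genuinely different route from the paper. The paper argues purely homologically: it sets $X' = S^3 \times I \setminus \nu B^4$, writes down the long exact sequence of the triple $\partial W \subset X' \subset W$, and uses excision along the separating $S^3 \times \{1\}$ to identify $H_2(W, X'; \Z) \cong H_2(X, X'; \Z) \oplus H_2(Y, \partial Y; \Z)$. The class $[\Sigma_i \cup \Delta_i]$ maps to $[\Sigma_i, \partial \Sigma_i] + [\Delta_i, \partial \Delta_i]$, which vanishes by hypothesis, and exactness forces it to come from $H_2(X', \partial W; \Z) = H_2(S^3 \times I, S^3 \times \{-1\}; \Z) = 0$.

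Your approach instead dualizes: you invoke the intersection criterion for nullhomology, represent each generator of $H_2(W; \Z)$ by a sphere supported in a single $S^2 \times S^2$ summand (hence on one side of the decomposition), and check intersection numbers side by side. This is more geometric and arguably closer to how one visualizes the situation; the paper's argument is cleaner bookkeeping and avoids having to justify the ``subtlety'' you flag. That subtlety is real but mild: once $\alpha$ is pushed into $\Int(\nu D^4) \csum n S^2 \times S^2$, the vanishing of $[\Sigma_i, \partial \Sigma_i]$ in $H_2(X, X'; \Z)$ gives a $3$--chain $c$ in $X$ with $\partial c \equiv \Sigma_i$ modulo $X'$, and since $\alpha$ is disjoint from $X'$ and closed, $\Sigma_i \cdot \alpha = (\partial c) \cdot \alpha = 0$. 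Both arguments are short; the paper's has the advantage of never needing to name generators of $H_2(W; \Z)$ or move representatives around.
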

\begin{proof}
Since $\Sigma \cup \Delta$ consists of disjoint disks, we need only verify that each disk~$\Sigma_i \cup~\Delta_i$ is nullhomologous.
Set $Z:=S^3 \times I \csum n S^2 \times S^2$ and $X :=D^4 \csum n' S^2 \times S^2$, and define $W:= Z \cup_{S^3 \times \lbrace 1 \rbrace} X$.
Consider the following portion of the long exact sequence 
of the triple~$\partial W \subset \partial Z \subset~W$:
\begin{equation}\label{eq:NullhomologousAdditiv}
	\begin{tikzcd}[column sep = 0.5cm]
            \to H_2(\partial Z, \partial W; \Z) \ar[r] & H_2(W, \partial W; \Z) \ar[r] \ar[dr, dashed] & H_2(W, \partial Z; \Z) \ar[d,"\op{exc}"] \to\\
                                                       &&H_2(Z, \partial Z;\Z) \oplus H_2(X, \partial X;\Z).
\end{tikzcd} 
\end{equation}
The vertical arrow is an excision isomorphism: we thicken~$S^3 \times \{1\}$ and remove its interior. 
This splits~$W$ into the disjoint union of~$Z$ and~$X$.
Consequently, the dashed arrow sends the class~$[\Sigma_i \cup \Delta_i,\partial (\Sigma_i \cup \Delta_i)]$ to~$[\Sigma_i,\partial \Sigma_i] + [\Delta_i,\partial \Delta_i]$.
By assumption, both of these summands are zero.

The dashed arrow is injective, since~$H_2(\partial Z, \partial W; \Z) \cong H_2(S^3; \Z) = 0$.
Deduce that~$[\Sigma_i \cup \Delta_i,\partial (\Sigma_i \cup \Delta_i)] = 0$, and thus each disk~$\Sigma_i \cup \Delta_i$ is nullhomologous, as~desired.
\end{proof}

Next, we show how band pass moves give rise to stable concordances.
\begin{remark}
\label{rem:Kirby}
If an oriented link $L'$ is obtained from an oriented link $L$ by a single band pass move, then~$L'$ and $L$ are $1$-stable concordant.
Indeed, the annuli are obtained as the trace of the isotopy near the $2$--handles of $S^3 \times I \csum S^2 \times S^2$ depicted in Figure~\ref{fig:BandConcordanceKirby};
these annuli are nullhomologous provided we move a pair of strands with \emph{opposite} orientations over the $2$--handles.
\begin{figure}[!htb]
\includegraphics{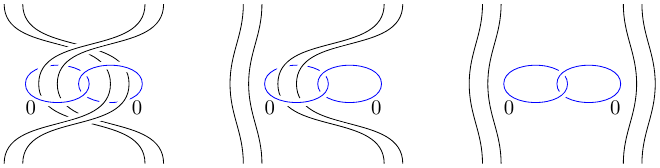}
\caption{A cylinder in $S^3 \times I \csum  S^2 \times S^2$ depicted in various
cross sections.}\label{fig:BandConcordanceKirby}
\end{figure}
\end{remark}

Next, we use band pass moves to provide examples of stably slice links.
\begin{example}\label{ex:BingHopf}
\begin{figure}[!htb]
\includegraphics{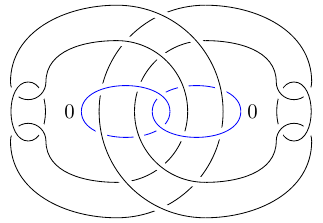}
\caption{Bing double of the Hopf link with $0$--handles.}
\label{fig:BingHopf}
\end{figure}
Consider the $4$--component link~$L$ depicted in Figure~\ref{fig:BingHopf}, which is the Bing double of the Hopf link. We claim that the stabilizing number of $L$ is $1$. 
Sliding the link over the depicted $2$--handles (or doing the associated band pass), we deduce that~$L$ is slice in $D^4 \csum S^2 \times S^2$. 
The Milnor invariant~$|\mu_{1234}(L)|=1$ shows that $L$ is not slice in $D^4$; here we used the behavior of the Milnor invariants under Bing doubling~\cite[Theorem 8.1]{CochranDerivatives} and the fact that the Milnor invariants are concordance invariants~\cite{Casson}.
Therefore $\sn(L)=1$, as claimed.
\end{example}

In Example~\ref{ex:BingHopf}, we converted crossing changes into band passes via Bing doubling.
In order to generalize from Bing doubles to arbitrary winding number~$0$ satellite operations,
we first recall the set-up for satellites.
Let $R \subset S^3$ be a knot together with an \emph{infection curve}~$\gamma \subset S^3 \sm R$, which is an unknot~$\gamma \subset S^3$ and an integer~$k \in \Z$.
We identify $\nu (\gamma) = S^1 \times D^2$ via the Seifert framing on $\gamma$ so that the exterior~$S^3 \sm \nu (\gamma)$ has an evident product structure~$S^1\times D^2$, and we may consider~$R$ as a knot in~$S^1 \times D^2$.
Now suppose we are given another knot~$J \subset S^3$. 
Fix the trivialization~$\nu (J) \cong J \times D^2$ that corresponds to the integer~$k$ in the Seifert framing, i.e. the framing which maps $J \times \{1\}$ to the curve~$k \mu_J + \lambda_J$, where $\lambda_J$ is the Seifert longitude.
Keeping this identification in mind, the \emph{satellite}~$R(J;\gamma, k)$ is the knot 
\[ R \subset S^1 \times D^2 = \nu (J) \subset S^3.\] 
The exterior of $R(J;\gamma,k)$
is $X_{R(J; \gamma, k)} = X_R \setminus \nu(\gamma) \cup X_J$, where we identify the boundary tori via
$\mu_{\gamma} \mapsto k\mu_J + \lambda_J$ and $\lambda_{\gamma} \mapsto \mu_J$.
We say $R \subset S^1 \times D^2$ is the \emph{pattern}, and the knot~$J$ is the \emph{companion}.  
The linking number~$\lk(R, \gamma)$, which coincides with the (algebraic) intersection number~$R \cdot (\{\pt\} \times D^2)$, is called the \emph{winding number}. 
We also write $R(J; \gamma)$ instead of $R(J; \gamma,0)$.
\begin{figure}
	\includegraphics{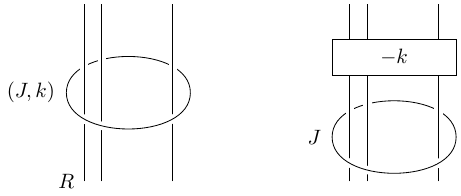}
	\caption{Twice the knot $R(J; \gamma, k)$; on the right as an infection $R'(J; \gamma)$, where $R'$ is obtained from $R$ by adding $k$ full left-handed twists. }
	\label{fig:infection}
\end{figure}

The next lemma describes the effect of winding number $0$ satellite operations on stable concordance. 
\begin{lemma}\label{lem:WindingPattern}
Let $R$ be a winding number~$0$ pattern, $J$ a knot, and $k \in \Z$.
Then the satellite~$R(J; \gamma, 2k)$ is $1$--stable concordant to~$R = R(U; \gamma)$.
\end{lemma}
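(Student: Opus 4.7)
The plan is to realize the satellite $R(J;\eta,2k)$ as the image of $R$ inside a tubular neighborhood of a concordance from $J$ to the unknot. First I will construct an embedded concordance $A$ from $J$ to the unknot $U$ in $S^3 \times I \csum S^2 \times S^2$ whose normal bundle admits a trivialization restricting to the $2k$-framing at $J$ and the Seifert framing at $U$. Start with any immersed concordance $A_0$ from $J$ to $U$ in $S^3 \times I$ (for instance, obtained by puncturing an immersed disk for $J$ in $D^4$ at an interior point), choose the signs of its self-intersections to have algebraic sum $k$, and then apply Norman's trick exactly as in Lemma~\ref{lem:Norman}: connect-sum with a single $S^2 \times S^2$ and resolve every double point by tubing into $\{\pt\} \times S^2$ via parallel push-offs of the dual sphere $S^2 \times \{\pt\}$. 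The resulting embedded annulus $A$ represents the relative class $[\{\pt\} \times S^2] + k\,[S^2 \times \{\pt\}]$, so in particular $A \cdot A = 2k$; this matches the framing mismatch between the two boundary components and guarantees existence of the desired trivialization of $\nu(A)$.

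Next I use this trivialization to identify $\nu(A) \cong (S^1 \times D^2) \times I$, so that at the $J$-end the factor $S^1 \times D^2$ is identified with $\nu(J) \subset S^3$ via the $2k$-framing (matching the convention $\mu_\eta \mapsto 2k\mu_J + \lambda_J$, $\lambda_\eta \mapsto \mu_J$), and at the $U$-end with $\nu(U) \subset S^3$ via the Seifert framing. Placing the pattern $R \subset S^1 \times D^2$ inside $\nu(A)$ yields an embedded annulus $\Sigma := R \times I$ in $S^3 \times I \csum S^2 \times S^2$ whose boundary is $-R(J;\eta,2k) \sqcup R(U;\eta)$, as required by Definition~\ref{def:StableConcordance}.

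The crux is verifying that $\Sigma$ is nullhomologous, which is where the winding-number-zero hypothesis enters. The group $H_2(S^3 \times I \csum S^2 \times S^2; \Z)$ is generated by $[S^2 \times \{\pt\}]$ and $[\{\pt\} \times S^2]$, and each of these spheres meets $\Sigma$ only inside $\nu(A)$, in neighborhoods of the points where it meets $A$. Near such a point the sphere cuts $\nu(A)$ in a single fibre $\{\pt\} \times D^2 \subset S^1 \times D^2$, and the local intersection count is precisely the algebraic intersection $R \cap (\{\pt\} \times D^2) = \lk(R,\eta)$, which vanishes by hypothesis. Hence $\Sigma \cdot [S^2 \times \{\pt\}] = \Sigma \cdot [\{\pt\} \times S^2] = 0$, and $\Sigma$ is nullhomologous in the required sense.

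The main technical nuisance is framing bookkeeping: one must check that the trivialization of $\nu(A)$ produced by Norman's construction realizes precisely the satellite framing convention on the $J$-side. Relatedly, the evenness of $2k$ is essential — any relative class of a surface in $D^4 \csum S^2 \times S^2$ with boundary in $S^3$ is of the form $a[\{\pt\}\times S^2]+b[S^2\times\{\pt\}]$ and hence has self-intersection $2ab \in 2\Z$, so this one-stabilization argument could not accommodate an odd framing shift.
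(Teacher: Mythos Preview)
Your proof is correct and follows essentially the same approach as the paper. Both construct an embedded annulus $A$ from $J$ to $U$ in $S^3\times I\,\#\,S^2\times S^2$ representing the class $[\{\pt\}\times S^2]+k[S^2\times\{\pt\}]$ (you by resolving an immersed annulus via Norman's trick, the paper by puncturing the disk produced in Lemma~\ref{lem:Norman}), use the self-intersection $2k$ to obtain the correct framing of $\nu(A)$, and then push the pattern $R\times I$ into $\nu(A)$. The only cosmetic difference is the nullhomology verification: you compute intersection numbers with the generating spheres and invoke winding number zero, while the paper observes more directly that $[R\times I,\partial(R\times I)]$ already vanishes in $H_2\big(S^1\times D^2\times I,\,S^1\times D^2\times\{\pm1\};\Z\big)$ for the same reason.
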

\begin{proof}
Use the last assertion of Lemma~\ref{lem:Norman} to pick a locally flat disk $D$ in $D^4 \csum S^2 \times~S^2$ bounding the companion knot~$J$, and which represents the homology class~$x:=k [S^2 \times \lbrace \pt \rbrace] + [\lbrace \pt \rbrace \times S^2]$. 
While we require that $D$ is embedded, it need not be nullhomologous.
Note that~$J$ is the core~$S^1 \times \{0\}$ of the solid torus~$S^1 \times D^2 =~\nu (J)$.  
Next, write~$i_R \colon R \rightarrow S^1 \times D^2 =~\nu (J)$ for the inclusion of the pattern knot~$R$. 
Remove a small $4$-ball from the interior of $D^4$ in such a way that the disk~$D \subset D^4 \csum S^2 \times S^2$ becomes an annulus~$A \subset (S^3 \times I) \csum S^2 \times S^2 =: W$ with boundary the disjoint union of~$J$ and the unknot~$U$.
Under the isomorphism~$H_2(D^4 \csum S^2 \times S^2, S^3; \Z) \cong H_2(S^2 \times S^2; \Z)$, the class~$x$ has intersection product~$x \cdot x=2k$.
Since $D$ is embedded and~$x \cdot x =2k$, the (relative) Euler number of $D$ is $2k$.
Thus, the unique trivialization~$\nu (D)\cong D\times D^2$ induces the $2k$--framing on the knot~$J$.
Consequently, the annulus~$A$ admits a framing~$n_A \colon A \times D^2 \xrightarrow{\sim} \nu A$ that restricts to the $2k$--framing on~$J$ and to the $0$--framing on~$U$.

Now consider the annulus~$\Sigma$ given by 
\[ \Sigma = R \times I \xrightarrow{i_R \times \id_I} (S^1 \times D^2) \times I \xrightarrow{\text{flip}} (S^1 \times I) \times D^2 = A \times D^2 \xrightarrow{n_A} \nu A \subset W.\] 
Observe that $R \times \{-1\} = R(J; \gamma, 2k) \subset S^3 \times \{-1\}$ and $R \times \{1\} = R(U; \gamma) \subset S^3 \times \{1\}$. 
It only remains to check that $\Sigma$ is nullhomologous, i.e. that $[R \times I,\partial (R \times I)]$ vanishes in $H_2(W, \partial W ; \Z)$. This follows from the fact that the class 
\[ [R \times I,\partial (R \times I)] \in H_2\big( S^1 \times D^2 \times I, S^1 \times D^2 \times \{\pm 1\}; \Z\big)\] 
already vanishes, since~$R$ has winding number~$0$.
\end{proof}

Having provided some examples and constructions of stably slice links, we conclude this subsection by recalling Schneiderman's characterization of stably slice links~\cite{Schneiderman}; see also~\cite[Theorem 1.1]{Martin15}.
\begin{theorem}[Schneiderman]\label{thm:Schneiderman}
For a link $L \subset S^3$, the following are equivalent:
\begin{enumerate}
\item the link~$L$ is stably slice;
\item the following invariants vanish: the pairwise linking numbers of $L$, the triple linking numbers $\mu_{ijk}(L)$, the mod~$2$ Sato-Levine invariants of~$L$, and the Arf invariants of the components of~$L$.
\end{enumerate}
\end{theorem}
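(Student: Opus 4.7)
My plan would be to treat the two implications independently. The forward direction $(1) \Rightarrow (2)$ is an obstruction-theoretic calculation, while the converse $(2) \Rightarrow (1)$ requires a geometric construction and is the substantive content.

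For $(1) \Rightarrow (2)$, I would verify that each listed invariant vanishes whenever $L$ bounds disjoint nullhomologous locally flat disks $\Delta = \Delta_1 \sqcup \cdots \sqcup \Delta_m$ in $W := D^4 \csum n S^2 \times S^2$. The key observation is that $W$ is simply connected and each $\Delta_i$ is nullhomologous, so the meridians of $\Delta$ generate $H_1(W \sm \nu \Delta)$ freely; consequently, by Stallings' theorem, the lower central series of the link group $\pi_1(S^3 \sm L)$ maps through the range detected by $\mu_{ijk}$ and the mod-$2$ Sato--Levine invariants onto those of a free group on $m$ generators, forcing these Milnor-type invariants to vanish. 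For the Arf invariants of the components, one argues one component at a time: the nullhomologous disk $\Delta_i$ realises $K_i$ as topologically slice in a simply-connected spin $4$-manifold whose intersection form is a direct sum of hyperbolic planes, and under Robertello's identification of $\operatorname{Arf}$ with a Rokhlin-type invariant this forces $\operatorname{Arf}(K_i) = 0$.

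For $(2) \Rightarrow (1)$, I would follow Schneiderman's Whitney-tower strategy. Starting from pairwise vanishing linking numbers, each component $K_i$ bounds an immersed disk in $D^4$, and the intersections between these disks can be organised into families paired by immersed Whitney disks. The vanishing of the triple linking numbers $\mu_{ijk}(L)$ is what allows such Whitney disks for mixed intersections to be constructed; the mod-$2$ Sato--Levine invariants play the analogous role for self-intersections of first-layer disks paired against a second layer; the Arf invariants of the components resolve the last layer, by the same symplectic-basis surgery trick as in Theorem~\ref{thm:StablGenusIntro}. Once the obstructions are assembled into a Whitney tower with only transverse double points remaining, every remaining intersection is removed by a single local $S^2 \times S^2$ stabilization using the rerouting model of Construction~\ref{const:BandPass}: the two sheets are pushed through the dual spheres with cancelling signs, which preserves the nullhomology class of each disk.

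The main obstacle is the precise match between the algebraic invariants listed in $(2)$ and the geometric obstructions at each layer of the Whitney tower. In particular, realising the vanishing of the mod-$2$ Sato--Levine invariant as the exact obstruction to the second stage of Whitney disks requires genuine care: this is the substantive content of \cite[Theorem~1, Corollary~2]{Schneiderman}, and I would not attempt to recapitulate the argument in detail here but rather invoke it as a black box.
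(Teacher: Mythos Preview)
The paper's proof of this statement is entirely by citation: it verifies that the paper's definition of stable sliceness matches Schneiderman's, then quotes \cite[Corollary~2]{Schneiderman} for the equivalence with $\tau_1(L)=0$, and finally \cite[Theorem~1]{Schneiderman} together with \cite[Theorem~1.1]{CST} for the translation of $\tau_1(L)=0$ into the listed classical invariants. No direct argument is given for either implication.

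Your proposal is more ambitious, and in the end you also invoke Schneiderman as a black box for $(2)\Rightarrow(1)$, so on the hard direction the two proofs coincide. The difference lies in $(1)\Rightarrow(2)$, where you attempt a self-contained argument. Two cautions there. First, the Stallings step as you phrase it does not go through directly: the inclusion $X_L\hookrightarrow W_\Delta$ is an isomorphism on $H_1$ but is \emph{not} surjective on $H_2$, since $H_2(W_\Delta;\Z)$ carries the $2n$ classes coming from the $S^2\times S^2$ summands. One must either observe that these classes are spherical and hence die in $H_2(\pi_1(W_\Delta);\Z)$ before invoking Stallings at the level of groups, or use Dwyer's filtration refinement; either way this deserves a sentence. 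Second, the mod~$2$ Sato--Levine invariants are not literally Milnor $\bar\mu$-invariants detected by the nilpotent quotients in the way $\mu_{ijk}$ are; their identification with the length-four obstruction is precisely the content of \cite{CST}, so folding them into ``Milnor-type invariants forced to vanish by Stallings'' skips a genuine step. Your Arf argument via a Rokhlin/Robertello identification is sound in outline and indeed matches the spirit of Section~\ref{sec:4Genus} of the paper.

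In short: your route is a legitimate and more explicit alternative to the paper's pure-citation proof, but the $H_2$ issue in the Stallings step and the status of the Sato--Levine invariant both need to be addressed rather than absorbed into the sketch.
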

\begin{proof}
The pairwise linking numbers of $L$ can be computed via the number of intersection points among nullhomologous surfaces bounding $L$.
In particular, stably slice links have pairwise vanishing linking numbers, and whenever this latter condition is satisfied, Schneiderman's \emph{tree-valued invariant} $\tau_1(L)$ is defined.
In his terminology~\cite[Section 1.2]{Schneiderman}, an $m$--component link $L \subset S^3$ is stably slice if it bounds a collection of properly immersed disks $D_1,\ldots,D_m$ such that for some~$n$, the~$D_i$ are homotopic (rel boundary) to pairwise disjoint embeddings in the connected sum $D^4 \csum nS^2 \times S^2$.
As explained in Remark~\ref{rem:EquivalentDef}, this is equivalent to Definition~\ref{def:StablySlice}.

Using this definition of stable sliceness, \cite[Corollary 2]{Schneiderman} shows that $L$ is stably slice if and only if 
$\tau_1(L)$ vanishes. Using~\cite[Theorem 1]{Schneiderman} and~\cite[Theorem 1.1]{CST}, this is equivalent to the triple Milnor linking numbers of $L$, the mod~$2$ Sato-Levine invariants of $L$, and the Arf invariants of the components of~$L$ all vanishing.
\end{proof}

We conclude  by mentioning two additional characterizations of stable sliceness.
\begin{remark}
Combining Theorem~\ref{thm:Schneiderman} with a result of Martin~\cite[Corollary~1]{Martin15}, a link is stably slice if and only if it is band pass equivalent to the unlink, if and only if it is $0$-solvable, a notion due to Cochran-Orr-Teichner~\cite{CochranOrrTeichner}.
\end{remark}

\section{Abelian invariants}\label{sec:Proof}
In Theorem~\ref{thm:S2xS2}, we establish a bound on the stabilizing number in terms of the multivariable signature and nullity.
The main technical ingredient is Theorem~\ref{thm:ImprovedGenusBound}, which provides genus restrictions for nullhomologous cobordisms.
This section is organized as follows. 
In Section~\ref{sub:BackgroundAbelian}, we briefly review the multivariable signature and nullity,
and in Section~\ref{sub:S2S2Abeliean}, we prove Theorems~\ref{thm:ImprovedGenusBound} and~\ref{thm:S2xS2} .

\subsection{Background on abelian invariants}\label{sub:BackgroundAbelian}

We briefly recall a $4$--dimensional interpretation of the multivariable signature and nullity. We refer to~\cite{CimasoniFlorens} for the definition in terms of C-complexes.
\medbreak
We start with some generalities on twisted homology. Let $(X,Y)$ be a CW-pair, let $\varphi \colon \pi_1(X) \to \mathbb{Z}^\mu=\langle t_1,\dots,t_\mu \rangle$ be a homomorphism, and let $\omega = (\omega_1,\dots,\omega_\mu)$ be an element of $\mathbb{T}^\mu:=\big(S^1 \sm \{1\}\big)^\mu \subset~\C^\mu$. 
Compose the induced map
$\mathbb{Z}[\pi_1(X)] \to~\mathbb{Z}[\mathbb{Z}^\mu]$ with the
map~$\mathbb{Z}[\mathbb{Z}^\mu] \xrightarrow{\alpha}~\C$ which evaluates $t_i$ at $\omega_i$ to get a morphism~$\phi \colon \mathbb{Z}[\pi_1(X)] \to \mathbb{C}$ of rings with involutions.
In turn, $\phi$ endows $\C$ with a $(\mathbb{C},\mathbb{Z}[\pi_1(X)])$--bimodule structure.
To emphasize the choice of $\omega$, we shall write $\mathbb{C}^\omega$ for this bimodule. 
We denote the universal cover of $X$ as $p \colon \widetilde{X} \to X$, and set~$\widetilde{Y}:=p^{-1}(Y)$, so that  $C\big(\widetilde{X},\widetilde{Y} \big)$ is a left $\mathbb{Z}[\pi_1(X)]$--module.
Since $\mathbb{C}^\omega$ is a $(\mathbb{C},\mathbb{Z}[\pi_1(W)])$--bimodule, we may consider the homology groups 
\[ H_k(X,Y;\mathbb{C}^\omega)=H_k\big(\mathbb{C}^\omega \otimes_{\Z[\pi_1(X)]} C\big(\widetilde{X},\widetilde{Y}\big)\big),\]
which are complex vector spaces. We now describe two examples that we will use constantly in the remainder of this subsection.

\begin{example} \label{ex:LinkExterior}
A \emph{$\mu$--colored link} is an oriented link $L$ in $S^3$ whose components are partitioned into $\mu$ sublinks $L_1 \cup \cdots \cup L_\mu$.
Consider the exterior~$X_L$ of a $\mu$--colored link~$L$ in $S^3$ together with the morphism~$\pi_1(X_L) \to  \Z^\mu, \gamma \mapsto (\ell k (\gamma,L_1),\ldots,\ell k (\gamma,L_\mu))$.
 Given~$\omega~\in~\mathbb{T}^\mu$, we can construct the complex vector spaces~$H_i(X_L;\C^\omega)$ as described above.
\end{example}

\begin{example} \label{ex:ColoredBoundingSurface}
Let $L \subset S^3$ be a $\mu$--colored link, and let $F = F_1 \cup \cdots \cup F_\mu \subset~D^4$ be a collection of connected locally flat surfaces that intersect transversely in double points, and~$\partial F_i = L_i$. 
	We refer to $F$ as a \emph{colored bounding surface} for $L$.
In this article, we assume that the $F_i$ are connected, but refer to~\cite{ConwayNagelToffoli} for the general case.
The exterior~$D_F$ of $F$ is a $4$--manifold, whose~$H_1(D_F; \Z)\cong \Z^\mu$ is freely generated by the meridians $m_1,\ldots,m_\mu$ of~$F$; see e.g. Lemma~\ref{lem:FreelyGenerated} below.
Mapping the meridian $m_i$ to the $i$--th canonical basis vector of $\Z^\mu$ defines a homomorphism~$H_1(D_F; \Z) \to \Z^\mu$.
The complex vector space~$H_2(D_F;\C^\omega)$ is equipped with a $\C$--valued twisted intersection form, whose signature we denote by $\text{sign}_\omega(D_F)$; we refer to~\cite[Section 2 and 3]{ConwayNagelToffoli} for further details.
\end{example}

Next, we recall a definition of the multivariable signature and nullity, which uses the coefficient systems of Examples~\ref{ex:LinkExterior} and~\ref{ex:ColoredBoundingSurface}.

\begin{definition}\label{def:MultivariableSignatureNullity}
Let $L$ be a $\mu$--colored link and let $\omega \in \mathbb{T}^\mu$.
The \emph{multivariable nullity} of $L$ at $\omega$ is defined as $\eta_L(\omega):=\dim_\C H_1(X_L;\C^\omega)$.
Given a colored bounding surface~$F$ for $L$ as in Example~\ref{ex:ColoredBoundingSurface}, the \emph{multivariable signature} of~$L$ at $\omega$ is defined as the signature $\sigma_L(\omega) := \sign_\omega(D_F)$.
\end{definition}

It is known that $\sigma_L(\omega)$ does not depend on the choice of the colored bounding surface $F$, and that it coincides with the original definition given by Cimasoni-Florens~\cite{CimasoniFlorens}; see~\cite[Proposition 3.5]{ConwayNagelToffoli}.

In fact, Degtyarev, Florens and Lecuona showed that~$\sigma_L(\omega)$ can be computed using other ambient spaces than $D^4$~\cite{DFL18}. 
We recall this result.
In a topological $4$--manifold~$W$, we still refer to a collection of locally flat and connected surfaces~$F = F_1 \cup \cdots \cup F_\mu \subset W$ as a colored bounding surface if the surfaces intersect transversally and at most in double points that lie in the interior of~$W$.
We denote the exterior of a colored bounding surface $F$ in~$W$ by~$W_F~=~W \setminus \bigcup_i \nu(F_i)$.  

The next lemma is used to define a $\C^\omega$-coefficient system on $W_F$.

\begin{lemma}\label{lem:FreelyGenerated}
Let~$W$ be a topological $4$--manifold with~$H_1(W; \Z) = 0$.
If~$F = F_1 \cup \cdots \cup F_\mu$ is 
a colored bounding surface such that $[F_i, \partial F_i] \in H_2(W, \partial W; \Z)$ is zero for each~$i$, then the homology~$H_1(W_F; \Z)$ of the exterior~$W_F$ is freely generated by the meridians of the $F_i$.
\end{lemma}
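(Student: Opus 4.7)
The plan is to exhibit the inverse isomorphism directly: define $\phi\colon\Z^\mu\to H_1(W_F;\Z)$ by sending the $i$-th standard basis vector to the meridian class $[m_i]$, and verify surjectivity and injectivity via geometric arguments with surfaces. The nodal points of $F$ play essentially no role, since meridians are placed at smooth points of $F$ and any generic $2$-dimensional surface in $W$ can be perturbed off the $0$-dimensional singular stratum of $F$.

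For surjectivity, I would take a loop $\gamma\subset W_F$ and use $H_1(W)=0$ to produce a map $f\colon\Sigma\to W$ from a compact oriented surface with $\partial\Sigma=\gamma$. After a generic perturbation, $f$ is transverse to each $F_i$, so $f^{-1}(F)$ is a finite collection of interior points of $\Sigma$ lying on the smooth stratum of $F$. Letting $a_i$ denote the algebraic count of points mapping to $F_i$ and removing small open disk neighborhoods of these points produces a surface contained in $W_F$ whose boundary, apart from $\gamma$, is a signed union of meridians of $F$, giving $[\gamma]=\sum_i a_i[m_i]$ in $H_1(W_F;\Z)$.

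For injectivity, I would suppose $\sum_i a_i[m_i]=0$ in $H_1(W_F;\Z)$ and choose a $2$-chain $C\subset W_F$ with $\partial C=\sum_i a_i m_i$. Capping off each meridian $m_i$ by a fiber disk $d_i$ of the normal bundle to $F_i$ at a smooth point, oriented so that $d_i\cdot F_j=\delta_{ij}$, yields a closed $2$-chain $\bar C:=C-\sum_i a_i d_i$ in $W$. Because $C$ is disjoint from $F$, the intersection pairing computes $\bar C\cdot F_j=-a_j$. By Poincar\'e-Lefschetz duality, the pairing $H_2(W;\Z)\times H_2(W,\partial W;\Z)\to\Z$ is well-defined on homology classes, so the hypothesis $[F_j,\partial F_j]=0$ forces $a_j=0$.

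I do not anticipate any serious obstacle: this is essentially the standard Alexander-type argument for the complement of a codimension-two submanifold, cleanly split so that the hypothesis $H_1(W)=0$ drives surjectivity while the vanishing of $[F_i,\partial F_i]$ drives injectivity. The only point that merits care is bookkeeping signs and orientations so that the $m_i$ form an honest \emph{free} basis, and checking that the perturbation to transversality really can be arranged to miss the double points (a codimension argument).
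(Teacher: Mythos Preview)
Your argument is correct. The paper itself does not prove this lemma; it simply cites \cite[Lemma~4.1]{DFL18}. Your approach is the standard Alexander-duality-style argument for codimension-two complements, split cleanly so that $H_1(W;\Z)=0$ drives surjectivity and the vanishing of $[F_i,\partial F_i]$ drives injectivity via the intersection pairing $H_2(W;\Z)\times H_2(W,\partial W;\Z)\to\Z$. This is almost certainly what the cited reference does as well.

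One small caveat worth flagging: since $W$ is only a \emph{topological} $4$--manifold and the $F_i$ are only locally flat, the transversality step in your surjectivity argument (perturbing the map $f\colon\Sigma\to W$ to meet each $F_i$ transversely in isolated interior points while missing the double-point stratum) appeals to topological transversality for locally flat submanifolds. This is available (locally flat surfaces have normal disk bundles, and topological transversality holds in this setting), but it is not entirely elementary; you should cite it rather than leave it implicit. With that acknowledged, the argument is complete.
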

\begin{proof}
See~\cite[Lemma~4.1]{DFL18}.
\end{proof}

Using Lemma~\ref{lem:FreelyGenerated} and its notations, we construct the twisted homology groups $H_*(W_F;\C^\omega)$ and the twisted signature $\sign_\omega(W_F)$ just as in Example~\ref{ex:ColoredBoundingSurface}. The result of Degtyarev, Florens and Lecuona now reads as follows.
\begin{theorem}\label{thm:OtherAmbientSpaces}
Let~$W$ be a topological $4$--manifold with~$\partial W = S^3$ and~$H_1(W;\Z) =~0$.
Let~$F = F_1 \cup \cdots \cup F_\mu$ be a colored bounding surface for a colored link $L = L_1 \cup \cdots \cup L_\mu$.
If $[F_i,\partial F_i] \in H_2(W, \partial W; \Z)$ is zero for
each~$i$, then
\[ \sigma_L(\omega) = \sign_\omega(W_F) - \sign(W_F) = \sign_\omega(W_F) - \sign(W),\]
where $W_F$ denotes the exterior of the collection~$F$.
\end{theorem}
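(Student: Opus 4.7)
My approach is to reduce the theorem to its defining case $W = D^4$ via a Novikov-additivity argument, with the main technical step being a closed-manifold comparison of twisted and untwisted signatures.

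I would first establish the second equality $\sign(W_F) = \sign(W)$ by a Mayer-Vietoris computation for $W = W_F \cup \nu F$. The long exact sequence of the pair $(W, W_F)$, excision $H_*(W, W_F) \cong H_*(\nu F, \partial \nu F)$, and the vanishing $[F_i, \partial F_i] = 0$ identify the map $H_2(W_F; \Q) \to H_2(W; \Q)$ as a surjection whose kernel is null for the intersection form, so the two signatures agree. Specialising to $(D^4, F^0)$ for any colored bounding surface $F^0 \subset D^4$ of $L$ (whose components are automatically nullhomologous since $H_2(D^4, S^3; \Z) = 0$) yields $\sign(D^4_{F^0}) = \sign(D^4) = 0$, so Definition~\ref{def:MultivariableSignatureNullity} reduces the theorem to a tautology in the base case $W = D^4$.

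For the first equality in general, I would fix such an $F^0 \subset D^4$ and form the closed $4$-manifold $N := W \cup_{S^3} (-D^4)$ together with the closed colored surface $\Sigma := F \cup_L (-F^0)$. The hypotheses force $[\Sigma_i] = 0$ in $H_2(N; \Z)$ and $H_1(N; \Z) = 0$, so Lemma~\ref{lem:FreelyGenerated} applied to $(N, \Sigma)$ guarantees that the coefficient system $\C^\omega$ on $N_\Sigma$ extends across $\Sigma$ to all of $N$. Novikov additivity along the common $3$-manifold piece $X_L \subset S^3$ of the boundaries then gives
\begin{align*}
\sign_\omega(W_F) - \sign_\omega(D^4_{F^0}) &= \sign_\omega(N_\Sigma), \\
\sign(W_F) - \sign(D^4_{F^0}) &= \sign(N_\Sigma).
\end{align*}
Subtracting these two lines and invoking the base case reduces the theorem to proving $\sign_\omega(N_\Sigma) = \sign(N_\Sigma)$ whenever $N$ is a closed $4$-manifold with $H_1(N;\Z) = 0$ containing a nullhomologous closed colored surface $\Sigma$.

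The main obstacle is this last closed-case equality. The conditions $H_1(N;\Z) = 0$ and $[\Sigma_i] = 0$ together imply $H^1(N;\Z^\mu) = 0$, so the classifying map of the extended coefficient system on $N$ is nullhomotopic and $\sign_\omega(N) = \sign(N)$. A twisted Mayer-Vietoris sequence for the decomposition $N = N_\Sigma \cup \nu\Sigma$, in which the twisted homology of the circle bundle pieces $\partial \nu\Sigma_i$ vanishes thanks to $\omega_i \neq 1$, shows that inclusion induces an isomorphism $H_2(N_\Sigma; \C^\omega) \to H_2(N; \C^\omega)$ modulo a null subspace, giving $\sign_\omega(N_\Sigma) = \sign_\omega(N)$. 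Applying the first paragraph to $N$ in the untwisted setting gives $\sign(N_\Sigma) = \sign(N)$, and comparing the two yields the required equality. The bulk of the genuine work in the proof lies in carrying out this twisted Mayer-Vietoris bookkeeping precisely; the rest is routine additivity.
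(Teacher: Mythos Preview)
The paper does not prove this result itself; it cites \cite[Theorem~4.7 and (4.6)]{DFL18} and only remarks that the argument goes through topologically. Your outline (Novikov additivity against the model $D^4$, then a closed-manifold comparison) is the right overall shape, and the additivity along $X_L$ can indeed be justified since the Wall correction term lives in $H_1(\partial X_L;\C^\omega)=0$. However, the final paragraph contains a genuine error that breaks the argument.

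The coefficient system $\C^\omega$ on $N_\Sigma$ does \emph{not} extend to $N$, and Lemma~\ref{lem:FreelyGenerated} does not assert that it does. By construction the homomorphism $H_1(N_\Sigma;\Z)\to\Z^\mu$ sends the meridian $m_i$ of $\Sigma_i$ to the $i$-th basis vector, so $m_i\mapsto\omega_i\neq 1$ in $\C^*$. But in $N$ this meridian bounds the normal disk to $\Sigma_i$, so any representation of $\pi_1(N)$ must kill it. (Equivalently: $H_1(N;\Z)=0$ forces every $\Z^\mu$-coefficient system on $N$ to be trivial, and the trivial system certainly does not restrict to the given nontrivial one on $N_\Sigma$.) There is therefore no twisted Mayer--Vietoris sequence for $N=N_\Sigma\cup\nu\Sigma$ with $\C^\omega$ coefficients on $N$, and the symbol $\sign_\omega(N)$ has no meaning compatible with your setup.

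The fact you actually want is: for a \emph{closed} oriented $4$-manifold $Z$ equipped with a homomorphism $H_1(Z;\Z)\to\Z^\mu$, one has $\sign_\omega(Z)=\sign(Z)$. To invoke it you must cap off $\partial N_\Sigma$ with a $4$-manifold $V$ over $\Z^\mu$ on which the representation genuinely extends; the tubular neighbourhood $\nu\Sigma$ does not qualify. One then has to compute $\sign_\omega(V)-\sign(V)$ for such an auxiliary $V$, built from the plumbing description of $\partial N_\Sigma$, and check it vanishes; this is where the hypothesis $[\Sigma_i]=0$ (hence vanishing normal Euler numbers) does real work. That computation is essentially what \cite{DFL18} carries out.
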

\begin{proof} See~\cite[Theorem~4.7 and (4.6)]{DFL18}. A close inspection of these arguments shows that they carry through to the topological category.
\end{proof}

The $4$--dimensional interpretations of both the Levine-Tristram and multivariable signature were originally stated using finite branched covers and certain eigenspaces associated to their second homology group~\cite{ViroOld, CimasoniFlorens}; the use of twisted homology only emerged later~\cite{Viro09,Powell, ConwayNagelToffoli,DFL18}. 
We briefly recall the construction of the aforementioned eigenspaces in the one variable case.
The multivariable case is discussed in~\cite{CimasoniConway}.
Let $X$ be a CW-complex and let $\varphi \colon H_1(X) \to  \Z_n=:G$ be an epimorphism, where we write $\Z_n:=\Z/n\Z$.
The homology groups of the $G$--cover $X_G$ induced by $\varphi$ are endowed with the structure of a $\C[G]$--module, and $H_*(X_G;\C)=H_*(X;\C[G])$.
Use~$t$ to denote a generator of $G$, and let $\omega$ be a root of unity.
 Consider the $\C$--vector space
\[ \op{Eig}(H_k(X;\C[G]), \omega) =\lbrace x \in  H_k(X;\C[G]) \ | \ tx=\omega x \rbrace. \]
This space will be referred to as the \emph{$\omega$--eigenspace} of $H_k(X;\C[G])$. 
Observe $\C$ is both a $\Z[\pi_1(X)]$--module and  a $\Z[G]$--module; in both cases we write $\C^\omega$.
The following lemma will be useful in Subsection~\ref{sub:CGstabilizing} below.

\begin{lemma}\label{lem:Eigenspaces}
The $\C$-vector spaces~$\op{Eig}(H_k(X;\C[G]), \omega)$ and $H_k(X;\C^\omega)$ are canonically isomorphic.
\end{lemma}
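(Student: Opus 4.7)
The plan is to exploit the semisimplicity of $\C[G]$ when $G=\Z_k$ is finite cyclic, and to realise $\C^\omega$ as the direct summand of $\C[G]$ cut out by the central idempotent associated to $\omega$. The basic algebraic fact is that $\C[G] \cong \prod_{\zeta^k=1} \C_\zeta$ via the mutually orthogonal central idempotents
\[ e_\zeta := \frac{1}{k}\sum_{j=0}^{k-1} \zeta^{-j} t^j, \]
which satisfy $t\cdot e_\zeta = \zeta\, e_\zeta$. Consequently, for any $\C[G]$--module $M$, multiplication by $e_\zeta$ realises the canonical projection onto $\op{Eig}(M,\zeta)$, and one obtains the natural eigenspace decomposition $M = \bigoplus_\zeta \op{Eig}(M,\zeta)$. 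If $\omega^k \neq 1$ then both sides of the claimed isomorphism vanish tautologically, so I would assume $\omega^k = 1$ throughout.

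Next I would set $C := \C[G]\otimes_{\Z[\pi_1(X)]} C(\widetilde{X})$, which is naturally a chain complex of $\C[G]$--modules computing $H_k(X;\C[G])$. Viewing $\C^\omega$ as a right $\C[G]$--module via the factorisation $\Z[\pi_1(X)] \to \Z[G] \to \C[G] \to \C$, the identification $\C^\omega \cong e_\omega\C[G] = \C\cdot e_\omega$ via $1 \mapsto e_\omega$ is canonical, because $e_\omega$ is determined by $\omega$ and by the ring structure of $\C[G]$ alone. Change of rings then yields a canonical chain-level isomorphism
\[ \C^\omega \otimes_{\Z[\pi_1(X)]} C(\widetilde{X}) \;\cong\; e_\omega\C[G]\otimes_{\C[G]} C \;\cong\; e_\omega C. \]

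Finally, since $e_\omega$ is a central idempotent, the functor $M\mapsto e_\omega M$ is exact and commutes with the passage to homology. Applying this to $C$ and combining with the previous step yields
\[ H_k(X;\C^\omega) \;\cong\; H_k(e_\omega C) \;=\; e_\omega H_k(C) \;=\; \op{Eig}(H_k(X;\C[G]),\omega), \]
which is the desired canonical isomorphism.

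I do not anticipate a real obstacle: every ingredient is standard representation theory of finite cyclic groups over $\C$. The only care needed is bookkeeping of left versus right module structures, given that $\C^\omega$ is presented in the paper as a $(\C,\Z[\pi_1(X)])$--bimodule; once one is consistent about sides, each identification above depends only on $\omega$ and on the intrinsic algebra $\C[G]$, which is exactly the sense in which the isomorphism deserves to be called canonical.
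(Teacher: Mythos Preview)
Your argument is correct and follows essentially the same strategy as the paper: both exploit the semisimplicity of $\C[G]$ to commute the passage to the $\omega$-isotypic component with taking homology, and then use associativity of the tensor product to land in $H_k(X;\C^\omega)$.

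The presentations differ in emphasis. The paper first identifies $\op{Eig}(H_k(X;\C[G]),\omega)$ with $\C^\omega\otimes_{\C[G]}H_k(X;\C[G])$ by citing a reference, then invokes Maschke's theorem abstractly to conclude that $\C^\omega$ is flat over $\C[G]$, and finally pushes the tensor product inside the homology. You instead work concretely with the central idempotent $e_\omega$, realise $\C^\omega$ as the summand $e_\omega\C[G]$, and use directly that multiplication by a central idempotent is an exact functor. Your route is slightly more self-contained (no appeal to Maschke or to an external reference for the eigenspace--tensor identification), while the paper's formulation has the minor advantage of applying verbatim to any finite $G$ without writing down explicit idempotents. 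The remark about the case $\omega^k\neq 1$ is harmless but unnecessary, since in the paper's setup the $\Z[G]$--module structure on $\C^\omega$ already forces $\omega^k=1$.
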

\begin{proof}
The subspace~$\op{Eig}(H_k(X;\C[G]),\omega)$ is isomorphic to $\C^\omega \otimes_{\C[G]} H_k(X;\C[G])$; see e.g.~\cite[Proposition 3.3]{CimasoniConway}.  
Since~$G$ is finite, Maschke's theorem implies that~$\C[G]$ is a semisimple ring~\cite[Chapter 1, Section 5.7]{Wisbauer}. 
Consequently, all its (left)-modules are projective and in particular flat~\cite[Theorem 4.2.2]{Weibel}. Combining these two observations, it follows that 
\[ \op{Eig}(H_k(X;\C[G]),\omega) \cong \C^\omega \otimes_{\C[G]} H_k(X;\C[G]) \cong H_k(\C^\omega \otimes_{\mathbb{C}[G]} C(X_G)).\]
Using this isomorphism and the associativity of the tensor product, we obtain the announced equality: 
\begin{align*}
\op{Eig}(H_k(X_G;\C),\omega)
&=H_k(\C^\omega \otimes_{\mathbb{C}[G]} C(X_G))\\
&= H_k\left( \mathbb{C}^\omega \otimes_{\mathbb{C}[G]} \mathbb{C}[G] \otimes_{\mathbb{Z}[\pi_1(X)]} C\big(\widetilde{X}\big) \right) \\
&=H_k\left( \mathbb{C}^\omega \otimes_{\mathbb{Z}[\pi_1(X)]} C\big(\widetilde{X}\big) \right)= H_k(X;\mathbb{C}^\omega). \qedhere
\end{align*}
\end{proof}

\subsection{Nullhomologous cobordism and the lower bound.}\label{sub:S2S2Abeliean}
The aim of this subsection is to use the multivariable signature and nullity in order to obtain a lower bound on the stabilizing number of a link.
\medbreak

Let $V$ be a closed topological $4$--manifold with~$H_1(V;\Z)=0$ and set~$W=V \csum (S^3 \times I)$. Observe that the boundary~$\partial W = S^3 \times \{1\} \sqcup -\big( S^3 \times \{-1\} \big)$ consists of two copies of~$S^3$. 
\begin{definition}\label{def:NullhomologousCobordism}
A \emph{nullhomologous cobordism} from a $\mu$--colored link~$L$
to a~$\mu$--colored link~$L'$ is a collection of locally flat surfaces~$\Sigma = \Sigma_1 \cup \cdots \cup \Sigma_\mu$ in~$W = V \csum (S^3 \times I)$ that have the following properties:
\begin{enumerate}
\item each surface~$\Sigma_i$ is connected and has boundary
\[ L'_i \times\{1\} \sqcup -L_i \times \{-1\} \subset S^3 \times \{1\} \sqcup -\big(S^3 \times \{-1\}\big),\]
\item each surface $\Sigma_i$ is embedded and the surfaces intersect transversally and at most in double points that lie in the interior of~$W$,
\item\label{item:nullhom} each class~$[\Sigma_i, \partial \Sigma_i] \in  H_2(W, S^3 \times I \sm \nu (D^4); \Z)$ is zero, where $D^4 \subset \operatorname{Int}(S^3 \times I)$ is the 4-ball leading to the connected sum~$V \csum (S^3 \times I)$.
\end{enumerate}
\end{definition}
In contrast to~\cite{ConwayNagelToffoli}, we require the surfaces~$\Sigma_i$ to be connected. This simplifies notation, and the extra generality is unnecessary for the later applications. 
We list three natural choices for~$V$. 
The $4$--sphere~$V=S^4$, where~$W$ is~$S^3 \times I$ and condition~(\ref{item:nullhom}) is automatic. 
Another choice is~$V = S^2 \times S^2$.
This time, colored cobordisms are not automatically nullhomologous; but we impose condition~(\ref{item:nullhom}). 
The same holds for the cases $V = {\C P}^2$ and $\ol{\C P}^2$ that we study in Example~\ref{ex:CP2} below.

Let $U \subset \Z[t_1^{\pm 1}, \ldots, t_\mu^{\pm 1}]$ be the subset of all polynomials with $p(1,\ldots,1) = \pm 1$, and define
\[ \mathbb{T}^\mu_! = \{ \omega \in \mathbb{T}^\mu \colon p(\omega) \neq 0 \text{ for } p \in U\}.\]
This set is a multivariable generalization of the concordance roots studied in~\cite{NagelPowell}; we refer to~\cite[Section 2.4]{ConwayNagelToffoli} for a more thorough discussion.

The next theorem provides an obstruction for two links to cobound a nullhomologous cobordism.
\begin{theorem}\label{thm:ImprovedGenusBound}
Let $V$ be a closed topological $4$--manifold with $H_1(V;\Z)=0$. If $\Sigma \subset (S^3 \times I) \csum V$ is a nullhomologous cobordism with $c$ double points between two $\mu$-colored links $L$ and~$L'$, then
\begin{equation}
\label{eq:ImprovedGenusBound}
|\sigma_{L'}(\omega)-\sigma_{L}(\omega)+\op{sign}(V)| + |\eta_{L'}(\omega)-\eta_{L}(\omega)|-\chi(V)+2 
	\leq c-\sum_{i=1}^{\mu} \chi(\Sigma_i) 
	\end{equation}
for all $\omega\in \mathbb{T}_!^\mu$.
\end{theorem}

The proof will follow along arguments from~\cite[Section~3]{ConwayNagelToffoli}. 
We will point out the necessary adaptions, but suppress arguments which
go through verbatim.
\begin{proof}
As in Lemma~\ref{lem:FreelyGenerated}, denote the exterior of $\Sigma$ in $W = (S^3 \times I) \csum V$ by $W_\Sigma$. 
Observe that parts of the boundary of $W_\Sigma$ are identified with the exteriors~$-X_L$ and~$X_{L'}$.
Use Lemma~\ref{lem:FreelyGenerated} to pick a homomorphism~$H_1(W_\Sigma; \Z) \to \Z^\mu$ that sends the meridians of $\Sigma_i$ to the canonical basis element~$e_i$ of $\Z^\mu$. 

The idea of the proof is to bound the twisted signature of $W_\Sigma$ using its twisted Betti numbers. 
The corresponding twisted signatures are related to the multivariable signatures of~$L$ and $L'$, while the twisted Betti numbers are related to the multivariable nullities $\eta_L(\omega),\eta_{L'}(\omega)$, as well as to $\chi(\Sigma_i)$ and $\chi(V)$.

Proceeding word by word as in~\cite[Proof of Lemma~3.10]{ConwayNagelToffoli}, 
establish that the homology groups decompose as follows: 
\[ H_i(\partial W_\Sigma; \C^\omega)\cong H_i(X_L; \C^\omega) \oplus H_i(X_{L'}; \C^\omega) .\] 
Deduce the following about the twisted Betti numbers~$\beta_i^\omega$ of~$W_\Sigma$:  
\[ \beta_1^\omega(W_\Sigma) \leq \eta_L(\omega) \text{ and } 
\beta_1^\omega(W_\Sigma) \leq \eta_{L'}(\omega),\]
and $H_i(W_\Sigma; \C^\omega) = 0$ for $i = 0,3,4$.
To see this, use that $\eta_L(\omega)=\dim_\C H_i(X_L; \C^\omega)$ (and similarly for $L'$) as well as Poincar\'e duality and the universal coefficient theorem; see \cite[Proof of Lemma~3.11]{ConwayNagelToffoli} for further details.

Next, we estimate the dimension of the space that supports the intersection form of $W_\Sigma$.
Let $j \colon H_2(\partial W_\Sigma; \C^\omega) \to H_2(W_\Sigma; \C^\omega)$ be the inclusion induced map. 
The twisted intersection form descends to~$\op{coker} (j)$.
The dimension of this space is~$\beta_2^\omega(W_\Sigma) - \beta_2^\omega(\partial W_\Sigma) + \beta_1^\omega(W_\Sigma)$ \cite[Proof of Lemma~3.12]{ConwayNagelToffoli}.
The Euler characteristic is the alternating sum of the Betti numbers.
Since the Euler characteristic can be computed with any coefficients, we obtain the following estimate (again details can be found in~\cite[Proof of Theorem~3.7]{ConwayNagelToffoli}):
\begin{equation} \label{eq:CNT}
|\text{sign}_\omega({W_\Sigma})|+|\eta_{L'}(\omega)-\eta_{L}(\omega)| \leq \chi(W_\Sigma).
\end{equation}

Our goal is to compute the twisted signature $\text{sign}_\omega({W_\Sigma})$. Let $F \subset D^4$ be a colored bounding surface for $L$ and let $\nu(F)$ be a tubular neighborhood for $F$ in~$D^4$. 
Recall that~$\sigma_{L}(\omega) = \text{sign}_\omega(D^4 \setminus \nu(F))$. Glue $D^4$ to $W$ by identifying $S^3 = \partial D^4$ with $-\big(S^3 \times \{-1\}\big) \subset \partial W$, which contains $L$. Call the result~$Z$.
The surfaces~$F_i$ and~$\Sigma_i$ glue together to~$S_i = F_i \cup_{L_i \times S^1} \Sigma_i \subset Z$, and we set $S:=S_1 \cup \ldots \cup S_\mu$.
Denote the exterior of~$S$ by $Z_S$, which naturally decomposes as~$Z_S = \big( D^4 \setminus \nu(F) \big) \cup W_\Sigma$.
Since $\omega \in \mathbb{T}^\mu$, the additivity of signatures \cite[Proposition~2.13]{ConwayNagelToffoli} implies that 
\begin{align}\label{eq:AdditivityTwistedZW}
 \text{sign}_\omega(Z_S)&=\sigma_{L}(\omega)+\text{sign}_\omega(W_\Sigma), \\
\text{sign}(Z_S)
& =\text{sign}(W_\Sigma). 
\nonumber
 \end{align}
For the second equality, we used the additivity of signatures together with the fact that $\sign(D^4 \setminus \nu F)=0$~\cite[Proposition 3.3]{ConwayNagelToffoli}.
Note that $H_1(Z;\Z)$ vanishes, $\partial  Z = S^3$, the link~$L'$ bounds~$S \subset Z$, and the $[S_i,\partial S_i] \in H_2(Z, \partial Z;\Z)$ vanish; see e.g. Lemma~\ref{lem:StablyConcordantUnlinkStablySlice}.
We are therefore in the setting of Theorem~\ref{thm:OtherAmbientSpaces} and, consequently, we obtain 
\[ \sigma_{L'}(\omega)
=\text{sign}_{\omega}(Z_S)-\text{sign}(Z_S). \] 
Combining this with~\eqref{eq:AdditivityTwistedZW}, we therefore deduce that
\begin{align}\label{eq:DFL}
 \text{sign}_\omega(W_\Sigma)
&=\text{sign}_\omega(Z_S)-\sigma_{L}(\omega)
 =\sigma_{L'}(\omega)-\sigma_{L}(\omega)+\text{sign}(W_\Sigma) \\ \nonumber
 &= \sigma_{L'}(\omega)-\sigma_{L}(\omega)+\text{sign}(W), 
 \end{align}
where Theorem~\ref{thm:OtherAmbientSpaces} is used for the last equality.

We are now in position to conclude. We remark that $\chi(W_\Sigma)=\chi(W)-\chi(\Sigma)$. 
Combining this observation with Equation~(\ref{eq:CNT}) and Equation~(\ref{eq:DFL}), it follows that 
\begin{equation}
\label{eq:NearlyGenusBound}
|\sigma_{L'}(\omega)-\sigma_{L}(\omega)+\text{sign}(W)|+|\eta_{L'}(\omega)-\eta_{L}(\omega)| \leq \chi(W)-\chi(\Sigma).
\end{equation}
Since $W$ was defined as $V \csum (S^3 \times I)$, we deduce that $\chi(W)=\chi(V)-2$. An Euler characteristic computation now shows that $\chi(\Sigma)=\sum_{i=1}^\mu \chi(\Sigma_i)-c$; see \cite[Proof of Lemma 3.9]{ConwayNagelToffoli}.
Inserting this into~\eqref{eq:NearlyGenusBound} yields~\eqref{eq:ImprovedGenusBound} and the proof is completed.
\end{proof}

Note that when $V=S^4$, Theorem~\ref{thm:ImprovedGenusBound} recovers~\cite[Theorem 3.7]{ConwayNagelToffoli}, which is itself a generalization of the classical Murasugi-Tristram inequality~\cite{Murasugi,Tristram, FlorensGilmer, Florens, CimasoniFlorens, Viro09, Powell}. Next, we verify the formula of Theorem~\ref{thm:ImprovedGenusBound} for $V = \overline { \C P}^2$ and highlight its connection to crossing changes.
\begin{example}
\label{ex:CP2}
	Suppose a link $L'$ is obtained from a link $L$ by changing a positive crossing within a component of $L$ to a negative crossing. 
    The trace of the isotopy
depicted in Figure~\ref{fig:ComplexHandle} forms a nullhomologous concordance from~$L$ to~$L'$ in~$(S^3 \times I) \csum \overline{\C P}^2$. 
Consider the special case of the right-handed trefoil~$L$, whose signature is~$\sigma_{L}(-1) = -2$. By changing a positive crossing, we obtain the unknot~$L'= U$ with signature~$\sigma_U(-1) =~0$. We verify the formula of Theorem~\ref{thm:ImprovedGenusBound} by noting that~$| 0 - (-2) - 1 | + 0 \leq 1$.
\begin{figure}
\begin{tikzpicture}
	\node at (0,0) {\includegraphics{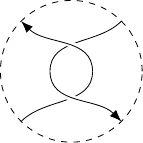}};
	\node at (1.75,0) {$\rightsquigarrow$};
	\node at (3.5,0) {\includegraphics{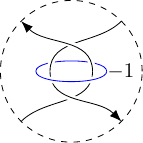}};
	\node at (5.5,0) {$\rightsquigarrow$};
	\node at (8.5,0) {\includegraphics{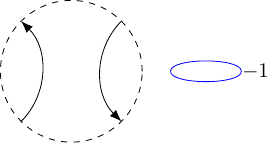}};
\end{tikzpicture}
	\caption{Performing a crossing change by sliding in $\overline{\C P}^2$.}
\label{fig:ComplexHandle}
\end{figure}
\end{example}

We now specialize Theorem~\ref{thm:ImprovedGenusBound} to stably slice links by setting $V = n S^2 \times S^2$. As a result, we obtain Theorem~\ref{thm:S2xS2Intro} from the introduction.
\begin{theorem}\label{thm:S2xS2}
If an $m$--component link $L$  is stably slice, then
\[ |\sigma_L(\omega)|+|\eta_L(\omega)-m+1| \leq 2\sn(L) .\]
for all $\omega \in \mathbb{T}^m_!$.
\end{theorem}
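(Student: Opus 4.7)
The plan is to apply Theorem~\ref{thm:ImprovedGenusBound} to the closed manifold $V = n S^2 \times S^2$ with $n := \sn(L)$, using a nullhomologous cobordism from $L$ to the $m$--component unlink $U$. By Definition~\ref{def:StablySlice}, $L$ bounds a disjoint collection of locally flat nullhomologous disks $\Delta_1, \ldots, \Delta_m$ in $D^4 \csum n S^2 \times S^2$.

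First I would build the cobordism. Remove a small open $4$--ball $B \subset \operatorname{Int}(D^4)$ chosen disjoint from the connected-sum neck and meeting each $\Delta_i$ transversely in a single small subdisk. The complement is diffeomorphic to $W := (S^3 \times I) \csum n S^2 \times S^2$, with $L \subset \partial D^4$ on one boundary component and a small $m$--component unlink $U \subset \partial B$ on the other. The truncated disks $\Sigma_i := \Delta_i \setminus \nu(B)$ form annuli cobounding $L_i$ and $U_i$. The step requiring care is verifying that $\Sigma$ is nullhomologous in the sense of Definition~\ref{def:NullhomologousCobordism}(\ref{item:nullhom}): excision together with the long exact sequences of pairs shows that both $H_2(W, (S^3 \times I) \setminus \nu(D^4); \Z)$ and $H_2(D^4 \csum n S^2 \times S^2, S^3; \Z)$ are naturally isomorphic to $H_2(V; \Z)$, and under these identifications $[\Sigma_i, \partial \Sigma_i]$ corresponds to $[\Delta_i, L_i]$, which vanishes by hypothesis.

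Next I would assemble the remaining numerical inputs for Theorem~\ref{thm:ImprovedGenusBound}. Since $\chi(S^2 \times S^2) = 4$ and $\sign(S^2 \times S^2) = 0$, iterating the connect-sum formulas yields $\chi(V) = 2n + 2$ and $\sign(V) = 0$. Each $\Sigma_i$ is an annulus so $\chi(\Sigma_i) = 0$, and the $\Delta_i$ being disjointly embedded gives $c = 0$. For the $m$--component unlink, split-union additivity (starting from $\sigma_{\text{unknot}}(\omega) = \eta_{\text{unknot}}(\omega) = 0$, with each split union contributing $+1$ to the nullity while preserving vanishing signature) gives $\sigma_U(\omega) = 0$ and $\eta_U(\omega) = m - 1$ for every $\omega \in \mathbb{T}^m_!$; this can also be verified directly by a chain-level computation using the model $X_U \simeq \bigvee^m S^1 \vee \bigvee^{m-1} S^2$.

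Substituting these values into Theorem~\ref{thm:ImprovedGenusBound} produces
\[ |\sigma_L(\omega)| + |\eta_L(\omega) - m + 1| - (2n+2) + 2 \leq 0,\]
which rearranges to the claimed bound $|\sigma_L(\omega)| + |\eta_L(\omega) - m + 1| \leq 2\sn(L)$. The main obstacle is the nullhomologous verification in the cobordism construction; everything else is routine bookkeeping driven by Theorem~\ref{thm:ImprovedGenusBound}.
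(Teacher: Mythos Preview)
Your proposal is correct and follows essentially the same route as the paper: apply Theorem~\ref{thm:ImprovedGenusBound} with $V=nS^2\times S^2$ and $L'$ the $m$--component unlink, then plug in $\sign(V)=0$, $\chi(V)=2n+2$, $c=0$, $\chi(\Sigma_i)=0$, $\sigma_U(\omega)=0$, $\eta_U(\omega)=m-1$. The paper defers the construction of the nullhomologous cobordism (removing a ball meeting each slice disk) to~\cite[Proof of Corollary~3.14]{ConwayNagelToffoli}, whereas you spell it out; the only point to make explicit is that a small isotopy is needed to arrange that all the disjoint $\Delta_i$ pass through a common small ball $B$, but this is standard in a connected $4$--manifold.
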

\begin{proof}
Assume that $L$ bounds $m$ disjoint nullhomologous slice disks in $D^4 \csum nS^2 \times~S^2$, with~$n=\op{sn}(L)$.
By taking $V=n\, S^2 \times S^2$ and 
$L'$ to be the unlink in Theorem~\ref{thm:ImprovedGenusBound},
we obtain the following inequality:
\begin{equation}\label{eq:ProofS2xS2}
 |\sigma_L(\omega)+\text{sign}(n\, S^2 \times S^2)|+|\eta_L(\omega)-m+1| -\chi(n\,S^2 \times S^2)+2 \leq c-\sum_{i=1}^{m} \beta_1(F_i).
\end{equation}
Since $L$ is stably slice, the right hand side of (\ref{eq:ProofS2xS2})
is zero. Since the signature is additive under direct sum and the intersection form of $S^2 \times S^2$ is hyperbolic, $\text{sign}(n\,S^2 \times S^2)$ also vanishes. Since the Euler characteristic of $S^2 \times S^2$ is $4$, we get $\chi(n\,S^2 \times S^2)=4n-2(n-1)=2n+2$. The result follows.
\end{proof}

Next, we provide an example of Theorem~\ref{thm:S2xS2}.
\begin{example}\label{ex:6Compo} 
We show that the $6$--component link $L$ in Figure~\ref{fig:6Compo} has~$\sn(L) =~2$.
When arguing as in Remark~\ref{rem:Kirby}, we may arrange the $0$-framed Hopf link so that each of its components links multiple bands. 
Using such a \emph{generalized band pass}, we can move a single band passed multiple bands. 
Via such a move, at the cost of a single $S^2 \times S^2$, we can split off two components of $L$ (corresponding to the Bing double of one component of the Borromean pattern).
After another band pass move, we obtain the $6$-component unlink, and so~$\sn(L) \leq 2$.
The equality is obtained using Theorem~\ref{thm:S2xS2}.
To compute the multivariable signature and nullity of~$L$, we use C-complexes~\cite{CimasoniFlorens}.
Pick the obvious planar $3$--component C-complex~$F$ for $L$, all of whose surfaces are disks.
A short computation shows that generalized Seifert matrices for $L$ are given by  $A^\varepsilon= \bsm 0 & 1&1\\ 1&0 &1 \\ 1 & 1 &0 \esm$ for each~$\varepsilon$. It follows that $\sigma_L(-1,\ldots,-1)=-1$ and $\eta_L(-1,\ldots,-1)=2$, since $F$ has $3$ components; see~\cite[Section 2]{CimasoniFlorens}. 
Consequently, the bound of Theorem~\ref{thm:S2xS2} is equal to
\[  |\sigma_L(\omega)|+|\eta_L(\omega)-m+1|=|-1|+|2-6+1|=4.\]
This shows that $\operatorname{sn}(L)=2$, as claimed.
\begin{figure}[!htb]
\includegraphics{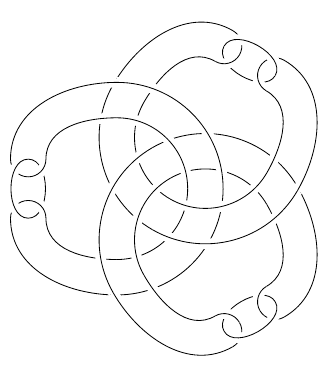}
\caption{The $6$--component link of Example~\ref{ex:6Compo}.}
\label{fig:6Compo}
\end{figure}
\end{example}

For the sake of completeness, we also provide a family of knots with arbitrary stabilizing number.
\begin{example}
We assert that the knot~$K_\ell = \#_{i=1}^\ell \, 8_7$, which is an $\ell$--fold connected sum of the knot~$K=8_7$ has stabilizing number~$\sn(K_\ell) = \ell$.
First note that since $\Arf(K)=0$, we have $\Arf(K_\ell)=0$ and therefore $\sn(K_\ell) < \infty$ for each $\ell$.
As $\sigma_K(-1)=2$, we deduce from Theorem~\ref{thm:S2xS2} that $ 2\ell=\sigma_{K_\ell}(-1) \leq 2\sn(K_\ell)$.
Theorem~\ref{thm:StablGenus}, which we prove in Section~\ref{sec:4Genus}, gives $ \sn(K_\ell) \leq g_4^{\op{top}}(K_\ell)$.
We obtain
$$ \ell=\frac{1}{2}\sigma_{K_\ell}(-1) \leq \sn(K_\ell)\leq g_4^{\op{top}}(K_\ell) \leq \ell g_4^{\op{top}}(K)=\ell,$$
where in the last equality, we used that $g_4(K_\ell)=1$; see~\cite{KnotInfo}.
\end{example}

\section{Casson-Gordon invariants}\label{sec:CG}
We give a bound on the stabilizing number in terms of Casson-Gordon invariants in Theorem~\ref{thm:CGstabilizing}, and an example of an algebraically slice knot with non-zero stabilizing number.
 Furthermore, we construct knots~$K$ such that $\op{sn}(K) < g_4^{\op{top}}(K)$.
 This section is organized as follows. In Section~\ref{sub:CGDef}, we review Casson-Gordon invariants and in Section~\ref{sub:CGstabilizing}, we prove Theorem~\ref{thm:CGstabilizing}.
 In Section~\ref{sub:NotGenus}, we describe an infinite family of knots that have $\sn(K)=1$ and $g_4^{\op{top}}(K)=2$.

\subsection{Background on Casson-Gordon invariants}\label{sub:CGDef}

Given a knot $K$, we use the following notations:~$M_K$ is the $0$--framed surgery of $S^3$ along $K$. Its $n$--fold cover is denoted by $p \colon M_n(K) \to M_K$, and $\Sigma_n(K)$ is the $n$--fold cover over $S^3$ branched along $K$. For the next paragraphs, we fix an epimorphism
\[ \chi \colon H_1(\Sigma_n(K);\Z) \to \Z_d. \]
For the remainder of this subsection, denote the $d$--th root of unity by $\omega=\operatorname{exp}(\frac{2\pi i}{d})$ 

Since the bordism group~$\Omega_3(\Z_d)$ is finite, there exists a non-negative integer~$r$, a $4$--manifold $W$ and a map $\psi \colon \pi_1(W) \to \Z_d$ such that $\partial (W,\psi) = r(\Sigma_n(K),\chi)$. The morphism 
$\Z[\pi_1(W)] \to \C$, defined by $g \mapsto \omega^{\psi(g)}$,
gives rise to twisted homology groups $H_*(W;\C^\psi)$ and to a $\C$--valued Hermitian intersection form~$\lambda_{\C}$ on~$H_2(W;\C^\psi)$, whose signature is denoted $\text{sign}^\psi(W):= \text{sign}(\lambda_{\C}(W))$. 
On the boundary, we obtain twisted homology groups $H_*(\Sigma_n(K),\C^\chi)$.
\begin{definition}\label{def:CassonGordonSigma}
Let $K$ be a knot and let $\chi \colon H_1(\Sigma_n(K);\Z) \to \Z_d$ be an epimorphism.
 The \emph{Casson-Gordon $\sigma$--invariant} and \emph{Casson-Gordon nullity} are
\begin{align*}
& \sigma_n(K,\chi):=\frac{1}{r}\left (\text{sign}^\psi(W)-\text{sign}(W)\right ) \in \mathbb{Q}. \\
	&\eta_n(K,\chi):=\dim_\C H_1(\Sigma_n(K);\C^\chi).
\end{align*}
\end{definition}

Casson and Gordon showed that $\sigma_n(K,\chi)$ is well-defined~\cite[Lemma 2.1]{CassonGordon1}.
Note that it is sometimes convenient to think of $\chi$ as a (not necessarily surjective) character with values in some~$\Z_m$ with order $d$; see also Remark~\ref{rem:CharacterLinkingForm}.
Analogous signature and nullity invariants are also defined
 for the more general setting described below.
\begin{remark}\label{rem:Sigma3Manifold}
Given a closed $3$--manifold with an epimorphism $\chi \colon H_1(M;\Z) \to~\Z_d$, define a signature invariant $\sigma(M,\chi)$ just as in Definition~\ref{def:CassonGordonSigma}: bordism theory ensures the existence of a pair $(W,\psi)$ such that $\partial (W,\psi)=r(M,\psi)$ and declare
\[ \sigma(M,\chi):=\frac{1}{r}\left (\text{sign}^\psi(W)-\text{sign}(W)\right ). \]
\end{remark}

Next, we define the invariant $\tau_n(K,\chi)$. 
Composing the projection-induced map with abelianization gives rise to the map
\[ \pi_1(M_n(K)) \xrightarrow{p_*} \pi_1(M_K) \xrightarrow{\phi_K} H_1(M_K;\Z) \cong \Z.\] 
Since the image of this map is isomorphic to $n \mathbb{Z}$, mapping to it produces a surjective map $\alpha \colon \pi_1(M_n) \twoheadrightarrow n \mathbb{Z}$. 
Two Mayer-Vietoris arguments provide a surjection $H_1(M_n(K);\Z)\cong H_1(X_n(K);\Z) \twoheadrightarrow H_1(\Sigma_n(K);\Z)$.
Using this surjection, we deduce that
the character~$\chi$ induces a character on $H_1(M_n(K);\Z)$ for which we use the same notation. We have therefore obtained a homomorphism
\begin{align*}
\alpha \times \chi\colon \pi_1(M_n(K)) &\stackrel{}{\longrightarrow} \mathbb{Z} \times \mathbb{Z}_d\\
 g  &\mapsto (t_K^{n\phi_K(g)},\chi(g)).
\end{align*}
As the bordism group $\Omega_3(\Z \times \Z_d)=H_3(\Z \times \Z_d;\Z)=H_3(\Z_d;\Z)$ is finite, there is a non-negative integer~$r$, a $4$--manifold $W$ and a homomorphism $\psi \colon \pi_1(W) \to \Z \times \Z_d$ such that~$\partial (W,\psi)=~r(M,\alpha \times~\chi)$. 
The assignment $(k,l) \to \omega^l t^k$ gives rise to a morphism $\Z[\Z \times \Z_d] \to \C(t)$. The composition 
\begin{align*}
\Z[\pi_1(W)] \xrightarrow{\psi} \Z[\Z \times \Z_d] &\to \C(t)
\end{align*}
gives rise to twisted homology groups $H_*(W;\C(t)^\psi)$. 
Assume from now on that $d$ is a prime power.
Using~\cite[Lemma 4]{CassonGordon2}, this implies that~$H_*(M_n(K);\C(t)^\psi) =~0$.
Therefore $\lambda_{\C(t)}$ is non-singular and defines a Witt class $[\lambda_{W,\C(t)}] \in W(\C(t))$. 
Whereas the standard intersection pairing $\lambda_{W,\Q}$ may be singular, modding out by the radical leads to a nonsingular form~$\lambda_{W,\Q}^{\text{nonsing}}$. Using the inclusion induced map $W(\mathbb{Q}) \to W(\C(t))$, we therefore obtain $[\lambda_{W,\Q}^{\text{nonsing}}]$ in $W(\C(t))$.

\begin{definition}\label{def:CassonGordonInvariant}
Let $K$ be an oriented knot, let $n,m$ be positive integers and let~$\chi \colon H_1(\Sigma_n(K);\Z) \to \Z_m$ be a prime power order character.
The \emph{Casson-Gordon $\tau$--invariant} is the Witt class
\[ \tau_n(K,\chi)=\Big([\lambda_{W,\mathbb{C}(t)}]-[\lambda_{W,\Q}^{\text{nonsing}}] \Big) \otimes \frac{1}{r} \in  W(\mathbb{C}(t))\otimes \mathbb{Q}. \]
\end{definition}

Note that if $n$ is additionally assumed to be a prime power, then $\tau_n(K,\chi)$ provides an obstruction to sliceness~\cite[Theorem 2]{CassonGordon2}.
In what follows, we focus on the case $n=2$ and abbreviate $\sigma_2(K,\chi),\eta_2(K,\chi)$ and $\tau_2(K,\chi)$ as $\sigma(K,\chi),\eta(K,\chi)$ and $\tau(K,\chi)$.

The next remark reviews how the linking form on $H_1(\Sigma_n(K);\Z)$ provides a convenient way to list the finite order characters on $H_1(\Sigma_n(K);\Z)$.  Here, recall that given a rational homology sphere $M$, the linking form $\beta_M$ is a $\Q/\Z$-valued non-singular symmetric pairing on $H_1(M;\Z)$. We write $\beta_K$ instead of $\beta_{\Sigma_2(K)}$.

\begin{remark}\label{rem:CharacterLinkingForm}
Think of the group~$\Z_m$ as the quotient~$\Z/m\Z$, and note that $k \mapsto \frac{k}{m}$ defines an isomorphism onto its image in~$\Q/\Z$.
Using this isomorphism, every character $\chi \colon H_1(M;\Z) \to \Z_m$  defines an element~$\chi/m$ in the group $\op{Hom}_\Z( H_1(M;\Z),\Q/\Z )$. 
Conversely, any homomorphism $H_1(M;\Z) \to \Q/\Z$ is of finite order, say $m$, and so is of the form~$\chi/m$ for a character~$\chi \colon H_1(M; \Z) \to \Z_m$. 
Thus such finite order characters correspond bijectively to elements of $\op{Hom}_\Z(H_1(M;\Z),\Q/\Z)$. 
If $M$ is a rational homology sphere, then the linking form $\beta_M$ is nonsingular, and its adjoint defines an isomorphism
$H_1(M; \Z) \xrightarrow{\sim} \op{Hom}_\Z(H_1(M;\Z),\Q/\Z)$. 
Via these two correspondences, we associate to an element~$x \in H_1(M;\Z)$ a character~$\chi_x \colon H_1(M; \Z) \to \Z_m$ for some $m$, or immediately refer to elements~$\chi \in H_1(M;\Z)$ as characters.
\end{remark}

Just as in Remark~\ref{rem:CharacterLinkingForm}, we shall often think of characters on $H_1(M;\Z)$ as taking values in $\Q/\Z$, the choice of $m$ being somewhat immaterial.
Motivated by Remark~\ref{rem:CharacterLinkingForm}, we also recall some terminology on linking forms. If $B$ is a non-degenerate size $n$ symmetric matrix over $\Z$, then we can consider the non-singular symmetric pairing
\begin{align*}
 \lambda_B \colon \Z^n/B\Z^n \times  \Z^n/B\Z^n &\to \Q/\Z \\
(x,y) &\mapsto x^T B^{-1}y. 
\end{align*}

We say that $\beta_K$ is \emph{presented by} a non-degenerate symmetric matrix $B$ if $\beta_K$ is isometric to $\lambda_B$. 
\begin{remark} \label{rem:LinkingFormSeifert}
Given a knot $K$, the linking form $\beta_K$ is presented by $A+A^T$, where~$A$ is any Seifert matrix for $K$~\cite[p. 31]{GordonSurvey}.
\end{remark}

We conclude this subsection by mentioning a satellite formula for $\sigma$ and $\eta$ in the winding number $0$ case. Such a formula is stated in~\cite{Abchir}, but as we illustrate in Example~\ref{ex:Motivation} below, it unfortunately contains a mistake. While Appendix~\ref{Appendix}, contains a suitably modified statement, the following result is all we need for the moment.

\begin{theorem}\label{thm:AbchirSimplified}
Let $R(J,{\gamma})$ be a satellite with pattern $R$, companion $J$, infection curve ${\gamma}$ and winding number $0$. Let $\chi \colon H_1(\Sigma_2(R(J,{\gamma}));\Z) \to \Q/\Z$ be a character of prime power order $d$ and set $\omega:=e^{2\pi i/d}$. If we use ${\gamma}_1,{\gamma}_2$ to denote the lifts of~${\gamma}$ to the $2$--fold branched cover $\Sigma_2(R(J,{\gamma}))$, then
\begin{align*}
& \sigma(R(J,{\gamma}),\chi)=\sigma(R,\chi_R)+\left( \sigma_J(\omega^{\chi({\gamma}_1)})+\sigma_J(\omega^{\chi({\gamma}_2)})\right), \\
& \eta(R(J,{\gamma}),\chi)=\eta(R,\chi_R)+\left( \eta_J(\omega^{\chi({\gamma}_1)})+\eta_J(\omega^{\chi({\gamma}_2)})\right).
\end{align*}
Here, $\chi_R$ denotes the character on $H_1(\Sigma_2(R);\Z)$ induced by $\chi$; see Lemma~\ref{lem:CorrespondenceCharacters} for further details.
Furthermore, on connected sums, we have
\begin{align*}
& \sigma(K_1 \csum K_2,\chi_1 \oplus \chi_2)=\sigma(K_1,\chi_1)+\sigma(K_2,\chi_2),\\
& \eta(K_1 \csum K_2,\chi_1 \oplus \chi_2)
= \begin{cases}
\eta(K_1,\chi_1)+\eta(K_2,\chi_2) \quad &\text{ if one of the } \chi_i \text{ is trivial,} \\
\eta(K_1,\chi_1)+\eta(K_2,\chi_2)+1 \quad &\text{ if neither } \chi_i \text{ is trivial.}
\end{cases}
\end{align*}
\end{theorem}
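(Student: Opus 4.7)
The theorem has two independent parts: the satellite formula and the connected sum formula, both approached via bordism-theoretic computations.

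\textbf{Satellite formula.} The plan is to realize the infection operation as a $4$-dimensional cobordism and invoke Novikov additivity of the ordinary and twisted signatures. Since $\lk(\eta, R)=0$, the curve $\eta$ lifts to two disjoint simple closed curves $\eta_1, \eta_2 \subset \Sigma_2(R)$ and the branched cover decomposes as
\[ \Sigma_2(R(J,\eta)) = \big(\Sigma_2(R) \setminus \nu(\eta_1 \cup \eta_2)\big) \cup E(J)_1 \cup E(J)_2, \]
with each gluing identifying $\mu_{\eta_i} \leftrightarrow \lambda_J$ and $\lambda_{\eta_i} \leftrightarrow \mu_J$. Pick $r$ large enough that the bordism classes of $(\Sigma_2(R), \chi_R)$ and of $(M_J, \chi_i)$ vanish, where $\chi_i$ is the character on $M_J$ sending $\mu_J$ to $\omega^{\chi(\eta_i)}$, and choose $4$-manifolds $W_R$ and $W_{J,i}$ bounding them with chosen character extensions. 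The character $\chi$ extends across the infection gluing because $\mu_{\eta_i}$ is null-homologous in $\Sigma_2(R(J,\eta))$ (it bounds a disk in $\nu(\eta_i)$), matching the fact that $\lambda_J$ is null-homologous in $M_J$. Glue $W_R$ to $W_{J,1}$ and $W_{J,2}$ along thickened infection tori to produce a $4$-manifold $X$ bounding $r$ copies of $(\Sigma_2(R(J,\eta)), \chi)$; Novikov additivity then yields
\[ \sigma(R(J,\eta),\chi) = \sigma(R,\chi_R) + \sigma(M_J,\chi_1) + \sigma(M_J,\chi_2). \]
The classical identification $\sigma(M_J, \chi_i) = \sigma_J(\omega^{\chi(\eta_i)})$ via the $0$-framed handle trace $D^4 \cup h_J$ completes the signature formula, and the analogous Mayer-Vietoris computation with $\C^\chi$-coefficients, combined with the standard identification of the twisted $H_1$ of $E(J)$ with the Alexander module of $J$ evaluated at $\omega^{\chi(\eta_i)}$ (cf.\ \cite{NagelPowell}), yields the nullity formula.

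\textbf{Connected sum formula.} Use the identification $\Sigma_2(K_1 \csum K_2) = \Sigma_2(K_1) \csum \Sigma_2(K_2)$ and the induced splitting $\chi = \chi_1 \oplus \chi_2$. Boundary-connected-summing bounding $4$-manifolds for the two pieces gives additivity of both ordinary and twisted signatures, which is the signature formula. For the nullity, the Mayer-Vietoris sequence with $\C^\chi$-coefficients along the separating $S^2$ reads
\begin{multline*}
\cdots \to H_1(\Sigma_2(K_1); \C^{\chi_1}) \oplus H_1(\Sigma_2(K_2); \C^{\chi_2}) \to H_1(\Sigma_2(K_1 \csum K_2); \C^\chi) \\
\xrightarrow{\partial} \C \xrightarrow{f} H_0(\Sigma_2(K_1); \C^{\chi_1}) \oplus H_0(\Sigma_2(K_2); \C^{\chi_2}) \to \cdots
\end{multline*}
since $H_*(S^2;\C^\chi) = H_*(S^2;\C)$. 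If both $\chi_i$ are non-trivial, the codomain of $f$ vanishes, so $\partial$ is surjective and contributes an extra $\C$ summand to $H_1(\Sigma_2(K_1 \csum K_2); \C^\chi)$, giving the $+1$ term; if some $\chi_i$ is trivial, $f$ is injective and additivity is exact.

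\textbf{Main obstacle.} The technical heart of the argument is verifying that the infection cobordism contributes no Wall-type non-additivity correction to either the ordinary or twisted signature; this reduces to checking that the gluing tori carry trivial twisted intersection form, which holds because $\omega^{\chi(\eta_i)} \neq 1$ whenever the character is non-trivial on the corresponding torus. Once this cobordism signature computation is in place, everything else reduces to standard abelian-cover machinery.
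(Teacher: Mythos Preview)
Your approach coincides with the paper's: the satellite signature formula is proved in the appendix by building an explicit cobordism~$C$ (round $1$-handles attached to $\Sigma_2(R)\times I \sqcup M_J^1\times I \sqcup M_J^2\times I$) between $\Sigma_2(R)\sqcup M_J^1\sqcup M_J^2$ and $\Sigma_2(R(J,\eta))$, gluing bounding $4$-manifolds to the bottom, and checking $\operatorname{dsign}^\psi(C)=0$; the nullity uses Mayer--Vietoris; and the connected sum formulas are simply cited from Florens--Gilmer. Your ``glue along thickened infection tori'' is exactly this cobordism~$C$, though you should make the construction explicit, since $W_R$ and $W_{J,i}$ do not share any $3$-dimensional boundary piece to glue along until the round handles are inserted.

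There is, however, a genuine gap in your ``main obstacle'' paragraph and in the nullity sketch. You only treat the case where the character is non-trivial on a given infection torus. Nothing forces $\chi(\eta_i)\neq 0$; for instance $\eta_i$ may lie in the kernel of $\chi_R$. When $\chi(\eta_i)=0$ the twisted coefficients on $\partial\nu(\eta_i)$ and on $X_J^i$ degenerate to untwisted $\C$, and your argument that the gluing tori contribute nothing no longer applies. The paper handles this case separately: in the Mayer--Vietoris sequence for~$C$ one needs that $H_1(\eta_i\times D^2;\Z)\to H_1(M_J^i;\Z)$ is injective, which holds because $\eta_i\sim\mu_J^i$ hits the $\Z$ summand of $H_1(M_J;\Z)=H_1(\Sigma_1(J);\Z)\oplus\Z$; this uses the $0$-surgery structure and is invisible from the twisted side. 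The same dichotomy reappears in the nullity computation: when $\chi(\eta_i)=0$ one has $H_1(X_J^i;\C)\cong\C\langle\mu_J\rangle$, and this extra $\C$ must be cancelled against the meridian $\mu_{\eta_i}$ of the drilled torus rather than contributing a term $\eta_J(1)$. The paper's Proposition in the appendix carries out this cancellation carefully via two applications of an elementary homological lemma; without it your nullity formula would pick up spurious summands from the trivially-charactered tori.
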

\begin{proof}
The proof of the winding number zero satellite formula for the $\sigma$-invariant can be found in Corollary~\ref{cor:Abchir0Modn} below.
The satellite formula for the nullity is proved in Proposition~\ref{prop:CGNullitySatellite} below.
The connected sum formulas are known~\cite[Proposition~2.5]{FlorensGilmer}.
\end{proof}

\subsection{Casson-Gordon invariants and the stabilizing number} \label{sub:CGstabilizing}
The aim of this subsection is to use Casson-Gordon invariants in order to provide an obstruction to a knot having a given stabilizing number. The result and its proof resemble a result of Gilmer~\cite[Theorem 1]{GilmerGenus}, which we also need in Section~\ref{sub:NotGenus}. Gilmer's original theorem only makes use of the Casson-Gordon $\tau$ invariant; the following reformulation appears in~\cite[Theorem 4.1]{FlorensGilmer}.

\begin{theorem}[Gilmer]\label{thm:CGGenus}
If a knot $K$ bounds a genus $g$ locally flat embedded surface~$F \subset D^4$, then the linking form $\beta_K$ decomposes as a direct sum $\beta_1 \oplus \beta_2$ such that the following two conditions hold:
\begin{enumerate}
\item the linking form $\beta_1$ has an even presentation matrix of rank $2g$ and signature~$\sigma_K(-1)$;
\item there is a metabolizer $G$ of $\beta_2$ such that for all prime power characters $\chi \in G$, we have
\begin{equation}
\label{eq:GenusIneqThm}
 |\sigma(K,\chi)+\sigma_K(-1)| \leq \eta(K,\chi)+4g+1.
 \end{equation}
\end{enumerate}
\end{theorem}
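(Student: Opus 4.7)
The plan is to use the double branched cover $V := \Sigma_2(D^4, F)$ along the locally flat surface $F$, whose boundary is $\partial V = \Sigma_2(K)$. This $4$--manifold will simultaneously produce the desired splitting of $\beta_K$ (via its intersection form presenting part of the linking form) and the bound on Casson-Gordon invariants (via extensions of characters from $\partial V$ to $V$).

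First I would establish the basic properties of $V$. Smith theory and Euler characteristic computations give $b_2(V) = 2g$ and show that $H_1(V;\Z)$ is finite. Viro's signature formula for branched double covers --- valid for locally flat surfaces via Wall's $G$--signature theorem --- yields $\sign(V) = \sigma_K(-1)$. Since $F$ is orientable with a single boundary component, the intersection form $Q_V$ is even (equivalently, $V$ is spin).

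Next, the long exact sequence of the pair $(V, \partial V)$ combined with Poincaré--Lefschetz duality yields
\[
H_2(V;\Z) \xrightarrow{Q_V} H_2(V, \partial V;\Z) \cong H^2(V;\Z) \to H_1(\partial V;\Z) \to H_1(V;\Z) \to 0.
\]
Restricting $Q_V$ to the free part of $H_2(V;\Z)$ produces a direct summand $\beta_1$ of $\beta_K$ whose presentation is the integer matrix $Q_V$; this establishes condition (1) with rank $2g$, signature $\sigma_K(-1)$, and evenness. The complementary summand $\beta_2$ admits $G := \ker\bigl(H_1(\partial V;\Z) \to H_1(V;\Z)\bigr)$ as a metabolizer, which follows from Poincaré duality together with a standard half-lives-half-dies diagram chase.

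For condition (2), fix a prime-power order character $\chi \in G$, so that $\chi$ extends over $V$ to a character $\tilde\chi : \pi_1(V) \to \Z_d$. Taking $(W,\psi) = (V, \tilde\chi)$ and $r=1$ in Definition~\ref{def:CassonGordonSigma} gives
\[
\sigma(K,\chi) + \sigma_K(-1) = \sign^{\tilde\chi}(V).
\]
A twisted Euler-characteristic computation using the long exact sequence of the pair $(V, \partial V)$ with $\C^{\tilde\chi}$ coefficients --- essentially the bookkeeping underlying the proof of Theorem~\ref{thm:ImprovedGenusBound} --- bounds $|\sign^{\tilde\chi}(V)| \leq 4g + \eta(K,\chi) + 1$, completing the proof. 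The main obstacle is the final step: one must carefully compare $\dim_\C H_2(V;\C^{\tilde\chi})$ with $2 b_2(V) = 4g$ plus the boundary contribution $\eta(K,\chi)$, isolating the $+1$ correction that reflects the behaviour of $\tilde\chi$ on $H_0$ and on the torsion of $H_1(V;\Z)$.
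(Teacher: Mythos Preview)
The paper does not itself prove this theorem; it is quoted from Gilmer and Florens--Gilmer. However, the paper does prove the analogous Theorem~\ref{thm:CGStabilising} following Gilmer's method, so one can compare against that.

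Your treatment of condition~(1) is essentially Gilmer's: the branched cover $V=\Sigma_2(D^4,F)$ is spin with $b_2(V)=2g$ and $\sign(V)=\sigma_K(-1)$, and Lemma~\ref{lem:SplitLinkingForm} produces the splitting $\beta_K=\beta_1\oplus\beta_2$ together with the metabolizer~$G$ of characters that extend over~$V$. So far so good.

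The gap is in your argument for condition~(2). You propose to bound $|\sign^{\tilde\chi}(V)|$ directly by an Euler-characteristic count. But $\chi(V)=1+2g$, not $4g$, and to pass from $\chi(V)$ to $b_2^{\tilde\chi}(V)$ you must control $b_1^{\tilde\chi}(V)$ and $b_3^{\tilde\chi}(V)$. Even though $H_1(V;\Q)=0$, there is no a~priori bound on $\dim_\C H_1(V;\C^{\tilde\chi})$ for a nontrivial prime-power character~$\tilde\chi$: this is an eigenspace of the homology of a further finite cyclic cover of~$V$, and nothing in your outline constrains it. The ``$+1$ correction'' you allude to does not arise from $H_0$ (which vanishes for nontrivial~$\tilde\chi$), and the $4g$ does not arise from $\chi(V)$.

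Gilmer's actual route, reproduced in the proof of Theorem~\ref{thm:CGStabilising}, sidesteps this by passing through the Witt-class invariant~$\tau(K,\chi)$. One first uses the Casson--Gordon estimate \cite[Theorem~3]{CassonGordon2},
\[
|\sigma(K,\chi)-\sign_1^{\op{av}}\tau(K,\chi)|\leq 1+\eta(K,\chi),
\]
which is where the $\eta(K,\chi)+1$ enters. One then computes $\tau(K,\chi)$ using the \emph{unbranched} double cover $W_2(F)$ of the surface exterior, equipped with $\C(t)$ coefficients coming from the $\Z\times\Z_d$ representation. The crucial point is that over $\C(t)$, Lemma~\ref{lem:FiniteDimensional} (an infinite-cyclic-cover argument) forces $\beta_i^{\C(t)}(W_2(F))=0$ for $i\neq 2$, whence $\beta_2^{\C(t)}(W_2(F))=\chi(W_2(F))=2\chi(D^4\setminus\nu F)=4g$. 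This is the source of the~$4g$. Your direct approach with finite-cyclic twisted coefficients on the branched cover has no analogue of this vanishing, and so the bound does not go through as stated.
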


In order to obtain the corresponding result for stabilizing numbers,  we start by stating four lemmas: the two first of which are due to Gilmer. 
The first lemma is crucial to extending characters on a $3$--manifold to a bounding $4$--manifold.

\begin{lemma}\label{lem:SplitLinkingForm}
If $M$ bounds a spin $4$--manifold $W$, then $\beta_M=\beta_1 \oplus \beta_2$, where~$\beta_2$ is metabolic and $\beta_1$ has an even presentation matrix of
rank $\dim_\Q H_2(W;\Q)$ and signature $\sign(W)$. Moreover, the set of characters that extend to $H_1(W;\Z)$ forms a metabolizer of $\beta_2$.
\end{lemma}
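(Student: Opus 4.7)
The plan is to use the long exact sequence of the pair $(W, M)$ together with Poincaré--Lefschetz duality to relate $\beta_M$ to the intersection form $Q_W$ on $V := H_2(W;\Z)/\text{tors}$. First I would use the identification $H_2(W,M;\Z) \cong H^2(W;\Z)$ to identify the free part of $H_2(W,M;\Z)$ with $V^* := \op{Hom}(V,\Z)$, and observe that the composition $V \hookrightarrow H_2(W;\Z) \xrightarrow{j} H_2(W,M;\Z) \twoheadrightarrow V^*$ is the adjoint of $Q_W$. The assumption that $M$ is a rational homology sphere (implicit in the nonsingularity of $\beta_M$) makes $Q_W$ nondegenerate of rank $n := \dim_\Q H_2(W;\Q)$; the spin hypothesis makes it even.

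Next I would define $\beta_1$ to be the discriminant form of $Q_W$, namely the nonsingular $\Q/\Z$-valued symmetric pairing on $V^*/Q_W V$ given by $(\bar{f}, \bar{g}) \mapsto f^T Q_W^{-1} g \pmod{\Z}$, which is by construction presented by the even matrix $Q_W$ of size $n$ and has signature $\sign(W)$. Via the connecting map $\partial$, the group $V^*/Q_W V$ embeds into $H_1(M;\Z)$, and a direct comparison of pairings identifies the restriction of $\beta_M$ to this image with $\beta_1$. For the metabolizer claim I would use exactness: since $\op{im}(\partial) = \ker\bigl(H_1(M;\Z) \to H_1(W;\Z)\bigr)$, a character of $H_1(M;\Z)$ extends to $H_1(W;\Z)$ if and only if it vanishes on $\op{im}(\partial)$, and a count using nonsingularity of $\beta_M$ then shows that the extending characters have the correct size to form a metabolizer of the complementary summand $\beta_2$.

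The hard part will be promoting the filtration coming from the exact sequence into an actual direct sum decomposition $\beta_M = \beta_1 \oplus \beta_2$ of linking forms, since subforms of finite linking forms do not in general admit orthogonal complements. I expect to handle this by working prime-by-prime on the primary decomposition of $H_1(M;\Z)$, using that evenness of $Q_W$ combined with nonsingularity of $\beta_M$ forces the required complementary summand to exist.
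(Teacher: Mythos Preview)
The paper does not prove this lemma; it simply cites Gilmer~\cite[Lemma~1]{GilmerGenus}. Your outline is essentially Gilmer's argument, so there is no meaningful difference in approach to report.

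That said, you have the difficulty slightly backwards. Once you know that the subgroup $G_1\subset H_1(M;\Z)$ carries a \emph{nonsingular} restriction of $\beta_M$, the orthogonal splitting $H_1(M;\Z)=G_1\oplus G_1^\perp$ is automatic: nonsingularity gives $G_1\cap G_1^\perp=0$, and the surjection $H_1(M;\Z)\to\op{Hom}(G_1,\Q/\Z)$ with kernel $G_1^\perp$ gives $|G_1|\cdot|G_1^\perp|=|H_1(M;\Z)|$. No prime-by-prime analysis or evenness is needed for this step; evenness only enters to say the presentation of $\beta_1$ is even. The genuine care is required earlier, in your ``direct comparison of pairings'': you must show that the image of the free part $V^*$ under $\partial$ is well defined and carries exactly the discriminant form of $Q_W$, which is delicate when $H_2(W,M;\Z)\cong H^2(W;\Z)$ has torsion. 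Relatedly, note that in your metabolizer paragraph you use the full image $\op{im}(\partial)=\ker\big(H_1(M;\Z)\to H_1(W;\Z)\big)$, whereas the subgroup supporting $\beta_1$ is the image of the free part; these can differ, and reconciling them (so that $(\op{im}\partial)^\perp\subset G_1^\perp$ is isotropic of the correct order for $\beta_2$) is where the actual work lies in Gilmer's proof.
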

\begin{proof}
See~\cite[Lemma~1]{GilmerGenus}.
\end{proof}

The second lemma ensures that the first homology group of certain infinite cyclic covers remains finite dimensional.
\begin{lemma}\label{lem:FiniteDimensional}
Let $X$ be a connected infinite cyclic cover of a finite complex $Y$ and~$\widetilde{X}$ a $p^r$ cyclic cover of $X$ for a prime $p$. If $H_k(Y;\Z)=0$, then $H_k(\widetilde{X};\Q)$ is finite dimensional. If $H_1(Y;\Z)=\Z$, then $H_1(\widetilde{X},\Q)$ is finite dimensional.
\end{lemma}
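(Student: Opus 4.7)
My plan is to apply the Milnor--Wang long exact sequence of the infinite cyclic cover $X \to Y$ to control $H_k(X;\Q)$, and then bootstrap to $\widetilde X$ by exhibiting it (possibly after passing to a further finite cover) as an infinite cyclic cover of a different finite CW complex.

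Write $\Lambda = \Q[t^{\pm 1}]$ for the deck-group ring, with $t$ the generator of the deck group of $X\to Y$. Since $Y$ is a finite CW complex, each $H_k(X;\Q)$ is finitely generated over the PID $\Lambda$, and is finite-dimensional over $\Q$ exactly when it is $\Lambda$-torsion. Equivalently, $t-1$ must be surjective, since no nonzero free $\Lambda$-summand admits a surjection by $t-1$. The Milnor--Wang sequence
\[
\cdots \to H_{k+1}(Y;\Q) \to H_k(X;\Q) \xrightarrow{t-1} H_k(X;\Q) \to H_k(Y;\Q) \to \cdots
\]
settles both cases. If $H_k(Y;\Z)=0$, then $H_k(Y;\Q)=0$ and exactness makes $t-1$ surjective on $H_k(X;\Q)$ directly. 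If $H_1(Y;\Z)=\Z$, then because $X$ is connected $t$ acts trivially on $H_0(X;\Q) \cong \Q$, so exactness in degree zero forces the connecting map $H_1(Y;\Q)\cong \Q \to H_0(X;\Q) \cong \Q$ to be surjective between one-dimensional spaces, hence an isomorphism; this forces $H_1(X;\Q) \to H_1(Y;\Q)$ to vanish and $t-1$ to be surjective on $H_1(X;\Q)$.

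To handle $\widetilde X$, the cover $\widetilde X \to X$ is classified by a character $\chi\colon \pi_1(X) \to \Z_{p^r}$, and the conjugation action of $\pi_1(Y)/\pi_1(X) \cong \Z$ on the finite abelian group $\op{Hom}(H_1(X;\Z),\Z_{p^r})$ has finite orbits. Intersecting the $\pi_1(Y)$-conjugates of $\ker\chi$ produces a finite-index subgroup of $\pi_1(X)$ that is normal in $\pi_1(Y)$; the associated further cover $\widetilde X' \to Y$ is regular with abelian deck group $\Z \oplus A$ for some finite abelian $p$-group $A$. Then $\widehat Y := \widetilde X'/\Z$ is a finite CW cover of $Y$, and $\widetilde X' \to \widehat Y$ is an infinite cyclic cover. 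Because $|A|$ is invertible over $\Q$, Maschke's theorem decomposes $H_k(\widetilde X';\Q)$ into finitely many isotypic pieces under the $A$-action, each identifying with a twisted homology $H_k(X;\Q_\lambda)$ for a character $\lambda$ of $A$ that extends to $\pi_1(Y)$. Applying the twisted analogue of the Wang argument above to each isotypic component then yields finite-dimensionality of $H_k(\widetilde X';\Q)$, and since $\widetilde X$ is an intermediate cover between $\widetilde X'$ and $X$, its rational homology is a quotient of $H_k(\widetilde X';\Q)$ via the Cartan--Leray spectral sequence (which collapses over $\Q$), and is therefore also finite-dimensional.

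The main obstacle will be verifying that the hypotheses on $H_*(Y;\Z)$ propagate to give the correct vanishing or rank-one statements for twisted $H_k(Y;\Q_\lambda)$ with nontrivial $\lambda$. This is the standard bookkeeping in the Casson--Gordon/Gilmer framework~\cite{GilmerGenus} and follows from the fact that the intermediate cyclic cover of $Y$ classified by $\lambda$ is itself a finite CW complex, so that its rational homology is finite-dimensional and the transfer map controls the twisted groups via the untwisted hypothesis on $Y$.
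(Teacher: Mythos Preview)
The paper does not give a proof; it simply cites \cite[Lemma~2]{GilmerGenus}. Your Wang--sequence argument for the finite-dimensionality of $H_k(X;\Q)$ (and the $k=1$ variant) is correct and is indeed the first step of Gilmer's proof.

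The gap is in your bootstrap to $\widetilde X$. First, a minor issue: the deck group of $\widetilde X'\to Y$ is in general a semidirect product $\Z\ltimes A$, not $\Z\oplus A$; you must pass to a further finite cyclic cover of $Y$ to make the $\Z$-action on $A$ trivial. More seriously, your final paragraph does not actually resolve the obstacle you correctly identify. The appeal to the transfer map goes the wrong direction: transfer exhibits $H_k(Y;\Q)$ as a summand of $H_k(\widehat Y;\Q)$, not conversely, so it cannot deduce the vanishing of $H_k(Y;\Q_\lambda)$ from $H_k(Y;\Z)=0$. Without that vanishing your twisted Wang sequence only shows the cokernel of $t-1$ on $H_k(\widetilde X';\Q)$ is finite-dimensional, which does not force the module to be $\Q[t^{\pm1}]$-torsion (already $\Q[t^{\pm1}]$ itself has one-dimensional $(t-1)$-cokernel). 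Gilmer sidesteps this entirely by passing to $\F_p$ coefficients: since $\F_p[\Z_{p^r}]=\F_p[s]/(s-1)^{p^r}$ is local with nilpotent maximal ideal, the filtration by powers of $(s-1)$ has all subquotients isomorphic to the trivial module $\F_p$, and iterating the long exact sequence gives $\dim_{\F_p}H_k(\widetilde X;\F_p)\le p^r\dim_{\F_p}H_k(X;\F_p)$. The right-hand side is finite by the same Wang argument run over $\F_p$ (which, for the case $k=1$ with $H_1(Y;\Z)=\Z$ actually used in the paper, goes through verbatim), and then $\dim_\Q H_k(\widetilde X;\Q)\le\dim_{\F_p}H_k(\widetilde X;\F_p)$ by universal coefficients.
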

\begin{proof}
See~\cite[Lemma 2]{GilmerGenus}.
\end{proof}

The third lemma ensures that we will be able to apply Lemma~\ref{lem:SplitLinkingForm} in the context of stabilizing numbers. 
\begin{lemma}\label{lem:Spin}
Let $D$ be a properly embedded locally flat  disk in~$D^4 \csum n S^2 \times S^2$ and let $\Sigma_2(D)$ denote the $2$-fold cover of $D^4 \csum n S^2 \times S^2$ branched along $D$. If $D$ is nullhomologous, then the branched cover $\Sigma_2(D)$ is spin.
\end{lemma}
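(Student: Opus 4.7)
The strategy is to produce a spin structure on $\Sigma_2(D)$ by lifting one from $X := D^4 \csum n S^2 \times S^2$ and extending across the branch locus. First, $X$ is spin, being a boundary connected sum of the spin manifolds $D^4$ and $S^2 \times S^2$; equivalently, $X$ is simply connected with even intersection form.

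Denote the branched covering map by $\pi \colon \Sigma_2(D) \to X$, and let $X_D := X \setminus \nu(D)$. Since $H_1(X;\Z) = 0$ and $[D,\partial D] = 0 \in H_2(X,\partial X;\Z)$, Lemma~\ref{lem:FreelyGenerated} applied with $F = D$ yields $H_1(X_D;\Z) \cong \Z$, generated by a meridian $\mu$ of $D$; reduction modulo~$2$ of the resulting homomorphism classifies the unbranched double cover $\pi^{-1}(X_D) \to X_D$. The spin structure on $X$ restricts to $X_D$ and pulls back via $\pi$ to a spin structure on $\pi^{-1}(X_D)$.

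It remains to extend this spin structure across the branch locus~$\widetilde{D} := \pi^{-1}(D)$. Because~$D$ is contractible, its normal bundle in $X$ is trivial, hence so is the normal bundle of $\widetilde{D}$ in $\Sigma_2(D)$, and a closed tubular neighborhood of $\widetilde{D}$ is homeomorphic to $\widetilde{D} \times D^2$. Since this neighborhood is contractible, the spin structure on its boundary $\widetilde{D} \times S^1$ extends over it if and only if its restriction to a meridional circle $\lbrace \pt \rbrace \times S^1$ is the bounding spin structure on $S^1$. Now $\pi$ restricts on such a circle to the degree-$2$ map $S^1 \to S^1$, $z \mapsto z^2$, covering the meridian $\mu \subset X$. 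This meridian bounds a disk fiber of $\nu(D) \subset X$, so it carries the bounding spin structure, and the pullback of this spin structure under the degree-$2$ map is again the bounding spin structure, since it extends over the branched cover $D^2 \to D^2$, $z \mapsto z^2$, of the bounding disk. Consequently the spin structure extends to $\Sigma_2(D)$, which is therefore spin.

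The main point to get right is the comparison of spin structures under the degree-$2$ cover of the meridional circle; all other steps are formal (spin structures restrict, pull back along covers, and extend over contractible neighborhoods). The key geometric input, namely that $[D,\partial D]=0$, enters only through Lemma~\ref{lem:FreelyGenerated} to ensure the double cover $\pi$ exists.
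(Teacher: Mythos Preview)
Your argument contains a genuine error at the key step. You claim that the bounding spin structure on the downstairs meridian~$\mu$ pulls back, under the degree-$2$ cover $p\colon \widetilde\mu\to\mu$, to the bounding spin structure on~$\widetilde\mu$. This is false: in fact both spin structures on~$\mu$ pull back to the \emph{non-bounding} (Lie group) spin structure on~$\widetilde\mu$. One way to see this is that $p^*\colon H^1(S^1;\Z_2)\to H^1(S^1;\Z_2)$ is the zero map (it is multiplication by~$2$), so the two spin structures downstairs have the same pullback; and the Lie group framing visibly pulls back to the Lie group framing, so that common pullback is the non-bounding one. More geometrically, the identification $T\Sigma_2(D)|_{\widetilde\mu}\cong p^*(TX|_\mu)$ is furnished by $d\pi$, and in the local model $(x,z)\mapsto(x,z^2)$ one has $d\pi_{(x,z)}(u,v)=(u,2zv)$; the factor $2z$ contributes a loop of winding number~$1$ in $SO(2)\subset SO(4)$, which is the nontrivial element of $\pi_1(SO(4))=\Z_2$. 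Hence the pulled-back spin structure on $\Sigma_2(D)\setminus\widetilde D$ does \emph{not} extend across~$\widetilde D$. Your justification ``it extends over the branched cover $D^2\to D^2$'' does not help: that map fails to be a local diffeomorphism at the origin, so one cannot transport the spin structure through the branch point via~$d\pi$.

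The paper sidesteps this issue entirely. Rather than attempting to construct a spin structure by pullback and extension, it computes $w_2(\Sigma_2(D))$ directly using a result of Gilmer which provides a complex line bundle~$E$ with $c_1(E)=0$ (this is where the hypothesis $[D,\partial D]=0$ enters) and a stable isomorphism $T\Sigma_2(D)\oplus E^d\cong\pi^*\bigl(T(D^4\csum nS^2\times S^2)\oplus E\bigr)$; the Whitney product formula then yields $w_2(\Sigma_2(D))=0$. Your approach can be repaired, but it requires producing a class in $H^1\bigl(\Sigma_2(D)\setminus\nu(\widetilde D);\Z_2\bigr)$ that restricts nontrivially to~$\widetilde\mu$, so that one may twist the pulled-back spin structure into one that does extend; establishing the existence of such a class is additional work you have not done.
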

\begin{proof}
Since $\Sigma_2(D)$ is oriented, it is enough to show that the second Stiefel-Whitney class $w_2(\Sigma_2(D))$ vanishes.
Note that $n S^2 \times S^2$ is spin, since it is simply connected and
has even intersection form. Consequently, the manifold~$D^4 \csum nS^2 \times~S^2$ is also spin, and its second Stiefel-Whitney class vanishes:
\begin{equation}\label{eq:w2S2S2}
w_2(D^4 \csum n S^2 \times S^2)=0.
\end{equation}
Next, consider the two-fold branched cover~$\pi \colon \Sigma_2(D) \to D^4 \csum n S^2 \times S^2$. 
Since $D$ is nullhomologous, a result due to Gilmer~\cite[Theorem~7]{GilmerSignatures} ensures the existence of
a complex line bundle~$E$ with Chern class $c_1(E) =~0$ such that
\begin{equation}
\label{eq:BundleForw2}
T \Sigma_2(D) \oplus E^d = \pi^* \big( T (D^4 \csum n S^2 \times S^2) \oplus E\big).
\end{equation}
Note that~$w_2(E) = c_1(E)=0 \mod 2$ and~$w_1(E) = 0$ since~$E$ is orientable. This implies that both $w_2(E^d)$ and $w_1(E^d)$ vanish. 
Using the Whitney product formula and the naturality of the characteristic classes, in~\eqref{eq:BundleForw2}, and using~\eqref{eq:w2S2S2} yields
\[ w_2(\Sigma_2(D)) + 0 + w_1( \Sigma_2(D) )\cup 0 = \pi^*\big( w_2(D^4 \csum n S^2 \times S^2) + 0 + 0 \big) = 0.\]
Thus~$w_2(\Sigma_2(D)) = 0$ and therefore $\Sigma_2(D)$ is spin, as announced.
\end{proof}

The next lemma computes the second Betti number of $\Sigma_2(D)$.
\begin{lemma}\label{lem:SndHomology}
If $D$ is a nullhomologous properly embedded disk in~$D^4 \csum n S^2 \times S^2$, then
	\[\dim_\Q H_2(\Sigma_2(D);\Q)=4n.\]
\end{lemma}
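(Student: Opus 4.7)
My plan is to compute $\chi(\Sigma_2(D))$, use Poincar\'e--Lefschetz duality together with the boundary being a rational homology sphere to reduce to showing $b_1(\Sigma_2(D);\Q)=0$, and then establish this vanishing via a Mayer--Vietoris argument on an explicit decomposition of the branched cover.

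Since $\pi\colon \Sigma_2(D)\to W$ is a $2$-fold cover branched along the disk $D$, standard CW arguments yield $\chi(\Sigma_2(D))=2\chi(W)-\chi(D)=2(2n+1)-1=4n+1$, using that $\chi(A\csum B)=\chi(A)+\chi(B)-2$ for $4$-manifolds gives $\chi(W)=1+4n-2n=2n+1$. The manifold $\Sigma_2(D)$ is connected with boundary $\Sigma_2(K)$, a rational homology $3$-sphere. Hence $b_0=1$, $b_4=0$, and Poincar\'e--Lefschetz duality combined with the long exact sequence of the pair $(\Sigma_2(D),\partial\Sigma_2(D))$ yields $b_1=b_3$. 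Therefore it suffices to prove $b_1(\Sigma_2(D);\Q)=0$, from which $b_2=4n$ follows by Euler-characteristic arithmetic.

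To show $b_1=0$, I would decompose $\Sigma_2(D)=\widetilde{W_D}\cup_{D\times S^1}(D\times D^2)$, where $\widetilde{W_D}\to W_D:=W\setminus\nu D$ is the unbranched double cover corresponding to the surjection $H_1(W_D;\Z)\cong\Z\twoheadrightarrow\Z_2$. The isomorphism $H_1(W_D;\Z)\cong\Z$, generated by the meridian of $D$, follows from the long exact sequence of the pair $(W,W_D)$ together with the nullhomologous hypothesis, since by excision $H_2(W,W_D;\Z)\cong\Z$ and the connecting map $H_2(W;\Z)\to\Z$ is algebraic intersection with $[D]$, which vanishes. A Mayer--Vietoris computation then identifies $H_1(\Sigma_2(D);\Q)$ with the cokernel of the map $\Q=H_1(D\times S^1;\Q)\to H_1(\widetilde{W_D};\Q)$ that sends the generator to the lifted meridian $\widetilde{\mu}$. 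Under the eigenspace decomposition for the covering involution on $H_1(\widetilde{W_D};\Q)$, the class $\widetilde{\mu}$ generates the $+1$-eigenspace, which coincides with the transfer image $H_1(W_D;\Q)\cong\Q$. Hence the cokernel is the $-1$-eigenspace, which is canonically identified with the twisted homology $H_1(W_D;\Q_-)$ with sign local system.

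The main obstacle is then establishing $H_1(W_D;\Q_-)=0$. My plan would combine three ingredients: (i) the vanishing $H_*(\partial W_D;\Q_-)=0$, which holds because $\partial W_D\cong M_K = S^3_0(K)$ and the Alexander polynomial satisfies $\Delta_K(-1)=\det(K)\neq 0$ for any knot $K$; (ii) Poincar\'e--Lefschetz duality for $W_D$ with twisted coefficients, which in view of (i) and the long exact sequence of $(W_D,\partial W_D)$ yields the self-duality $H_k(W_D;\Q_-)\cong H_{4-k}(W_D;\Q_-)^*$; and (iii) the Euler-characteristic identity $\chi(W_D;\Q_-)=\chi(W_D)=2n$ together with the automatic vanishings $H_0(W_D;\Q_-)=H_4(W_D;\Q_-)=0$. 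Combined with a direct cellular analysis using a handle decomposition of $W$ (one $0$-handle and $2n$ $2$-handles) made transverse to $D$, these constraints force $b_1^-=b_3^-=0$ and $b_2^-=2n$, and the result follows.
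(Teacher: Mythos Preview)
Your overall strategy is essentially the paper's: both reduce the computation to showing that the $(-1)$--eigenspace $H_1(W_D;\Q_-)$ vanishes, by combining an Euler-characteristic count with Poincar\'e--Lefschetz duality.  Your preliminary reductions and observations (i)--(iii) are correct and match what the paper establishes.  However---and you implicitly acknowledge this---the constraints $b_0^-=b_4^-=0$, $b_1^-=b_3^-$, and $b_2^--2b_1^-=2n$ only yield $b_2^-=2n+2b_1^-$; they do \emph{not} by themselves force $b_1^-=0$.

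The genuine gap is the appeal to a ``direct cellular analysis using a handle decomposition of $W$ made transverse to $D$.''  This is not an argument: the exterior $W_D$ does not inherit any evident handle or cell decomposition from the standard one on $W=D^4\csum n\,S^2\times S^2$, and even if one were produced, computing $H_1(W_D;\Q_-)$ would require understanding how the sign local system interacts with the attaching maps---information that depends on the particular (arbitrary) nullhomologous disk $D$.  The paper closes exactly this gap by invoking estimates of Gilmer~\cite{GilmerConfiguration}, namely
\[
b_1^-\big(W_2(D)\big)\ \leq\ \dim_{\Z_2}H_1(W_D;\Z_2)-1\ =\ 0,
\]
which is the substantive missing input.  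Without this inequality (or the companion bound $b_2^-\leq \dim_{\Z_2}H_2(W_D;\Z_2)=2n$), your argument is incomplete at precisely the point where the real content lies.
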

\begin{proof}
This is a homological calculation.
Denote the $2$--fold unbranched cover of the disk exterior~$W_D$ by~$W_2(D)$.
The action of the deck transformation group of~$W_2(D)$ induces an automorphism of order~$2$ on $H_k(W_2(D);\C)$. 
Decompose~$H_k(W_2(D);\C)$ into the corresponding eigenspaces~$\op{Eig}(H_k(W_2(D);\C), \pm 1)$. The resulting Betti numbers are denoted $b_k^{\pm}(W_2(D))$. The idea of the proof is to compute $b_2^{\pm}(W_2(D))$ in order to deduce the value of $b_2(W_2(D))$ and $b_2(\Sigma_2(D))$.

For $\omega=\pm 1$, we saw in Lemma~\ref{lem:Eigenspaces} that the $\omega$--eigenspace $\op{Eig}(H_k(W_2(D);\C), \omega)$ is isomorphic to the vector space $H_k(W_D;\C^\omega)$. For $\omega=1$, observe that $\C^1$ is the~$\C[\Z_2]$--module~$\C$ with the trivial~$\Z_2$--action and therefore
\[ \op{Eig}(H_k(W_2(D); \C), +1)  \cong H_k(W_2(D); \C^{1}) \cong H_k(W_D; \C).  \]
In particular, we deduce that $b_2^{+}(W_2(D))=b_2(W_D)=2n$. 

Two estimates of Gilmer~\cite[Propositions 1.4 and 1.5]{GilmerConfiguration} imply that
\begin{align} \label{eq:Ineq}
	b_2^{-}(W_2(D)) &\leq \dim_{\Z_2}H_2(W_D;\Z_2)= 2n, \\
	b_1^{-}(W_2(D)) &\leq \dim_{\Z_2}H_1(W_D;\Z_2)-1= 0. \nonumber
\end{align}

Next, denote the alternating sum of the Betti numbers~$b_k^{\pm}$ by $\chi^{\pm}$.
An application of~\cite[Proposition 1.1]{GilmerConfiguration} ensures that 
\begin{equation}\label{eq:EulerCharc}
\chi^{-}(W_2(D))=\chi^{+}(W_2(D))=\chi(W_D) = 1-1+2n=2n. 
\end{equation}
Since~$b_0^{+}(W_2(D)) = b_0(W_D) = 1$, one sees that $b_0^{-}(W_2(D))=0$.
Analogously, we show that~$b_0^{-}(\partial W_2(D)) = 0$.
Using~\eqref{eq:Ineq}, we know that $b_1^{-}(W_2(D))=0$, and this implies that~$b_3^-(W_2(D))=0$: the long exact sequence gives $b_1^{-}(W_2(D),\partial W_2(D))=~0$, then use Poincar\'e duality and universal coefficents. 
Equation~\eqref{eq:EulerCharc} now implies that
\[ b_2^{-}(W_2(D))=\chi^{-}(W_2(D))=2n.\]
We therefore obtained that~$b_2(W_2(D)) = b_2^{+}(W_2(D))+b_2^{-}(W_2(D)) = 4n$.
The lemma follows from the Mayer-Vietoris sequence for $\Sigma_2(D) = W_2(D) \cup (D \times D^2)$. 
\end{proof}

The following theorem, which is Theorem~\ref{thm:CGstabilizingIntro} from the introduction, provides an obstruction to the stabilizing number that involves the Casson-Gordon invariants.
Its proof is similar to the proof of~\cite[Theorem~1]{GilmerGenus}; see also~\cite[Theorem~4.1]{FlorensGilmer}.
\begin{theorem}\label{thm:CGstabilizing}
If a knot $K$ bounds a locally flat embedded disk $D$ in~$D^4 \csum n S^2 \times~S^2$, then the linking form $\beta_K$ can be written as a direct sum $\beta_1 \oplus \beta_2$ such that
\begin{enumerate}
\item $\beta_1$ has an even presentation matrix of rank $4n$ and signature $\sigma_K(-1)$;
\item there is a metabolizer of $\beta_2$ such that for all characters of prime power order in this metabolizer, 
\[ |\sigma(K,\chi)+\sigma_K(-1)| \leq \eta(K,\chi)+4n+1. \]
\end{enumerate}
\end{theorem}

\begin{remark}\label{rem:WeakerThan4Genus}
The obstruction in Theorem~\ref{thm:CGstabilizing} is weaker than the corresponding result for the $4$--genus stated in Theorem~\ref{thm:CGGenus}.
In Theorem~\ref{thm:CGstabilizing}, the form $\beta_1$ is presented by a matrix of rank $4n$, 
while in the $4$--genus estimate, the form~$\beta_1$ is presented by a matrix of rank $2g$.
\end{remark}

\begin{proof}[Proof of Theorem~\ref{thm:CGstabilizing}]
Assume that $K$ bounds a nullhomologous embedded disk~$D$ in the stabilized $4$--ball $D^4 \csum nS^2 \times~S^2$, 
and let $W_D$ denote its exterior. 
Let $\Sigma_2(D)$ be the corresponding $2$--fold branched cover. 

Lemma~\ref{lem:Spin} shows that~$W$ is spin, and Lemma~\ref{lem:SndHomology} computes that~$H_2(W;\Q)$ has dimension~$4n$.
We apply Lemma~\ref{lem:SplitLinkingForm} to $M=\Sigma_2(K)$ with the bounding $4$--manifold being $W=~\Sigma_2(D)$ to obtain the splitting of the linking form~$\beta_K$. 
Deduce that the linking form $\beta_K$ splits as a direct sum $\beta_1 \oplus \beta_2$, where $\beta_2$ is metabolic and~$\beta_1$ has an even presentation of rank $4n$ and signature~$\sign(\Sigma_2(D))$. 

Let $G$ denote the metabolizer of $\beta_2$ that consists of those characters of the group~$H_1(\Sigma_2(K);\Z)$ that extend to characters of $H_1(\Sigma_2(D);\Z)$. For such a character~$\chi \in G$, in the remainder of this proof, we will establish that 
\begin{equation}\label{eq:GoalCG}
	|\sigma(K,\chi)+\sigma_K(-1)| \leq \eta(K,\chi)+4n+1.
\end{equation}
Let $M_2(K)$ denote the $2$--fold cover of 
$M_K$, the $0$--surgery of~$K$. 
Recall that the character $\chi$ on $H_1(\Sigma_2(K);\Z)$ gives rise to a character on $H_1(M_2(K);\Z)$.  
This was used to define the 
Witt class $\tau(K,\chi)$. Representing $\tau(K,\chi)$ by a Hermitian matrix~$A(t)$, setting $t=1$, and taking the averaged signature gives rise to a rational number $\sign_1^{\op{av}}(\tau(K,\chi))$; see~\cite[discussion preceding Theorem 3]{CassonGordon2} for details.
 Using consecutively the triangle inequality and an estimate~\cite[Theorem 3]{CassonGordon2}, 
we obtain
\begin{align*}
 |\sigma(K,\chi)+\sigma_K(-1)|&\leq |\sigma(K,\chi)-\sign_1^{\op{av}}(\tau(K,\chi))|+|\sign_1^{\op{av}}(\tau(K,\chi))+\sigma_K(-1)| \\
 &\leq  1+\eta(K,\chi)+|\sign_1^{\op{av}}(\tau(K,\chi))+\sigma_K(-1)|.
 \end{align*}
As a consequence,~\eqref{eq:GoalCG} will follow once we establish $|\sign_1^{\op{av}}(\tau(K,\chi))+\sigma_K(-1)| \leq~4n$.
Use $d$ to denote the order of $\chi$.
Recall that $\tau(K,\chi)=\tau(M_2(K),\alpha \times \chi)$ is 
defined as a difference of two Witt classes that arise from
a bounding $4$-manifold for $M_2(K)$ over which $\alpha \times \chi$ extends.

We claim that~$W_2(D)$ is such a bounding $4$--manifold.
Clearly, $M_2(K)$ bounds $W_2(D)$ and so we check that the representation extends. Since $\chi$ belongs to $G$, it extends to $H_1(\Sigma_2(D);\Z)$ and therefore to $H_1(W_2(D);\Z)$. On the other hand, recall that $\alpha \colon H_1(M_2(K);\Z) \to 2\Z \subset H_1(M_K;\Z) \cong \Z$ is the map induced by the covering projection, and is thus extended by the covering projection induced map~$H_1(W_2(D);\Z) \to H_1(W_D;\Z)$. 
We deduce that
\begin{equation} \label{eq:SignatureTau}
\sign_1^{\op{av}}\big(\tau(K,\chi)\big)=\sign_1^{\operatorname{av}} \lambda_{W_2(D),\C(t)}-\sign^{\operatorname{av}}_{1}\lambda_{W_2(D),\Q}.
\end{equation}
The signature $\sign^{\operatorname{av}}_1\lambda_{W_2(D),\Q}$ coincides with the signature $\sign(W_2(D))$ of $W_2(D)$.
The signature~$\sign_1^{\operatorname{av}} \lambda_{W_2(D),\C(t)}$ is bounded by the dimension of the $\C(t)$-vector space $H_2(W_2(D);\C(t)_{\alpha \times \chi})$, which we compute below. For this, denote the corresponding twisted Betti numbers by~$\beta_i^{\C(t)}(W_2(D))$, which are given by:
\begin{claim}
\label{claim1}
$\beta_i^{\C(t)}(W_2(D))=0$ for $i \neq 2$.
\end{claim}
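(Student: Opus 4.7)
The plan is to follow a standard pattern: establish that the boundary is acyclic with respect to the twisted coefficients, use Poincar\'e-Lefschetz duality to obtain a symmetry on the Betti numbers, and then show directly that the remaining two Betti numbers $\beta_0$ and $\beta_1$ vanish.

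First, since $\partial W_2(D) = M_2(K)$, I would invoke \cite[Lemma~4]{CassonGordon2}---which crucially uses the prime power order of $\chi$---to conclude $H_*(M_2(K); \C(t)^{\alpha\times\chi}) = 0$. The long exact sequence of the pair $(W_2(D), \partial W_2(D))$ with these coefficients then yields isomorphisms
\[ H_i(W_2(D); \C(t)^{\alpha\times\chi}) \cong H_i(W_2(D), \partial W_2(D); \C(t)^{\alpha\times\chi}) \]
for every $i$, and combining Poincar\'e-Lefschetz duality with universal coefficients over the field $\C(t)$ gives the symmetry $\beta_i^{\C(t)}(W_2(D)) = \beta_{4-i}^{\C(t)}(W_2(D))$. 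The vanishing of $\beta_0$ is immediate from the non-triviality of $\alpha$: pick $g \in \pi_1(W_2(D))$ with $\alpha(g) = 1$, note that $g$ acts on $\C(t)$ as the nonzero element $\omega^{\chi(g)} t$, so $1 - \omega^{\chi(g)}t$ is invertible in $\C(t)$ and the coinvariants $H_0$ vanish.

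The crux is the vanishing $\beta_1^{\C(t)}(W_2(D)) = 0$. Here the plan is to apply Lemma~\ref{lem:FiniteDimensional} with $Y = W_D$, using that $H_1(W_D;\Z) = \Z$ is generated by the meridian of $D$. Let $\widetilde{W_D}$ denote the infinite cyclic cover of $W_D$ corresponding to this abelianization, and let $\widetilde{X} \to \widetilde{W_D}$ be the $\Z_d$-cover induced by $\chi$; since $d = p^r$ is a prime power, the lemma guarantees that $H_1(\widetilde{X};\Q)$ is finite dimensional over $\Q$. Identifying $H_1(W_2(D); \C[t,t^{-1}]^{\alpha\times\chi})$ with the $\omega$-isotypic component of $H_1(\widetilde{X};\C)$ under the $\Z_d$-deck action shows that this twisted homology group is finite dimensional over $\C$, and is therefore a torsion $\C[t,t^{-1}]$-module; tensoring with the flat $\C[t,t^{-1}]$-module $\C(t)$ then yields $H_1(W_2(D); \C(t)^{\alpha\times\chi}) = 0$. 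The main obstacle will be setting up this identification precisely, by tracking the tower of covers and the compatible $\Z$- and $\Z_d$-actions, so that Lemma~\ref{lem:FiniteDimensional}---which outputs finite-dimensionality over the base $W_D$---translates into torsion of the relevant Alexander-type module on the $2$-fold cover $W_2(D)$.
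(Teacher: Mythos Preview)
Your proposal is correct and follows essentially the same route as the paper: handle $i=0$ directly, reduce $i=3,4$ to $i=0,1$ via the long exact sequence of the pair, Poincar\'e--Lefschetz duality, and universal coefficients (using the already-established $\C(t)$-acyclicity of $M_2(K)$), and then for $i=1$ apply Lemma~\ref{lem:FiniteDimensional} with $Y=W_D$ to conclude that the $(\Z\times\Z_d)$-cover has finite-dimensional rational $H_1$, which dies upon passage to $\C(t)$. The only cosmetic difference is that the paper invokes flatness of $\C(t)$ over $\Q[\Z\times\Z_d]$ in one step, whereas you pass through the $\omega$-isotypic component of $H_1(\widetilde{X};\C)$ and then use flatness of $\C(t)$ over $\C[t,t^{-1}]$; these are equivalent formulations of the same argument.
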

Arguing as in~\cite[Lemma 8.1]{BorodzikConwayPolitarczyk}, the assertion holds for $i=0$. Using the long exact sequence, duality and universal coefficients, it only remains to show that $\beta_1^{\C(t)}(W_2(D))=0$. 
At this point, it is useful to think with covers. The map~$\alpha \times \chi$ induces a $(\Z \times \Z_d)$--cover $\widetilde{W}(D)$ of $W_2(D)$. In other words, we first get a cover $W_\infty(D) \to W_2(D)$ and then take the additional $d$--cover induced by the composition $H_1(W_\infty(D);\Z)\to H_1(W_2(D);\Z)\to \Z_d$. 
But now, $X=W_\infty(D)$ is also a cover of~$W_D$. Therefore, we can apply Lemma~\ref{lem:FiniteDimensional} to $Y=W_D$ to obtain that~$H_1(\widetilde{W}(D);\Q)$ is finite dimensional. Since $\C(t)$ is flat over $\Q[\Z_d \times \Z]$~\cite[p.~189]{CassonGordon2}, we get 
$H_{1}(W_2(D);\C(t)_{\alpha \times \chi}) =  \C(t) \otimes_{\Q[\Z_d \times \Z]} H_{1}(\widetilde{W}(D);\Q)=0$. 
This shows that~$\beta_1^{\C(t)}(W_2(D))=0$, and the claim is proved.

The Euler characteristic can be computed with any coefficients.
Therefore, the claim implies that $\beta_2^{\C(t)}(W_2(D))=\chi^{\C(t)}(W_2(D))$ is equal to the Euler characteristic~$\chi(W_2(D))=2\chi(W_D)=2(1-1+2n)=4n$ of the unbranched cover. We have established that $\sign_1^{\op{av}}\lambda_{W_2(D),\C(t)} \leq 4n$ and the theorem will follow promptly from the following claim:
\begin{claim} \label{claim2}
$\sign(W_2(D))=\sigma_K(-1)$.
\end{claim}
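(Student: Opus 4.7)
The plan is to compare $\sign(W_2(D))$ to the signature of the $2$--fold cover of~$D^4$ branched along a pushed-in Seifert surface for~$K$; this latter signature classically equals $\sigma_K(-1)$.

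First I would establish $\sign(W_2(D)) = \sign(\Sigma_2(D))$, where $\Sigma_2(D)$ denotes the $2$--fold cover of $D^4 \csum n S^2 \times S^2$ branched along~$D$. Writing $\Sigma_2(D) = W_2(D) \cup \nu \tilde D$ with $\nu \tilde D \cong D^2 \times D^2$ glued along the solid torus $\tilde D \times S^1$, the relevant piece of the Mayer--Vietoris sequence with rational coefficients (using $H_2(\nu \tilde D) = H_2(\tilde D \times S^1) = 0$) yields an injection $H_2(W_2(D); \Q) \hookrightarrow H_2(\Sigma_2(D); \Q)$ compatible with the intersection forms. Lemma~\ref{lem:SndHomology} and the computation in its proof show that both spaces have rational dimension~$4n$, so this inclusion is an isometry and hence preserves signatures.

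Next, let $F \subset S^3$ be a Seifert surface for~$K$, pushed into the interior of~$D^4$, and write $\Sigma_2(F)$ for the associated $2$--fold branched cover. A classical computation identifies the intersection form on $H_2(\Sigma_2(F); \Z)$ with $V + V^T$ for any Seifert matrix~$V$ of~$K$, so $\sign(\Sigma_2(F)) = \sigma_K(-1)$; see e.g.~\cite[Chapter~4]{GordonSurvey}. I would then form the closed $4$--manifold
\[ Y := \Sigma_2(D) \cup_{\Sigma_2(K)} -\Sigma_2(F). \]
Since $(D^4 \csum n S^2 \times S^2) \cup_{S^3} (-D^4) = n S^2 \times S^2$, the manifold~$Y$ is the $2$--fold cover of $n S^2 \times S^2$ branched along the closed surface $\Sigma := D \cup F$. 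To see that $[\Sigma] = 0$ in $H_2(n S^2 \times S^2; \Z)$, intersect $\Sigma$ with each spherical generator, chosen to lie in one of the $S^2 \times S^2$ summands and hence disjoint from~$F$: the intersection reduces to that of~$D$ with the generator inside $D^4 \csum n S^2 \times S^2$, which vanishes by the nullhomologous hypothesis on~$D$.

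Finally, the standard Rokhlin-type signature formula for $2$--fold branched covers along a closed surface gives
\[ \sign(Y) = 2\sign(n S^2 \times S^2) - \tfrac{1}{2}[\Sigma]^2 = 0, \]
and Novikov additivity yields
\[ 0 = \sign(Y) = \sign(\Sigma_2(D)) - \sign(\Sigma_2(F)) = \sign(W_2(D)) - \sigma_K(-1), \]
which is the claim. The main technical point will be the first step: an alternative route invokes Wall non-additivity for the gluing along~$\tilde D \times S^1$, but the dimension count supplied by Lemma~\ref{lem:SndHomology} bypasses those correction terms cleanly.
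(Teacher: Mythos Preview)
Your argument is correct, but it takes a different route from the paper. The paper works directly on the unbranched cover via the eigenspace decomposition: writing $H_2(W_2(D);\C)$ as the orthogonal sum of the $(+1)$-- and $(-1)$--eigenspaces of the deck transformation and invoking Lemma~\ref{lem:Eigenspaces}, one has $\sign(W_2(D))=\sign^{+}+\sign^{-}$ with $\sign^{+}=\sign(W_D)=\sign(D^4\csum nS^2\times S^2)=0$ and $\sign^{-}=\sign_{-1}(W_D)$; Theorem~\ref{thm:OtherAmbientSpaces} (using that the ambient signature vanishes) then gives $\sign_{-1}(W_D)=\sigma_K(-1)$. Your approach instead passes to the branched cover, caps off with a Seifert branched cover, and appeals to Novikov additivity together with the $G$--signature formula for $2$--fold branched covers. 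This is perfectly valid and is the classical route; its cost is that you import the branched-cover signature formula and the Kauffman--Taylor computation $\sign(\Sigma_2(F))=\sigma_K(-1)$ as external inputs, whereas the paper's proof stays entirely within the twisted-signature machinery already set up (Lemma~\ref{lem:Eigenspaces} and Theorem~\ref{thm:OtherAmbientSpaces}) and is correspondingly shorter. Your first step, identifying $\sign(W_2(D))$ with $\sign(\Sigma_2(D))$ via the dimension count from Lemma~\ref{lem:SndHomology}, is a nice observation but is simply unnecessary in the eigenspace argument.
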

Recall that for $\omega \in \C$,  the twisted intersection form of $W_D$ with coefficient system $\C^\omega$ is denoted by $\lambda_{\C^\omega}(W_D)$. Taking $\omega=-1$, we know from Theorem~\ref{thm:OtherAmbientSpaces} that $\sigma_K(-1)=\sign\big(\lambda_{W_D,\C^{-1}}\big)$. 
Using Lemma~\ref{lem:Eigenspaces}, this twisted signature is the same as signature of the tensored up intersection form on $\C^{-1} \otimes_{\Z[\Z_2]} H_2(W_2(D);\Z)$.
This is the same as the signature of the intersection form on~$W_2(D)$ restricted to the $(-1)$--eigenspace of $H_2(W_2(D); \C)$. Since we are dealing with double covers, $\sign(W_2(D))$ is equal to the sum of the $(+1)$--signature and the $(-1)$--signature. 
The $(+1)$--signature is the signature of~$W_D$ and is therefore trivial.
The claim follows.

Using the claim, we know that the signature of the presentation matrix for $\beta_1$ 
is the signature $\sigma_K(-1)$. On the other hand, using~\eqref{eq:SignatureTau} and the claim, we deduce that $|\sign_1^{\op{av}}(\tau(K,\chi))+\sigma_K(-1)|=|\sign_1^{\op{av}}  \lambda_{W_2(D),\C(t)}|.$ We already saw above that this quantity is bounded by $4n$, and as we already mentioned, this was the last step to establish~\eqref{eq:GoalCG}. This concludes the proof of the theorem.
\end{proof}

The next example provides an application of Theorem~\ref{thm:CGstabilizing}.

\begin{example}\label{ex:AlgSlicesn2}
We construct an algebraically slice knot whose stabilizing number is at least $2$:
set $R := 9_{46}$ and pick a knot~$J$ with the following properties:
\begin{enumerate}
	\item the signature of $J$ satisfies $\sigma_J( e^{2\pi i /3} ) > \frac{3}{2}\big( |\sigma(R, \wh \chi)| + |\eta(R, \wh \chi')| \big) + \frac{7}{2}$ for all characters $\wh \chi, \wh \chi' \colon H_1(\Sigma_2(R);\Z) \to \Z_3$;
\item the Alexander polynomial~$\Delta_J$ is non-zero at third roots of unity. 
\end{enumerate}
\begin{figure}
\includegraphics{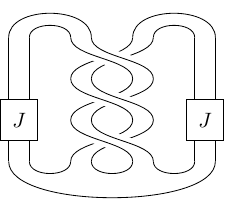}
\caption{The satellite knot $R(J,J)$ described in Example~\ref{ex:AlgSlicesn2}.}
\label{fig:KJ}
\end{figure}
Now consider the knot $K:=\#_{i=1}^3 R(J,J)$, where $R(J,J)$ is depicted in Figure~\ref{fig:KJ}.
Recall from Remark~\ref{rem:LinkingFormSeifert} that the linking form $\beta_K$ on $H_1(\Sigma_2(K);\Z)$ is presented by any symmetrized Seifert matrix for $K$.
A direct computation shows that the knot~$R$ admits $\bsm 0&1 \\ 2&0 \esm$ as a Seifert matrix; see e.g.~\cite[p. 325]{LivingstonSurvey}.
It follows that a Seifert matrix for $K$ is given by $\oplus_{i=1}^3 \bsm 0&1 \\2&0 \esm$.
Consequently, the matrix $\oplus_{i=1}^3 \bsm 0&3 \\3&0 \esm$ presents $\beta_K$, and $K$ is algebraically slice.
 We will argue that
\[ 2 \leq \operatorname{sn}(K)  \leq g_4^{\op{top}}(K)=3.\]
The inequality $\operatorname{sn}(K)  \leq g_4^{\op{top}}(K)$ is proved in Theorem~\ref{thm:StablGenus} below.
The equality $g_4^{\op{top}}(K)=3$ is known~\cite{GilmerGenus, FlorensGilmer}, but we will outline the argument below.
We show that $2 \leq \operatorname{sn}(K)$.
 By way of contradiction, assume that $\operatorname{sn}(K)=1$.
 Theorem~\ref{thm:CGstabilizing} provides the decomposition $\beta_K=\beta_1 \oplus \beta_2$, where~$\beta_1$ is presented by a rank $4$ matrix and where $\beta_2$ admits a metabolizer $G$ such that for every character~$\chi \in G$, the following inequality holds:
\begin{equation} \label{eq:GoalAlgSlice}
|\sigma(K,\chi)|-\eta(K,\chi) \leq 5.
\end{equation}
We compute the Casson-Gordon invariant using satellite formulas; cf. \cite{GilmerGenus,FlorensGilmer} for an approach using surgery formulas.
Let $F$ denote the Seifert surface for~$K$ given by the disk-band form depicted in Figure~\ref{fig:CExample}, and let $e_1,f_1,e_2,f_2,e_3,f_3$ be the curves in $S^3 \setminus F$ that are Alexander dual to the canonical generators of $H_1(F;\Z)$, i.e. to the cores of the bands of $F$.
Recall that these curves generate $H_1(\Sigma_2(R);\Z)$.
Since $H_1(\Sigma_2(K);\Z) =\oplus_{i=1}^3 H_1(\Sigma_2(R);\Z)$, we write characters as $\chi=\chi_1 \oplus \chi_2 \oplus \chi_3$. 
Define $\delta$ to be the number of non-trivial characters among $\chi_1,\chi_2,\chi_3$ 
minus one. 
Thus $0 \leq \delta \leq 2$ for $\chi \neq 0$.
Set $\omega:=e^{2\pi i/3}$. Applying the satellite formulas of Theorem~\ref{thm:AbchirSimplified}, we obtain
\begin{align*}
 \sigma(K,\chi) &=\sum_{i=1}^3 \big( \sigma(R,\chi_i)+2\sigma_{J}\big(\omega^{\chi_i(e_i)}\big)+2\sigma_{J}\big(\omega^{\chi_i(f_i)}\big) \big), \\
  \eta(K,\chi) &=\delta+\sum_{i=1}^3 \big( \eta(R,\chi_i)+2\eta_{J}\big(\omega^{\chi_i(e_i)}\big)+2\eta_{J}\big(\omega^{\chi_i(f_i)}\big) \big)
	= \delta+\sum_{i=1}^3 \eta(R,\chi_i) \nonumber,
\end{align*}
where in the last equality we used that $\eta_J(\omega) = 0$ if $\Delta_J(\omega) \neq 0$. 
To see this latter fact, note that $\eta_J(\omega)$ can be defined as the nullity of the Hermitian matrix $(1-\omega)A+(1-\overline{\omega})A^T$, where $A$ is any Seifert matrix for $J$. 
If $\chi(e_i), \chi(f_i)$ do not all vanish, then 
\begin{align*}
	|\sigma(K,\chi)|-\eta(K,\chi) &\geq 2 \Big | \sigma_J\big(\omega^{\chi_i(e_i)}\big) + \sigma_{J}\big(\omega^{\chi_i(f_i)}\big) \Big| - \Big| \sum_{i=1}^3 \sigma\big(R,\chi_i\big) \Big| - \eta(K,\chi)\\ 
	&\geq 2 \Big| \sigma_J(e^{2\pi i /3}) \Big| -  3\Big|\sigma(R, \wh \chi)\Big| -3\Big| \eta(R, \wh \chi') \Big| - \delta > 7-\delta \geq 5,
\end{align*}
for suitable choices $\wh \chi, \wh \chi' \colon H_1(\Sigma_2(R);\Z) \to \Z_3$.
Thus, if $G$ is non-trivial, then \eqref{eq:GoalAlgSlice} cannot be fulfilled, and $\sn(K) >1$.

The same reasoning shows that $g_4^{\op{top}}(K)=3$. 
Namely, if $g_4^{\op{top}}(K)=2$, then Theorem~\ref{thm:CGGenus}
shows that $\beta_K=\beta_1 \oplus \beta_2$, with $\beta_1$ admitting a rank $4$ presentation matrix; the reasoning is analogous to the above. 
\end{example}

\subsection{The stabilizing number is not the \texorpdfstring{$4$--genus}{4-genus}.}\label{sub:NotGenus}
We give an example of an infinite family of knots~$R(J_1,J_2,J_3)$ with~$\sn(R(J_1,J_2,J_3)) =1$, but topological $4$--genus~$g_4^{\op{top}}(R(J_1,J_2,J_3))=2$, proving Proposition~\ref{prop:Genus2stabilizingNumber1} from the introduction.
The first part of this subsection is devoted to showing that $\sn(R(J_1,J_2,J_3)) \leq 1$, while the second uses Casson-Gordon invariants to show that $g_4^{\op{top}}(R(J_1,J_2,J_3))=2$ for appropriate choices of $J_1,J_2,J_3$.
Since $\sn(K)=0$ if and only if $K$ is slice, this also implies that $\sn(R(J_1,J_2,J_3))=1$.

\medbreak
\begin{figure}[!htb]
\includegraphics{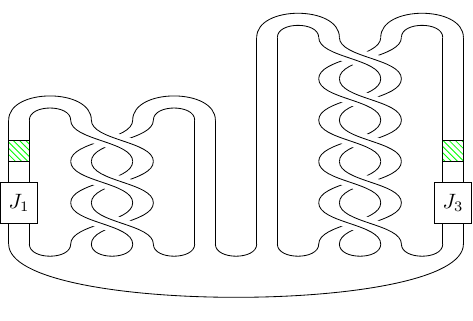}
\caption{The slice knot~$R(J_1, U, J_3)=R_1(J_1,U) \# R_2(U,J_3)$ depicted with the necessary band-moves.}
\label{fig:BandsToCut}
\end{figure}

Consider the knot $R(J_1, J_2, J_3)$ depicted in Figure~\ref{fig:CExample}. 
Observe that it is obtained as a winding number $0$ satellite with pattern $R:=R_1 \# R_2$ by infecting along the curves ${\gamma}_1,{\gamma}_2,{\gamma}_3$ also depicted in Figure~\ref{fig:CExample}. 
Since the ${\gamma}_i$ are winding number~$0$ infection curves, the following corollary is an immediate consequence of Lemma~\ref{lem:WindingPattern}.

\begin{corollary}
The knot~$K = R(J_1, J_2, J_3)$ has stabilizing number~$\sn(K) \leq 1$ for any choice of knots~$J_1, J_2, J_3$.
\end{corollary}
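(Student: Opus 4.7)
The plan is to express $K = R(J_1,J_2,J_3)$ as a single winding-number-zero satellite of a slice knot, and then to apply Lemma~\ref{lem:WindingPattern} exactly once. Since the infections of $R$ along $\eta_1$ and $\eta_3$ take place inside disjoint tubular neighbourhoods of these curves, they leave $\eta_2$ unaffected; hence $P := R(J_1,U,J_3)$ is still a knot in the solid torus $S^3 \setminus \nu(\eta_2)$ of winding number zero with respect to $\eta_2$, and by construction one has the identification $K = P(J_2;\eta_2,0)$.

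First, I would apply Lemma~\ref{lem:WindingPattern} to this pattern $P$, companion $J_2$, and twist $k=0$. This produces a $1$-stable concordance between $K = P(J_2;\eta_2)$ and $P = P(U;\eta_2) = R(J_1,U,J_3)$. Next, I would appeal to Figure~\ref{fig:BandsToCut} to see that $R(J_1,U,J_3)$ admits an explicit sequence of band moves that reduces it to an unlink, exhibiting it as a slice knot in $D^4$ for every choice of $J_1$ and $J_3$; the key point here is that the bands are supported away from the companions, so the argument is uniform in $J_1,J_3$. Finally, stacking the resulting slice disk for $P$ on top of the stable concordance and invoking Lemma~\ref{lem:StablyConcordantUnlinkStablySlice} produces a nullhomologous locally flat slice disk for $K$ in $D^4 \csum S^2 \times S^2$, yielding $\sn(K) \leq 1$.

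The only non-formal ingredient is the sliceness of $R(J_1,U,J_3)$; the main (minor) obstacle is therefore to verify from Figure~\ref{fig:BandsToCut} that the indicated band surgeries reduce $R(J_1,U,J_3)$ to an unlink regardless of the choice of $J_1$ and $J_3$. Once that diagrammatic check is performed, the invocation of Lemma~\ref{lem:WindingPattern} is entirely mechanical, which accounts for the author's description of the corollary as an immediate consequence of that lemma.
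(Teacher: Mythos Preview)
Your proposal is correct and follows essentially the same route as the paper: apply Lemma~\ref{lem:WindingPattern} once to the winding-number-zero infection along $\eta_2$ to get a $1$-stable concordance from $K$ to $R(J_1,U,J_3)$, observe from Figure~\ref{fig:BandsToCut} that the latter is slice, and glue. Your write-up is slightly more explicit (spelling out $K = P(J_2;\eta_2)$ and invoking Lemma~\ref{lem:StablyConcordantUnlinkStablySlice} for the gluing step), but the argument is the same.
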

\begin{proof}
By Lemma~\ref{lem:WindingPattern}, the knot~$K$ is $1$--stably concordant to the knot~$R(J_1, U, J_3)$. The knot~$R(J_1, U, J_3)$ is slice by the bands-moves depicted in Figure~\ref{fig:BandsToCut}. Gluing the annulus to the slice disk, we obtain a stable slice disk for~$R(J_1, J_2, J_3)$ in~$D^4 \csum S^2 \times~S^2$.
\end{proof}

Next, we use Casson-Gordon invariants and Theorem~\ref{thm:CGGenus} to show that for appropriate choices of~$J_1,J_2,J_3$, the knot $R(J_1,J_2,J_3)$ has $4$--genus~$2$; note that a glance at Figure~\ref{fig:CExample} shows that $g_4^{\op{top}}(R(J_1,J_2,J_3))\leq 2$.
This will be based on a computation of the Casson-Gordon $\sigma$--invariant for satellite knots and will make use of the satellite formulas of Theorem~\ref{thm:AbchirSimplified}.

A symmetrized Seifert matrix for $R(J_1,J_2,J_3)$ is given~by 
\[ \begin{pmatrix} 0&3\\3&0 \end{pmatrix} \oplus  \begin{pmatrix} 0&5\\5&0 \end{pmatrix}. \]
Using Remark~\ref{rem:LinkingFormSeifert}, this matrix presents~$H_1\big(\Sigma_2(R(J_1,J_2,J_3);\Z\big) \cong \Z_3^2 \oplus \Z_5^2$ and the linking form it supports.

The next proposition provides a criterion ensuring that $g_4^{\op{top}}(R(J_1,J_2,J_3))=2$, which is fulfilled for example by taking $J_i$ to be a large enough connect sum of the knot~$-9_{35}$.
\begin{proposition}\label{prop:Genus2Stab1}
Assume the knots $J_1,J_2,J_3$ satisfy the following conditions for all~$\omega$ with $\omega^{15} = 1$:
\begin{enumerate}
\item $\eta_{J_i}(\omega)=0$, 
\item $\sigma_{J_i}(\omega) \geq \max_{\chi_1, \chi_2} \big\{ |\sigma(R_1,\chi_1)| + |\eta(R_1, \chi_1)|,|\sigma(R_2,\chi_2)| + |\eta(R_2, \chi_2)| \big\}+ 3$,
\end{enumerate}
where $\chi_1,\chi_2$ range over prime-power order characters~$H_1(\Sigma_2 (R_i); \Z) \to \Z_{15}$ for $i=~1,2$.
Then the knot $R(J_1,J_2,J_3)$ has topological $4$--genus at least $2$:
\[g_4^{\op{top}}(R(J_1,J_2,J_3)) \geq 2.\]
\end{proposition}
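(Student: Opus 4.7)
The plan is to argue by contradiction: assume $g_4^{\op{top}}(K) \leq 1$, where $K = R(J_1, J_2, J_3)$, and derive a violation of Gilmer's Casson--Gordon inequality (Theorem~\ref{thm:CGGenus}). From the given symmetrised Seifert matrix one reads off $\sigma_K(-1)=0$ and $H_1(\Sigma_2(K);\Z) \cong \Z_3^2 \oplus \Z_5^2$, with $\beta_K$ the orthogonal sum of the hyperbolic linking forms on the two primary summands. Gilmer's theorem then supplies a splitting $\beta_K = \beta_1 \oplus \beta_2$ with $\beta_1$ of rank $2$ and signature $0$, together with a metaboliser $G$ of $\beta_2$ such that
\[ |\sigma(K,\chi)| \leq \eta(K,\chi) + 5 \]
for every prime-power character $\chi \in G$.

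The next step is to enumerate the possible decompositions. Since $3$ and $5$ are coprime any splitting respects the primary decomposition, and the hyperbolic form on $\Z_p^2$ contains no proper non-trivial subform admitting a metaboliser (a form on $\Z_p$ has order $p$, which is not a square). Combined with the rank-$2$ constraint on~$\beta_1$, this leaves two essentially distinct scenarios: (Case~A) $\beta_1$ is the $\Z_3^2$-summand and $\beta_2$ the $\Z_5^2$-summand, or (Case~B) the roles are swapped; in each case the metaboliser $G$ is one of the two isotropic $\Z_p$-subgroups of the hyperbolic $\Z_p^2$. The remaining degenerate case $\beta_1 = 0$, $\beta_2 = \beta_K$ (using the even rank-$2$ presentation $\bsm 0 & 1 \\ 1 & 0 \esm$) has a metaboliser that contains those of Cases~A and~B, so it is handled for free.

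I then compute the Casson--Gordon invariants for $\chi \in G$ by iterating the winding-number-zero satellite formulas of Theorem~\ref{thm:AbchirSimplified}. Setting $d = \op{ord}(\chi) \in \{3,5\}$ and $\omega = e^{2\pi i /d}$, and writing $\eta_i^{(1)}, \eta_i^{(2)}$ for the two lifts of $\eta_i$ to $\Sigma_2(K)$, one obtains
\[ \sigma(K,\chi) = \sigma(R, \chi|_R) + \sum_{i=1}^{3}\Bigl[ \sigma_{J_i}\bigl(\omega^{\chi(\eta_i^{(1)})}\bigr) + \sigma_{J_i}\bigl(\omega^{\chi(\eta_i^{(2)})}\bigr) \Bigr], \]
while assumption~(1) kills the satellite contribution to the nullity, giving $\eta(K,\chi) = \eta(R, \chi|_R)$. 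The covering involution of $\Sigma_2(K)$ acts as $-\id$ on first homology, so $[\eta_i^{(2)}] = -[\eta_i^{(1)}]$, and by the standard symmetry $\sigma_{J_i}(\overline{\omega}) = \sigma_{J_i}(\omega)$ the two bracketed terms coincide; each ``active'' infection therefore contributes exactly $2\sigma_{J_i}(\omega^{\chi(\eta_i^{(1)})})$.

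Finally, set $B := \max_{\chi',\chi''}\{|\sigma(R, \chi')| + |\eta(R,\chi'')|\} + 3$; by assumption~(2), $\sigma_{J_i}(\omega^k) \geq B$ for every non-trivial $15$-th root of unity $\omega^k$. If in each of Cases~A and~B, for each possible metaboliser, one can exhibit a prime-power character $\chi \in G$ with $\chi(\eta_i^{(1)}) \neq 0$ for some $i$, then the corresponding infection contributes at least $2B$ to $\sigma(K,\chi)$, yielding
\[ |\sigma(K,\chi)| \geq 2B - |\sigma(R,\chi|_R)| \geq |\sigma(R,\chi|_R)| + 2\eta(R,\chi|_R) + 6 > \eta(K,\chi) + 5, \]
contradicting Gilmer's inequality. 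The main obstacle is therefore this final geometric verification: one must locate the classes $[\eta_i^{(1)}] \in H_1(\Sigma_2(R);\Z) \cong \Z_3^2 \oplus \Z_5^2$ from Figure~\ref{fig:CExample} and check that in each primary summand the lifts are not jointly contained in any single isotropic $\Z_p$-subgroup. Concretely, I plan to fix an explicit symplectic basis of $H_1(\Sigma_2(R);\Z)$ coming from Alexander duals of the bands of the obvious genus-$2$ Seifert surface for $R = R_1 \# R_2$, and then read off the coordinates of each $\eta_i$ via its $S^3$-linking numbers with these dual curves.
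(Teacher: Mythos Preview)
Your overall strategy matches the paper's, but the route is more roundabout. The paper does not enumerate decompositions $\beta_K=\beta_1\oplus\beta_2$ or metabolisers; instead it proves the universal statement that $|\sigma(K,\chi)|-\eta(K,\chi)>5$ for \emph{every} non-trivial prime-power character $\chi$, and then simply observes that any non-trivial metaboliser contains such a $\chi$. What drives this is exactly the geometric computation you postpone to the end: with $e_1,f_1,e_2,f_2$ the Alexander duals of the bands of the obvious Seifert surface for $R=R_1\#R_2$, one reads off $\eta_1=e_1$, $\eta_3=f_2$ and $\eta_2=f_1-e_2$. A prime-power character has exactly one of its $3$- and $5$-primary components non-zero, so whenever $\chi\neq 0$ at least one of $\chi(e_1),\chi(f_2),\chi(f_1)-\chi(e_2)$ is non-zero and some infection is active. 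Your case-by-case verification would eventually rediscover this, but doing the coordinate computation first and proving the ``for all'' statement is both shorter and avoids the bookkeeping of listing metabolisers.

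There is also a gap in your enumeration: you treat $\beta_1=0$ but not $\beta_1=\beta_K$, $\beta_2=0$. The form $\beta_K$ \emph{does} admit an even rank-$2$ signature-$0$ presentation, for instance $\bsm 0&15\\15&0\esm$, so this case is not excluded by the abstract constraints you list; here $G=\{0\}$ and Gilmer's inequality is vacuous. The paper's proof skates over the same point, asserting ``thus $\beta_2$ is non-trivial'' without argument, so both proofs need an extra word here.
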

\begin{proof}
As we mentioned above, our goal is to apply the genus obstruction of Theorem~\ref{thm:CGGenus}.
Since $R(J_1,J_2,J_3)$ and $R(U,U,U) = R_1 \csum R_2$ have the same Seifert matrix,
we have $H_1(\Sigma_2(R(J_1,J_2,J_3));\Z) \cong H_1(\Sigma_2(R_1 \csum R_2);\Z)$ and the linking forms of the two are isometric. 

Abbreviate $R(J_1,J_2,J_3)$ by $K$. Pick a non-trivial prime-power order character $\chi \colon H_1(\Sigma_2(K);\Z) \to \Z_{15}$. In order to apply Theorem~\ref{thm:CGGenus}, we must compute the Casson-Gordon invariants $\sigma(K,\chi)$ and $\eta(K,\chi)$. This will be done by using the satellite formula described in Theorem~\ref{thm:AbchirSimplified}.
Observe that~$K$ is a winding number $0$ satellite knot with pattern $R(J_1,U,J_3)=R_1(J_1,U) \csum R_2(U,J_3)$, infection curve ${\gamma}_2$ and companion~$J_2$. 
The previous paragraph implies that to the character~$\chi \colon H_1(\Sigma_2(K);\Z) \to \Z_{15}$ correspond characters~$\chi_1 \colon H_1(\Sigma_2(R_1);\Z) \to \Z_{15}$ and~$\chi_2 \colon H_1(\Sigma_2(R_2);\Z) \to \Z_{15}$, which take values in $\Z_3 = 5 \cdot \Z_{15}$ and $\Z_5 = 3 \cdot \Z_{15}$. 
Since~$\chi$ has prime power order, one of the characters $\chi_1$, $\chi_2$ vanishes.

Let $\omega = e^{2\pi i/15}$. 
Using repeatedly the satellite and connected sum formulas of Theorem~\ref{thm:AbchirSimplified}, and remembering that one of the $\chi_i$ must be trivial,
we obtain
\begin{align*}
\sigma(K,\chi)
&=\sigma\big(R(J_1,U,J_3),\chi \big)+2\sigma_{J_2}(\omega^{\chi({\gamma}_2)}) \\
&=\sigma\big(R_1(J_1,U),\chi_1 \big)+\sigma\big(R_2(U,J_3),\chi_2\big)+2\sigma_{J_2}(\omega^{\chi({\gamma}_2)}) \\
&=\sigma(R_1,\chi_1)+\sigma(R_2,\chi_2)+2\sigma_{J_1}(\omega^{\chi_1({\gamma}_1)})+2\sigma_{J_3}(\omega^{\chi_2({\gamma}_3)})+2\sigma_{J_2}(\omega^{\chi({\gamma}_2)}), \\
\eta(K,\chi)&=\eta(R_1,\chi_1)+\eta(R_2,\chi_2)+2\eta_{J_1}(\omega^{\chi_1({\gamma}_1)})+2\eta_{J_3}(\omega^{\chi_2({\gamma}_3)})+2\eta_{J_2}(\omega^{\chi({\gamma}_2)}).
\end{align*}
We will now make these expressions more explicit. Let $F$ be the Seifert surface for $R_1 \csum R_2$ given by the disk-band form depicted in Figure~\ref{fig:BandsToCut}.
Use $e_1,f_1,e_2,f_2$ to denote the curves in $S^3 \setminus F$ that are Alexander dual to the canonical generators of $H_1(F;\Z)$. 
Observe that these curves are generators of~$H_1(\Sigma_2(K);\Z)$ and that ${\gamma}_1=e_1$, ${\gamma}_3=f_2$ and ${\gamma}_2=f_1-e_2$. As a consequence, we have 
\begin{align}\label{eq:SigmaRJ1J2J3}
 \sigma(K,\chi) &=\sigma(R_1,\chi_1)+\sigma(R_2,\chi_2) \\
&\quad+2\sigma_{J_1}(\omega^{\chi_1(e_1)})+2\sigma_{J_3}(\omega^{\chi_2(f_2)})
 +2\sigma_{J_2}(\omega^{\chi_1(f_1)} \overline{\omega}^{\chi_2(e_2)}), \nonumber\\
  \eta(K,\chi) &=\eta(R_1,\chi_1)+\eta(R_2,\chi_2), \nonumber
\end{align}
where we used that $\eta_J(\omega^k)$ is zero for $k = 0, \ldots, 14$.
\begin{claim}
If $\chi$ is a non-trivial prime-power order character, then \[ |\sigma(K,\chi)|-\eta(K,\chi)> 5. \]
\end{claim}
Since $\chi$ is non-trivial, at least one of $\chi_1(e_1)$, $\chi_1(f_1)$,$\chi_2(e_2)$, $\chi_2(f_2)$ is non-trivial.
Also, only one of $\chi_1(f_1)$, $\chi_2(e_2)$ can be non-trivial, since $\chi$ is of prime power order.
This implies that not all of
$\omega^{\chi_1(e_1)}$, $\omega^{\chi_2(f_2)}$, $\omega^{\chi_1(f_1)-\chi_2(e_2)}$ can be $1$. 
Using the hypothesis on $J_1,J_2,J_3$, for some $i \in \lbrace 1,2,3 \rbrace$ we obtain the estimate 
\begin{align*}
|\sigma(K,\chi)|-\eta(K,\chi)
&= \big|\sigma(R,\chi_1)+\sigma(R,\chi_2)+ 2\sigma_{J_1}(\omega^{\chi_1(e_1)})\\
&\quad+2\sigma_{J_3}(\omega^{\chi_2(f_2)}) +
 2\sigma_{J_2}(\omega^{\chi_1(f_1)} \overline{\omega}^{\chi_2(e_2)})\big| -\eta(K,\chi)\\
&\geq \sigma(R_1,\chi_1)+\sigma(R_2,\chi_2)+2\sigma_{J_i}(\omega^k) - \eta(R_1,\chi_1)-\eta(R_2,\chi_2) \\
& >  5
\end{align*}
for a suitable $k \in \Z$ which is not divisible by~$15$.

Now we use these observations to conclude.
 By way of contradiction, assume that~$K$ bounds a surface of genus $g=1$ in $D^4$. Theorem~\ref{thm:CGGenus} tells us that the linking form $\beta_K$ decomposes as~$\beta_1 \oplus \beta_2$, where $\beta_1$ has an even presentation matrix of rank $2g=2$ 
and $\beta_2$ has a metabolizer~$G$.
Thus $\beta_2$ is non-trivial, and so also $G$ is non-trivial. Deduce that $G$ has to contain a non-trivial element~$\chi$ of prime power order. Invoking Theorem~\ref{thm:CGGenus}, this element
has to fulfill the equation
\begin{equation}
 |\sigma(K,\chi)+0|-\eta(K,\chi)\leq 4g+1=5,
\end{equation}
which contradicts the claim above. We deduce that $g_4^{\op{top}}(K) \geq 2$.
\end{proof}

The next remark outlines how higher order invariants also give rise to lower bounds on the stabilizing number. We do not discuss the definition of these invariants but instead refer the interested reader to~\cite{CochranOrrTeichner} and~\cite[Section~2]{CochranHarveyLeidy}.

\begin{remark} \label{rem:HigherOrder}
Given a link $L$, we describe how von Neumann-Cheeger-Gromov $\rho$--invariants of the $0$--framed surgery $M_L$ produce lower bounds on $\op{sn}(L)$.
Assume that~$L$ is sliced by a nullhomologous disk~$D$ in $D^4 \csum n S^2 \times S^2$. 
If a group homomorphism $\phi \colon \pi_1(M_L) \to\Gamma $ factors through $\pi_1(D^4 \csum n S^2 \times S^2 \setminus D)$, then~\cite[Theorem 1.1]{ChaTopologicalMinimal} and an Euler characteristic computation imply that
\[ \rho(M_L,\phi) \leq 2 \beta_2(D^4 \csum n S^2 \times S^2 \setminus D)=4n.\]
The difficulty in applying this result lies in finding representations $\phi$ of $\pi_1(M_L)$ that factor through $\pi_1(D^4 \csum n S^2 \times S^2 \setminus D)$.
\end{remark}

\section{Stabilization and the \texorpdfstring{$4$--genus}{4-genus}}
\label{sec:4Genus}

We recall the relation between the Arf invariant of a knot~$K$ and framings on surfaces bounded by~$K$ using spin structures; see \cite{FreedmanKirby}, \cite[Section~XI.3]{Kirby89} and \cite[Section~11.4]{Scorpan05}. Then we surger a surface down to a disk while stabilizing the ambient manifold. 

\subsection{Stable framings and spin structures.}\label{sub:NotationBunlde}
In this subsection, we briefly recall the definition of the spin bordism group and fix some notations on vector bundles.
\medbreak

A (stable) \emph{spin structure} on a manifold~$M$ is a stable trivialization on the $1$--skeleton of the tangent bundle $TM$ that extends over its $2$--skeleton. 
Here, a spin structure refers to the stable notion, which is customary in bordism theory, and not the unstable one employed in gauge theory.
The bordism group of spin $n$--manifolds is denoted $\Omega_n^{\op{spin}}$. 
If the $n$--manifolds come with a map $f \colon M \to X$ to a fixed space~$X$, then we use $\Omega_n^{\op{spin}}(X)$ to denote the corresponding bordism group over $X$. 
We refer to~\cite[p.~16]{Stong} for details, but note that most of the literature uses the stable normal bundle of~$M$.
We will often specify spin structures on $M$ by indicating stable framings of $TM$.
\begin{remark}\label{rem:Omega2SpinFr}
Since a spin structure is a stable framing of the tangent bundle restricted to the $1$--skeleton that extends over the $2$--skeleton, on a surface it is just a stable framing. In other words, one has~$\Omega_2^{\op{spin}}=\Omega_2^{\op{fr}}$.
\end{remark}

Since bundles play an important role in this section, we start by fixing some notation and terminology.
\begin{remark}
If $\iota \colon \Sigma \looparrowright W$ is an immersion and $\xi$ is a bundle over $W$, then the \emph{restricted bundle} $\xi|_\Sigma$ over $\Sigma$ has the same fibers as $\xi$, but viewed over $\Sigma$. In other words, $\xi|_\Sigma$ is the pullback $\iota^*(\xi)$.
We will mostly consider the case where $\xi=TW$ is the tangent bundle. If we use $\nu_W(\Sigma)$ to denote the normal bundle of $\Sigma$ inside $W$, then we have 
\begin{equation}
\label{eq:RestrictedVersusNormal}
TW|_\Sigma=T\Sigma \oplus \nu_W(\Sigma).
\end{equation}
\end{remark}

Next, we discuss framings. Assume $\Sigma \subset W$ is a $k$--dimensional submanifold of a framed $m$--manifold~$W$. This means that we have fixed a trivialization of $TW$, that is, we have a framing given by sections~$(s_1,\ldots, s_m)$ and an isomorphism of vector bundles~$(s_1, \ldots, s_m) \colon \ul \R^m \xrightarrow{\sim} TW$ given by $((a_1, \ldots, a_m),x) \mapsto \sum_i a_i s_i(x) \in T_xW$.
Suppose in addition we are also given a framing~$(n_1, \ldots, n_{m-k}) \colon \ul \R^{m-k} \xrightarrow{\sim} \nu_W (\Sigma$). We obtain a stable tangential framing of $\Sigma$ by the composition 
\begin{equation}
\label{eq:FramingRestrictedBundle}
\op{fr}_\Sigma \colon   \ul \R^m \xrightarrow{(s_1, \ldots, s_m)} TW|_\Sigma \cong T\Sigma \oplus \nu_W (\Sigma) \xleftarrow{\id_{T\Sigma} \oplus (n_1, \ldots, n_{m-k})} T\Sigma \oplus \ul \R^{m-k}. 
\end{equation}
It is important to note that this stable framing of~$T\Sigma \oplus \ul \R^{m-k}$ depends not only on~$(s_1, \ldots, s_m)$ but also on the frame~$(n_1, \ldots, n_{m-k})$.

To keep the notation on the arrows at bay, we make use of the following notation.
\begin{notation}
\label{not:Sections}
Let $v_1, \ldots, v_m$ be sections of a vector bundle~$\xi$ over a manifold $M$. As above, define the map of vector bundles $\ul \R^m \rightarrow \xi$ by $((a_1, \ldots, a_m),x) \mapsto \sum_i a_i v_i(x)$. We will refer to this map by~$\ul \R^{m}\langle v_1, \ldots, v_m\rangle \rightarrow \xi$, and if this map is an isomorphism, then we write~$\ul \R^{m}\langle v_1, \ldots, v_m \rangle \xrightarrow{\sim} \xi$. 
\end{notation}

An oriented framing~$(s_1, \ldots, s_m)$ of an oriented vector bundle~$\xi$ defines a section of the oriented frame bundle~$\op{Fr}(\xi)$, whose fiber over a point~$x$ consists of all oriented bases of the fiber~$\xi_x$.
We say that two framings of~$\xi$ are \emph{homotopic}, if the associated sections in~$\op{Fr}(\xi)$ are homotopic through sections. If two stable framings~$f,f' \colon \ul \R^{m+k} \xrightarrow{\sim} TM \oplus \ul \R^k$ are homotopic, then the two framed manifolds~$(M,f)$ and $(M,f')$ are framed bordant via a suitable stable framing on~$M\times I$. 
Thus,~$[M,f] = [M,f'] \in \Omega^{\op{fr}}_n$.

For an $m$--dimensional oriented vector bundle over~$M$, the frame bundle~$\op{Fr}(\xi)$ of positively oriented frames is a~$\op{GL}_+(m)$--principal bundle. Consequently, given two framings~$s$ and $s'$, the equation~$s(x)\cdot h(x) = s'(x)$ for every~$x \in M$ defines a function~$h \colon M \to \op{GL}_+(m)$. 
Conversely, given a framing~$s$ and a function~$h \colon M \to \op{GL}_+(m)$, one can construct a new section~$s \cdot h$ by $(s\cdot h)(x) = s(x)\cdot h(x)$. 
Thus, the homotopy classes of sections on an $m$--dimensional bundle over $M$ are in bijection with~$[M, \op{GL}_+(m)] \cong [M, \op{SO}(m)]$.

\begin{remark}\label{rem:BordantHom}
We argue that on~$M=S^1$, two stable framings are bordant if and only if they are homotopic.
The stabilized Lie group framing~$\mathfrak{s}$ defines the only non-trivial element~$[S^1, \mathfrak{s}]$ in~$\Omega^{\op{fr}}_1 \cong \Z_2$; see e.g.~\cite[p.~521]{Scorpan05}.
On the other hand, the above discussion implies that framings of the stable tangent bundle~$TS^1 \oplus \ul \R^{k}$ correspond bijectively with ~$[S^1, \op{GL}_+(k+1)] \cong [S^1, \op{SO}(k+1)]$.
Since this group is isomorphic to~$\Z_2$ for $k \geq 2$, there are also only two homotopy classes of sections: the class of the Lie group  framing~$\mathfrak{s}$, and the nullbordant one.
\end{remark}

\subsection{The Arf invariant of a knot}
We describe equivalent definitions of the Arf invariant of a knot $K$.
The first uses a class determined by the $0$--framed surgery~$M_K$ in $\Omega_3^{\op{spin}}(S^1)$.
The second involves spin surfaces in $D^4$.
\medbreak
Given a knot $K \subset S^3$, our first aim is to define a stable tangential framing on~$M_K$.
Consider the standard embedding~$S^3 \subset \R^4$.
The normal bundle~$\nu_{\R^4} (S^3)$ 
is $1$--dimensional and oriented, so fix a framing~$\ul \R = \nu_{\R^4} (S^3)$. 
The coordinate vector fields therefore give 
a stable tangential framing
\begin{equation}\label{eq:StableS3}
\op{fr}_{S^3} \colon \ul \R^4  \sarr TS^3 \oplus \nu_{\R^4} (S^3) = TS^3 \oplus \ul \R.
\end{equation} 
By restriction, the stable framing $\op{fr}_{S^3}$ defines a stable tangential framing on the exterior $X_K=S^3 \setminus \nu(K)$. 
To extend the framing over the trace of the surgery and therefore obtain a framing on $M_K$, the unique framing on the 2-handle~$D^2 \times D^2$ has to agree on $S^1 \times D^2$ with the framing $\op{fr}_{S^3}$ on~$\nu(K) \cong K \times D^2$. 

\begin{construction}
Trivialize~$\nu(K) \cong K \times D^2$ using the Seifert framing.
This means that the curve~$K \times \{1\} \subset K \times D^2$ is the $0$--framed longitude of $K$, 
	or equivalently, frame the normal bundle~$\nu_{S^3}(K) \cong \ul \R^2 \langle t_S, n_S \rangle$ using the outer normal~$t_S$ and the normal vector field $n_S$ of a Seifert surface $S \subset S^3$~\cite[Proposition~4.5.8]{GompfStipsicz}.

This can be reformulated in terms of tangential framings as follows: the Seifert framing gives a stable framing 
\begin{align*}
\op{fr}_K \colon \ul \R^4 & \xrightarrow{\sim} T(K \times D^2)|_K \oplus \nu_{\R^4} (S^3)|_{K}
=TK \oplus \ul \R^2 \langle t_S,n_S \rangle \oplus \nu_{\R^4} (S^3)|_{K}.  \nonumber
\end{align*}
Since $TK \oplus \ul \R \langle t_S \rangle$ is $TS|_K$, the knot $K$ with the framing~$\op{fr}_K$ is bounded by the Seifert surface, and so is nullbordant.

Inside $D^2 \times D^2$, the circle $S^1 \times \{0\}$ has a stable tangential framing given by
	\[ \op{fr}_{S^1} \colon \ul \R^4 \xrightarrow{\sim} T(D^2 \times D^2)|_{S^1} = T(S^1 \times D^2)|_{S^1} \oplus \ul \R \langle t_{D^2}\rangle = TS^1 \oplus \ul \R^3,\]
where $t_{D^2}$ denotes the outer normal vector of $D^2$. By construction, $S^1$ bounds $D^2$, and so $\op{fr}_{S^1}$ is nullbordant as well.
By Remark~\ref{rem:BordantHom}, two stable tangential framings on $S^1$ are bordant if and only if they are homotopic.
This implies that the framings $\op{fr}_K$ and $\op{fr}_{S^1}$ give rise to a framing
\[\op{af} \colon \ul \R^4 \to TM_K \oplus \ul \R\]
on the $0$--surgery~$M_K = S^3 \setminus \nu(K) \cup_{K \times D^2} S^1 \times D^2 $, which is nullbordant.
\end{construction}
The $0$--surgery together with the framing~$\op{af}$ defines a class~$[M_K, \op{af}] \in \Omega_3^{\text{spin}}(S^1)$, where the map $M_K \to S^1$ classifies the abelianization $\pi_1(M_K) \to \Z$.
\begin{definition}\label{def:ArfKnot}
The \emph{Arf invariant} of~$K$ is $\op{Arf}(K) = [M_K, \op{af}] \in \Omega_3^{\text{spin}}(S^1) = \Z_2$.
\end{definition}

Our goal is to show that $\op{Arf}(K) $ can be computed from an arbitrary spanning surface $\Sigma \subset D^4$, provided it is endowed with an appropriate stable tangential framing. 
As a consequence, we first associate an Arf invariant to an arbitrary closed stably framed surface:
given 
such a~$[\Sigma, f] \in~\Omega_2^\text{spin}$, we construct a quadratic enhancement~$\mu_f$ of the intersection form $\cdot$ of $\Sigma$; this quadratic enhancement is then used to define the Arf invariant of $[\Sigma, f]$.
\begin{construction}\label{const:QuadRef}
Let $\Sigma$ be a closed surface with a stable framing $f \colon \ul \R^4 \to T\Sigma \oplus \ul \R^2$.
Let $\gamma \subset \Sigma$ be an embedded loop with normal vector~$n_\gamma$. 
Using the stable framing $f$ and $T\Sigma=T\gamma \oplus \ul {\R} \langle n_\gamma \rangle,$
we obtain an induced stable framing~$f_\gamma$ on~$\gamma$:
\begin{equation}
\label{eq:FramingLoopInSigma}
f_\gamma \colon \ul \R^{4} \xrightarrow{f} T\Sigma \oplus \ul \R^2 = T\gamma \oplus \ul \R\langle n_\gamma \rangle \oplus \ul \R^2.
\end{equation}
We associate to~$\gamma$ the element~$[\gamma, f_\gamma] \in \Omega_1^{\text{spin}} \cong \Z_2$. 
The resulting element only depends on the homology class of~$\gamma$ and the spin structure on $\Sigma$, which is the trivialization~$f$.
Mapping the loop $\gamma $ to $[\gamma, f_\gamma]$ gives rise to a map~$\mu_{f} \colon H_1(\Sigma; \Z) \to \Z_2$. 
This defines a quadratic refinement of the intersection form~\cite[p.~514]{Scorpan05} meaning that $ \mu_{f}(x+y) - \mu_{f}(x) - \mu_{f}(y) \equiv  x\cdot y \mod 2$
 for any~$x,y \in H_1(\Sigma; \Z)$.

From $\mu_f$, we extract a number~$\op{Arf}(\mu_f) \in \Z_2$ by the following algebraic procedure applied to~$V = H_1(\Sigma; \Z_2)$: given a non-singular quadratic form $(V,\lambda,\mu)$ over $\Z_2$ with $\dim_{\Z_2}V=2n$, the \emph{Arf invariant} can be defined by picking a symplectic basis~$e_1,\ldots,e_n,f_1,\ldots,f_n$ for $(V,\lambda)$, that is~$\lambda(e_i,f_j)=\delta_{ij}$ and~$\lambda(e_i,e_j)=0=\lambda(f_i,f_j)$, and setting~$\operatorname{Arf}(\mu) := \sum_{i=1}^n\mu(e_i)\mu(f_i)$.
\end{construction}

By Remark~\ref{rem:Omega2SpinFr}, specifying a stable framing~$f$ on a surface $\Sigma$ is the same as equipping $\Sigma$ with a spin structure. The following lemma shows that $\op{Arf}(\mu_f)$ only depends on the bordism class~$[\Sigma,f] \in \Omega_2^{\text{spin}}$.
\begin{lemma}\label{lem:ArfFramed}
The Arf invariant defines an isomorphism $\Omega_2^{\op{spin}} \xrightarrow{\sim} \Z_2$, where a bordism class~$[\Sigma, f]$ is mapped to $\op{Arf}(\mu_f)$.
\end{lemma}
\begin{proof} See e.g.~\cite[p.~523]{Scorpan05}. \end{proof}

We say that a collection of 
embedded curves~$\{e_i, f_i\}_{i=1}^g$ in a surface~$\Sigma$ is a \emph{symplectic basis of curves} if all of the following conditions hold:
\begin{enumerate}
        \item $e_i$ is disjoint from both~$e_j$ and $f_j$ for every~$i \neq j$, 
        \item for every~$i$, the curve~$e_i$ intersects $f_i$ transversely
         in exactly one positive intersection point,
        \item $\langle [e_1], [f_1], \ldots, [e_g],[f_g]\rangle = H_1(\Sigma; \Z)$.
\end{enumerate}

\begin{lemma}\label{lem:CutSystem}
	If a closed surface $\Sigma$ admits a 
    stable framing~$f$ such that $\op{Arf}(\mu_f) =~0$, then~$\Sigma$ contains a symplectic basis of curves $\{e_i, f_i\}_{i=1}^{g}$  with~$\mu_{f}(e_i) = 0$ for each~$i$.
\end{lemma}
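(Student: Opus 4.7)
The strategy is to reduce the lemma to a classical statement about quadratic forms over $\Z_2$ and then realize the resulting algebraic symplectic basis by simple closed curves on $\Sigma$.

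First, I would observe that $\mu_f$ descends to $H_1(\Sigma;\Z_2)$. Applying the refinement identity of Construction~\ref{const:QuadRef} to $x=0$ and $y$ replaced by a class of the form $2y$ gives $\mu_f(2y)\equiv 2\mu_f(y)+y\cdot y\equiv 0 \pmod 2$, so $\mu_f(x+2y)\equiv\mu_f(x)\pmod 2$. Thus $\mu_f$ induces a quadratic refinement $\overline{\mu}_f$ of the mod-$2$ intersection form on $H_1(\Sigma;\Z_2)$, and by Definition~\ref{def:ArfFramed} we still have $\op{Arf}(\overline{\mu}_f)=\op{Arf}(\mu_f)=0$.

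Second, I would invoke the classification of non-degenerate quadratic refinements on symplectic $\Z_2$-vector spaces: such a refinement has $\op{Arf}=0$ if and only if it is isometric to an orthogonal sum of hyperbolic planes $\sum_{i=1}^{g}x_iy_i$. In particular, there exists a symplectic basis $\{\bar e_i,\bar f_i\}$ of $H_1(\Sigma;\Z_2)$ with $\overline{\mu}_f(\bar e_i)=\overline{\mu}_f(\bar f_i)=0$. The proof is an easy induction on $g$, splitting off hyperbolic planes and using the additivity of the Arf invariant under orthogonal sum; if at some stage every available isotropic decomposition failed, one could extract a rank-$2$ anisotropic summand whose Arf invariant is $1$, contradicting $\op{Arf}(\overline{\mu}_f)=0$.

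Third, I would lift $\{\bar e_i,\bar f_i\}$ to an integral symplectic basis and realize it geometrically. Since the reduction map $\op{Sp}(2g,\Z)\to\op{Sp}(2g,\Z_2)$ is surjective, applying a suitable integral symplectic automorphism to the standard symplectic basis of $H_1(\Sigma;\Z)$ yields a symplectic basis $\{e_i,f_i\}$ that reduces modulo $2$ to $\{\bar e_i,\bar f_i\}$; since $\mu_f$ factors through $\Z_2$, we get $\mu_f(e_i)=0$. Finally, fix a standard geometric symplectic collection $\{a_i,b_i\}$ on $\Sigma$ (coming, e.g., from the usual $4g$-gon model). The surjection $\op{MCG}(\Sigma)\twoheadrightarrow\op{Sp}(2g,\Z)$ provides a diffeomorphism $\varphi\colon\Sigma\to\Sigma$ with $\varphi_*a_i=e_i$ and $\varphi_*b_i=f_i$, so the images $\varphi(a_i),\varphi(b_i)$ form the required symplectic collection of embedded curves with $\mu_f(\varphi(a_i))=\mu_f(e_i)=0$.

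The only substantive ingredient beyond the algebra is the last realization step, which rests on the classical surjectivity of $\op{MCG}(\Sigma)\to\op{Sp}(2g,\Z)$ together with the existence of a standard geometric symplectic system on $\Sigma_g$; these are standard in surface topology and require no new work here.
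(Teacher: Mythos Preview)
Your proof is correct and follows essentially the same two-step strategy as the paper: first find an algebraic symplectic basis with $\mu_f(e_i)=0$ using $\op{Arf}(\mu_f)=0$, then realize it geometrically via the surjection $\op{MCG}(\Sigma)\twoheadrightarrow\op{Sp}(2g,\Z)$ (the paper cites \cite[Second proof of Theorem~6.4]{Farb12} for this). You simply spell out more of the algebra, passing explicitly through $\Z_2$ and lifting via $\op{Sp}(2g,\Z)\twoheadrightarrow\op{Sp}(2g,\Z_2)$, whereas the paper leaves this implicit. One small slip: in your first paragraph the substitution yielding $\mu_f(2y)\equiv 2\mu_f(y)+y\cdot y$ is $x=y$, not ``$x=0$ and $y$ replaced by $2y$''; the conclusion is unaffected since $y\cdot y=0$ on a closed oriented surface.
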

\begin{proof}
Consider~$H_1(\Sigma;\Z)$ together with its symplectic intersection form and the quadratic enhancement~$\mu_{f}$. 
We assert that $\op{Arf}(\mu_f) = 0$ implies the existence of a symplectic basis~$\{e_i, f_i\}_{i=1}^g$ of~$H_1(\Sigma;\Z)$ with~$\mu_f(e_i) =~0$.
A proof can be found in~\cite[p.502]{Scorpan05}, but we outline the main steps.
First, we can write
\[ \big( H_1(\Sigma; \Z), \lambda, \mu_f \big) \cong a H^+ \oplus b H^- \text{ and } \op{Arf}(\mu_f) = b \text{ mod 2},\]
where $H^+,H^-$ respectively denote the standard $2$--dimension hyperbolic form on~$H \cong \Z\langle e, f\rangle$ equipped with the quadratic refinements~$\mu^+,\mu^-$ fulfilling $\mu^+(e) = 0,\mu^-(f)=1$ and $\mu^-(e) = \mu^-(f) = 1$. 
Since $2H^- \cong 2H^+$ and $b$ is even (thanks to our assumption), we see that
$H_1(\Sigma; \Z) \cong g H^+$, and so we simply define~$e_i, f_i$ to be the pair~$e,f$ in the $i$--th summand~$H^+$, concluding the proof of the assertion.

Any symplectic basis for homology can be realized as a geometric basis of curves~\cite[Second proof of Theorem~6.4]{Farb12}.
\end{proof}

In the case of a locally flat surface $\Sigma \subset D^4$ with boundary a knot $K$, we construct a stable framing~$f$ on~$\Sigma$ such that $\op{Arf}([\wh \Sigma,f]) = \op{Arf}(K)$. Here $\wh \Sigma$ denotes the (abstract) surface obtained by capping off~$\Sigma$ by a disk and, as we explain below, any stable framing of $\Sigma$ extends to~$\wh \Sigma$.
Note that in general, the embedding~$\Sigma \subset D^4$ does not extend to an embedding~$\wh \Sigma \subset D^4$.

\begin{construction} \label{construction:ArfSpinSurface}
Recall that $H_1(D^4 \sm \Sigma; \Z) = \Z$ is generated by a meridian of~$\Sigma$.
Since~$S^1$ is an Eilenberg-MacLane space~$K(\Z,1)$, the correspondence 
\begin{equation} [D^4 \sm \Sigma,S^1] \xrightarrow{\sim} H^1(D^4 \sm \Sigma; \Z) \xrightarrow{\sim} \op{Hom}(H_1(D^4 \sm \Sigma;\Z), \Z) \label{eq:MfdM}
\end{equation}
associates to the homomorphism $ H_1(D^4 \sm \Sigma;\Z) \to \Z$ sending a meridian to~$1$, 
a map~$h \colon D^4 \sm \Sigma \to S^1$. 
Pick a trivialization $\iota \colon \Sigma \times D^2 \to \nu \Sigma$ such that the composition 
        \[ H_1(\Sigma \times \{1\}; \Z) \to H_1(\Sigma \times S^1; \Z) \xrightarrow{\iota_*} H_1(D^4 \sm \Sigma; \Z) \xrightarrow{h_*,\cong} \Z\] 
vanishes.
We assert that by a homotopy of~$h$ near $\Sigma$, we can arrange that~$h$ agrees with the composition
\[ \nu \Sigma \sm \Sigma \xrightarrow{\iota^{-1}}\Sigma \times (D^2 \sm \{0\}) \xrightarrow{\op{pr}_2} (D^2 \sm \{0\}) \to S^1,\]
where the second map is the projection onto the second factor, and the third the projection onto the angle. Since the two following maps agree
\[
        \begin{tikzcd}
                H_1(\nu \Sigma \sm \Sigma ;\Z) \ar[bend right=-19]{r}{h_*} \ar[bend right=20]{r}{h'_*} & H_1(S^1;\Z),
        \end{tikzcd}
\]
we can homotope~$h$ to agree with $h'$ in a neighbourhood of~$\Sigma$, proving our assertion.

Arrange for $h$ to be transverse at~$1$.
Since $h \simeq h'$ near $\Sigma$, we see that $h^{-1}(1) \cup~\Sigma$ is a compact $3$--manifold $M$.
Since the restriction $h|_{S^3 \sm K} \colon S^3 \setminus K \to S^1$ also maps a meridian to~$1$, we observe that~$h|_{S^3\sm K}^{-1}(1) \cup K = M \cap S^3 =: S$ is a surface bounded by $K$. Via a homotopy supported near $S^3$ and in $D^4 \sm \nu \Sigma$, we may assume that $S$ is in fact connected, and therefore a Seifert surface.

Let $n_M$ denote the normal vector of $M$ in $D^4$, which is a section of $\nu_{D^4}(M)$. Denote the outer-normal vector of~$M$ on $\Sigma$ by~$t_M$ (a section of $\nu_M(\Sigma)$). Applying~\eqref{eq:RestrictedVersusNormal} twice to the chain~$\Sigma \subset M \subset D^4$ of codimension~$1$ inclusions, we obtain a stable framing $f $ on $\Sigma$:
\[ f \colon \ul \R^4 \sarr TM|_\Sigma \oplus \ul \R\langle n_M \rangle = T\Sigma \oplus \ul \R^2\langle t_M, n_M\rangle,\]
\end{construction}

Any stable framing of $\Sigma$ 
restricts to the nullbordant stable framing on~$\partial \Sigma$.
Thus we can cap off the boundary component and obtain a stable tangent framing~$f$
on the closed surface~$\wh \Sigma$, which defines an element~$[\wh \Sigma, f] \in \Omega_2^\text{spin}$.
In particular, using Lemma~\ref{lem:ArfFramed}, it defines an Arf invariant.
\begin{proposition}\label{prop:ArfCoincide}
The Arf invariant of the element~$[\wh \Sigma, f] \in \Omega_2^\text{spin}$ coincides with the Arf invariant of~$K$.
\end{proposition}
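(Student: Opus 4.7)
The strategy is to reduce the $3$--dimensional definition of $\op{Arf}(K)$ to a $2$--dimensional one via Pontryagin-Thom, and then to exhibit a spin cobordism inside a $4$--manifold that compares the resulting closed surface with $\wh \Sigma$. Since $\Omega_3^{\op{spin}} = 0$, the splitting $\Omega_3^{\op{spin}}(S^1) \cong \Omega_3^{\op{spin}} \oplus \Omega_2^{\op{spin}} = \Omega_2^{\op{spin}}$ is realised by the Pontryagin-Thom map $[X, \op{fr}, \varphi] \mapsto [\varphi^{-1}(\op{pt})]$ with induced stable framing. Applying this to $[M_K, \op{af}]$ with $\varphi \colon M_K \to S^1$ a generator of $H^1(M_K; \Z)$ chosen so that $\varphi^{-1}(1) = \wh S$, a Seifert surface capped off by the meridian disc of the $0$--surgery solid torus, a direct comparison of definitions identifies the induced framing $f_S$ on $\wh S$ with the output of Construction~\ref{construction:ArfSpinSurface} applied to $S \subset D^4$. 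Thus $\op{Arf}(K) = \op{Arf}(\mu_{f_S})$ and the proposition reduces to showing that $[\wh \Sigma, f] = [\wh S, f_S]$ in $\Omega_2^{\op{spin}}$.

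To establish this equality I will produce a spin cobordism. Let $N$ be the trace of $0$--surgery on $K$, so $\partial N = M_K$, and embed $\wh \Sigma$ in the interior of $N$ as $\Sigma$ together with the core of the $2$--handle. The map $h$ of Construction~\ref{construction:ArfSpinSurface} extends to $\wt h \colon N \sm \nu(\wh \Sigma) \to S^1$ by using the angular coordinate inside the $2$--handle minus a neighbourhood of its core. After a small perturbation ensuring transversality to $1 \in S^1$, the preimage $\wt M := \wt h^{-1}(1)$ is a $3$--manifold with $\partial \wt M = \wh \Sigma \sqcup (-\wh S)$: on $\partial \nu(\wh \Sigma)$ the map $\wt h$ is the angular coordinate of the disc bundle and cuts out a section, while on $\partial N = M_K$ it restricts to $\varphi$ and cuts out $\wh S$.

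Since $N \sm \nu(\wh \Sigma)$ deformation retracts onto $D^4 \sm \nu(\Sigma)$ (the deleted $2$--handle retracts onto its attaching region), it is stably parallelisable, and I fix a stable parallelisation extending the standard one on $D^4 \sm \nu(\Sigma)$. Combined with the normal vector to $\wt M$ in $N$ pulled back from $d\theta$, this equips $\wt M$ with a stable tangential framing. The two copies of Construction~\ref{construction:ArfSpinSurface} performed in parallel, one for $\wh \Sigma \subset \wt M \subset N$ and one for $\wh S \subset \wt M \subset N$, then show that the induced boundary framings coincide with $f$ and $f_S$ respectively. Hence $[\wh \Sigma, f] = [\wh S, f_S]$ in $\Omega_2^{\op{spin}}$, which completes the argument.

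The main obstacle is this last step, the careful matching of outward-normal conventions. Construction~\ref{construction:ArfSpinSurface} fixes specific choices of $t_M$ and $n_M$, and the analogous choices made inside $N$ (for $\wt M$ in $N$, and for $\wh \Sigma$ and $\wh S$ as boundary components of $\wt M$) must agree with them on the nose. Since $\Omega_2^{\op{spin}} = \Z_2$, a single mismatched sign would flip the bordism class and cannot be absorbed abstractly; these conventions must be tracked explicitly, although the checks are local once the homotopy equivalence $N \sm \nu(\wh \Sigma) \simeq D^4 \sm \nu(\Sigma)$ is used to transport the framing.
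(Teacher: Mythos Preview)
Your argument is correct and follows essentially the same idea as the paper: reduce $\op{Arf}(K)$ to the class of a capped-off Seifert surface $[\wh S, f_S]$ via the Pontryagin--Thom isomorphism $\Omega_3^{\op{spin}}(S^1)\cong\Omega_2^{\op{spin}}$, and then exhibit a framed $3$--manifold cobording $\wh\Sigma$ to $\wh S$.

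The only real difference is organizational. The paper does not pass to the surgery trace $N$; it simply reuses the $3$--manifold $M\subset D^4\sm\nu(\Sigma)$ already built in Construction~\ref{construction:ArfSpinSurface}. Its boundary is the \emph{connected} surface $\Sigma\cup_\partial(-S)$, which is therefore spin-nullbordant; one then observes that the induced stable framing on the separating curve $K$ is the nullbordant one, so surgering along $K$ separates the boundary into $\wh\Sigma\sqcup(-\wh S)$ without changing the bordism class. Your extension of $h$ to $N\sm\nu(\wh\Sigma)$ and your $\wt M$ amount to performing this surgery ambiently (the extra piece $\wt M\sm M$ is just the cocore of that surgery), so the two $3$--manifolds differ by a collar. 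The paper's version is a bit more economical since it avoids re-extending the ambient framing over the $2$--handle and the attendant deformation-retraction argument, but what you wrote is fine and the sign-tracking caveat you flag is exactly the one point that needs care in either version.
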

\begin{proof}
Recall that we constructed the $3$--manifold $M \subset D^4$ as $M=h^{-1}(1) \cup \Sigma$, where the map $h \colon D^4 \sm \Sigma \to S^1$ was described in Construction~\ref{construction:ArfSpinSurface}. 
Recall furthermore that~$\partial M = \Sigma \cup_\partial S$, where~$S$ denotes the Seifert surface~$h|_{S^3 \sm K}^{-1}(1)$ of~$K$.

The stable framing $f_M \colon\ul \R^4 \sarr TM \oplus \ul \R$ induces the stable framing~$f$ on $\Sigma$ and a stable framing $-f_S$ on $S$. We deduce the following equation in $\Omega_2^{\op{spin}}$: 
\[ 0 =\partial [ M,  f_M] = [\Sigma \cup_\partial -S, f \cup -f_S].\]
Note that the induced stable tangent framing on the separating curve $\partial$ is nullbordant, and so extends over a disk. This allows us to perform surgery on~$\partial$. The result of this surgery is a disjoint union $\widehat{\Sigma} \sqcup \widehat{S}$ of closed surfaces obtained by capping off $\Sigma$ and $S$. 
Since the Arf invariant is a spin bordism invariant, we obtain:
\[ 0= [\Sigma \cup_\partial -S, f \cup -f_S]=\op{Arf}\Big(\Big[\wh \Sigma, \wh f\Big]\Big)-\op{Arf}\big(\big[\widehat{S},f_{\wh S}\big]\big).\]
It remains to argue that
$\op{Arf}([\widehat{S}, f_{\wh S}])=\op{Arf}(K)$.
The Atiyah-Hirzebruch spectral sequence provides an isomorphism~$\Phi \colon \Omega_3^\text{spin}(S^1) \xrightarrow{\sim} \Omega_2^{\op{spin}}$.

Given $x :=[Y,f, g] \in \Omega_3^\text{spin}(S^1)$ with $f$ a framing and~$g \colon Y \to S^1$, the closed surface underlying $\Phi(x)$ is $g^{-1}(\pt)$
where $\pt \in S^1$ is a point to which $g \colon M \to S^1$ is transverse.
If we return to our case,
this correspondence sends the class~$\big[M_K, f, \bar h\big]$ to~$\big[\widehat{S}, f_{\widehat{S}}\big]$, where $\bar h \colon M_K \to S^1$ is the obvious extension 
of~$h|_{S^3\sm \nu K}$ to the zero surgery~$M_K$. 
Now the preimage~$\bar h^{-1}(1)$ is exactly the capped of Seifert surface~$\widehat S$.
This concludes the proof of the lemma.
\end{proof}

\subsection{The stabilizing number and the \texorpdfstring{$4$--genus}{4-genus}}
Let $\gamma \subset D^4$ be a closed embedded curve. In order to perform surgery on $\gamma$, we must specify a framing of $\nu_{D^4}(\gamma) \in \Omega_1^{\op{fr}}$. We show that if this bordism class is zero, then the result of the surgery is $D^4 \csum S^2 \times S^2$.
\medbreak
Let $\gamma$ be curve on a spanning surface~$\Sigma \subset D^4$ for a knot~$K\subset S^3$.
A choice of a trivialization of the tubular neighborhood~$\nu( \gamma) \cong \gamma \times D^3$ gives rise to a stable framing of~$T\gamma$ by
$T (\gamma \times D^3) = TD^4|_{\gamma \times D^3}$.
In \eqref{eq:FramingLoopInSigma}, we constructed a stable framing~$f_\gamma$ of~$T \gamma$ by
\[ f_\gamma \colon \ul \R^4 \sarr T\gamma \oplus \ul \R^3\langle n_\gamma, n_M, t_M\rangle. \]
where $n_\gamma, n_M$ and $t_M$ are respectively sections of $\nu_\Sigma(\gamma), \nu_M(D^4)$ and $\nu_M(\Sigma)$. 
These vectors also give a framing of the normal bundle $\nu_{D^4}(\gamma)$.

Since we have a framed embedded circle in $D^4$, we can perform $1$--surgery on $\gamma$. We write $\op{surg}\big(D^4, \gamma, f_\gamma\big) = \op{surg}\big(D^4, \gamma, (n_\gamma, t_M, n_M) \big)$ for the effect of the surgery.

\begin{lemma}\label{lem:TrivialSurgery}
Let $\gamma \subset W$ be an embedded loop in a $4$--manifold~$W$. Assume that $\gamma$ is contained in a $4$--ball and let $\op{triv}_\gamma \colon \gamma \times D^3 \sarr \nu_{D^4} (\gamma)$ be a trivialization inducing the framing~$f_\gamma \colon \ul \R^4 \sarr T\gamma \oplus \ul \R^3$. If $[\gamma, f_\gamma] \in \Omega_1^{\text{fr}}$ is trivial, then the result of surgery along~$(\gamma, f_\gamma)$ is
\[  \op{surg}(W, \gamma, f_\gamma) = W \csum S^2 \times S^2.\]
\end{lemma}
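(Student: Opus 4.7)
The plan is to reduce the statement to a local computation inside a $4$--ball and then identify the result with a standard model. Since $\gamma$ is contained in a $4$--ball $B \subset W$, the tubular neighborhood $\nu_W(\gamma)$ sits inside $B$, so surgery modifies only $B$. Writing $W = (W \setminus \op{Int}(B)) \cup_{S^3} B$, I would immediately get
\[ \op{surg}(W, \gamma, f_\gamma) = (W \setminus \op{Int}(B)) \cup_{S^3} \op{surg}(B, \gamma, f_\gamma), \]
so it suffices to show $\op{surg}(B, \gamma, f_\gamma) \cong D^4 \csum S^2 \times S^2$; gluing this back onto $W \setminus \op{Int}(B)$ then produces $W \csum S^2 \times S^2$.

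Next, I would reduce to a standard model. Inside a $4$--ball any two embedded circles are ambient isotopic, so I can move $\gamma$ onto a standard unknot $\gamma_0 \subset B$ bounding an obvious disk $D_0$, carrying the framing along. The homotopy classes of trivializations of $\nu_B(\gamma_0) \cong S^1 \times \R^3$ are classified by $[S^1, \op{SO}(3)] \cong \Z_2$, and (after stabilizing by a tangent vector to $\gamma_0$ and by the framing of the ambient $4$--ball) this $\Z_2$ is identified with $\Omega_1^{\op{fr}}$. Consequently the hypothesis $[\gamma, f_\gamma] = 0$ implies that the transported framing is homotopic to the $0$--framing $f_0$, namely the framing obtained by extending a trivialization of $TD_0$ to the normal bundle. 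Since the diffeomorphism type of the surgery depends only on the isotopy class of the embedded circle together with the homotopy class of its normal framing, this yields $\op{surg}(B,\gamma,f_\gamma) \cong \op{surg}(B,\gamma_0,f_0)$.

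Finally, I would compute the model surgery by embedding it into the round-handle decomposition
\[ S^4 = (S^1 \times D^3) \cup_{S^1 \times S^2} (D^2 \times S^2). \]
The core circle $\gamma_0 = S^1 \times \{0\}$ is an unknot; it bounds $D^2 \times \{\op{pt}\}$ whose normal framing extends $f_0$, so $\gamma_0$ carries exactly the $0$--framing. Performing the $0$--framed surgery swaps $S^1 \times D^3$ for $D^2 \times S^2$, giving
\[ (D^2 \times S^2) \cup_{S^1 \times S^2} (D^2 \times S^2) \;=\; (D^2 \cup_{S^1} D^2) \times S^2 \;=\; S^2 \times S^2. \]
Arrange $\gamma_0$ to lie inside a sub--$4$--ball $B \subset S^4$ disjoint from the complementary $4$--ball $B' := S^4 \setminus \op{Int}(B) \cong D^4$. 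Since surgery leaves $B'$ untouched, $S^2 \times S^2 = B' \cup_{S^3} \op{surg}(B, \gamma_0, f_0)$, and therefore $\op{surg}(B, \gamma_0, f_0) \cong S^2 \times S^2 \setminus \op{Int}(D^4) \cong D^4 \csum S^2 \times S^2$, as needed.

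The main obstacle in this outline is the identification performed in the second paragraph: one must verify carefully that the stable tangential framing $f_\gamma = (n_\gamma, t_M, n_M)$ of the lemma corresponds under stabilization to the same class in $\Omega_1^{\op{fr}} \cong \pi_1(\op{SO}(3))$ that governs the surgery, and that homotopic normal framings yield diffeomorphic surgeries. Once this bookkeeping is made rigorous, the rest is the standard round-handle description of $S^4$ and the locality of surgery.
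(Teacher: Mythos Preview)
Your proposal is correct and follows essentially the same route as the paper: reduce to a local computation by placing $\gamma$ in a $4$--ball (the paper phrases this as the $S^4$ summand of $W \csum S^4$), isotope to a standard unknot, use that the nullbordant framing corresponds to the product framing, and identify the surgery via the decomposition $S^4 = S^1 \times D^3 \cup D^2 \times S^2$. One small remark: in your final paragraph you write $f_\gamma = (n_\gamma, t_M, n_M)$, but the lemma is stated for an arbitrary framing $f_\gamma$ inducing the trivial class in $\Omega_1^{\op{fr}}$; that particular frame only appears in the later application of the lemma, not in its hypothesis.
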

\begin{proof}
Since the loop $\gamma$ is contained in a $4$--ball, we may assume that $\gamma$ is contained the~$S^4$ summand of a connected sum~$W \csum S^4$.
Consequently, it is enough to verify that~$\op{surg}(S^4, \gamma, f_\gamma) = S^2 \times S^2$. Isotope~$\gamma$ to the unit circle~$U \subset \R^2 \times \{0\} \subset \R^4 \cup \{\infty\}$. 
The nullbordant framing is
represented by the coordinate vectors~$f_\gamma = (e_1, \ldots, e_4)$. This framing is induced by the trivialization~$(n_U, e_3,e_4)$ of $\nu_{D^4}(U)$, where~$n_U$ is the normal vector of $U \subset \R^2$. 

This gives exactly the decomposition~$S^4 = S^1 \times D^3 \cup D^2 \times S^2$, and thus after replacing
~$S^1 \times D^3$ with $D^2 \times S^2$, we obtain~$S^2 \times S^2$.
\end{proof}

As we shall see below, performing surgery on half a symplectic basis of a genus~$g$ locally flat surface $\Sigma \subset D^4$ gives rise to a disk in $D^4 \csum g S^2 \times~S^2$. We must verify that this disk is nullhomologous.
\medbreak
Let $\Sigma \subset D^4$ be a properly embedded surface and let $\gamma \subset \Sigma$ be an embedded loop in $\Sigma$. Pick an embedded disk~$C \subset D^4$ with~$\partial C = \gamma$, which intersects~$\Sigma$ normally along the boundary~$\partial C$ and transversely in~$\Int C$; see Figure~\ref{fig:gammaBundle} below.
 Such a disk~$C$ is called a \emph{cap} for $\gamma \subset \Sigma$ if the algebraic intersection number~$\Sigma \cdot C$ is zero. In the literature, caps are usually neither assumed to be embedded nor to have winding number zero with $\Sigma$~\cite{CochranOrrTeichner, FreedmanQuinn, CST}.
 Since all our caps will have both these properties, we permit ourselves this shortcut.

\begin{remark}
A cap for $\gamma \subset \Sigma$ always exists: first, pick any spanning disk~$C \subset D^4$ for~$\gamma$, since $\gamma$ is unknotted in $4$--space such a disk exists. Now via an isotopy of~$C$ arrange that $C$ intersects $\Sigma$ normally in~$\gamma$. After a further isotopy supported in~$D^4 \sm \nu(\gamma)$, we may assume that~$\Int C$ intersects $\Sigma$ transversely.
        After these isotopies, the disk~$C$ will still be embedded. 
Now arrange that~$C \cdot \Sigma=0$, by spinning~$C$ around~$\partial C$~\cite[Figure 11.14]{Scorpan05}. The resulting embedded disk is the required~cap.
\end{remark}

Next, we use a cap to define a second stable framing on an embedded loop $\gamma \subset \Sigma$ and compare it to the stable framing $f_\gamma$ from Construction~\ref{const:QuadRef}.

\begin{construction}\label{construction:FramingFromCap}
Consider a cap~$e \colon D^2 \hookrightarrow D^4$, which is called~$C$. 
 Pick polar coordinates~$(r,\theta) \mapsto re^{i\theta}$ on~$D^2$ and consider the vector field~$\partial_r$ on $\partial D^2$; see Figure~\ref{fig:Outward}.
 
\begin{figure}[!htb]
\includegraphics{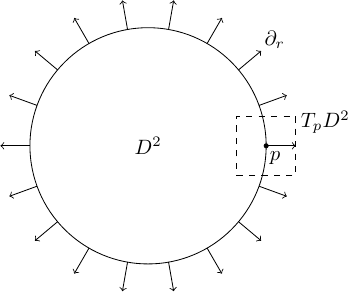}
\caption{The outward pointing vector field on $D^2$.}
\label{fig:Outward}
\end{figure} 
 
Since a cap~$C$ is in particular an embedding near the boundary,
         the push-forward~$e_*\partial_r$ defines a vector field~$t_C \in TC|_{\partial C} \subset TD^4|_{\partial C}$, which is tangential to~$C$ and which we call an \emph{outer-normal} vector field of~$C$; see Figure~\ref{fig:gammaBundle}.
As $C$ intersects $\Sigma$ normally along $\partial C$, we deduce that $t_C$ is also a section of $\nu_{D^4}(\Sigma)|_{\partial C}$.
As $\nu_{D^4}(\Sigma)|_{\partial C}$ is a $2$--dimensional oriented vector bundle, the section $t_C$ can be complemented with a linearly independent section~$v_\gamma(C)$ such that $(t_C, v_\gamma(C))$ is a positively oriented frame of $\nu_{D^4}(\Sigma)|_{\partial C}$. The section~$v_\gamma(C)$ is unique up to homotopy,
but note that it might not extend to a section of $\nu_{D^4}(C)$.
Denote the normal vector of~$\gamma$ in~$\Sigma$ by~$n_\gamma$.
Using~\eqref{eq:RestrictedVersusNormal} twice, we obtain a stable framing of~$T\gamma$~as
\begin{equation}\label{eq:capFraming}
	\op{fr}_\gamma(C) \colon \ul \R^4 \sarr TD^4 = T\Sigma|_\gamma \oplus \ul \R^2\langle t_C, v_\gamma(C)\rangle = T\gamma \oplus \ul \R^3\langle n_\gamma, t_C, v_\gamma(C)\rangle.
\end{equation}
\end{construction}

We wish to relate the stable framing $\op{fr}_\gamma(C)$ of $\gamma$ to the stable framing $f_\gamma$ that we defined in~\eqref{eq:FramingLoopInSigma}. Recall the notion of homotopic framings from Section~\ref{sub:NotationBunlde}.
The next lemma relates the stable tangential framings $f_\gamma$ and $\op{fr}_\gamma(C)$ of $\gamma$.

\begin{lemma}\label{lem:capFraming}
	Let $C$ be a cap for an embedded loop $\gamma \subset \Sigma$. The stable framing~$\op{fr}_\gamma(C)$ of~\eqref{eq:capFraming} is homotopic to the stable framing~$f_\gamma$ on~$\gamma$ defined in~\eqref{eq:FramingLoopInSigma}.
\end{lemma}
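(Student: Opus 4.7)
The plan is to reduce the comparison of $f_\gamma$ and $\op{fr}_\gamma(C)$ to a comparison of framings of the oriented rank-$2$ bundle $\nu_{D^4}(\Sigma)|_\gamma$, and then to identify their relative winding number with the algebraic intersection number $C \cdot \Sigma$. Indeed, both stable framings arise from the common decomposition $TD^4|_\gamma = T\gamma \oplus \ul\R\langle n_\gamma\rangle \oplus \nu_{D^4}(\Sigma)|_\gamma$; the first two summands $T\gamma \oplus \ul\R\langle n_\gamma\rangle = T\Sigma|_\gamma$ are framed identically. The only difference lies in the framing of the last summand: $f_\gamma$ uses $(t_M, n_M)$, inherited from the ambient framing of $D^4 \subset \R^4$ via $\Sigma \subset M \subset D^4$, while $\op{fr}_\gamma(C)$ uses $(t_C, v_\gamma(C))$. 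Two oriented frames of the oriented rank-$2$ bundle $\nu_{D^4}(\Sigma)|_\gamma \to \gamma$ differ by a loop in $\SO(2)$, classified by a winding number $w \in \pi_1(\SO(2)) = \Z$, and showing that $w = 0$ will complete the proof.

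The key step is the identification $w = C \cdot \Sigma$. I would use $(t_M, n_M)$ to trivialise $\nu_{D^4}(\Sigma)$ over $\Sigma$, thereby modelling a tubular neighbourhood $\nu(\Sigma) \subset D^4$ as $\Sigma \times \R^2$. After a small isotopy one can assume the cap $C$ lies entirely in $\nu(\Sigma)$ and meets $\Sigma = \Sigma \times \{0\}$ along $\gamma$ together with interior transverse points $p_1, \ldots, p_k$ with signs $\epsilon_i \in \{\pm 1\}$. Composing the inclusion $C \hookrightarrow \nu(\Sigma)$ with the second-factor projection then gives a smooth map $\pi \colon C \to \R^2$ whose zero locus is exactly $C \cap \Sigma$. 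Because $C$ extends tangentially from $\gamma$ in the direction $t_C$, pushing $\gamma$ slightly into $C$ along $t_C$ yields a parallel loop $\gamma_\delta$ with $\pi(\gamma_\delta)$ equal, up to rescaling, to the section $t_C \colon \gamma \to \R^2 \setminus \{0\}$; by construction, the winding of $\pi(\gamma_\delta)$ around the origin is precisely $w$. On the other hand, applying the standard degree formula to $\pi$ restricted to the subsurface of $C$ bounded by $\gamma_\delta$ identifies this winding number with the signed count of interior preimages of the origin, namely $\sum_i \epsilon_i = C \cdot \Sigma$. Since $C$ is a cap, this intersection number vanishes, so $w = 0$, and the two framings of $\nu_{D^4}(\Sigma)|_\gamma$ are homotopic; consequently so are $f_\gamma$ and $\op{fr}_\gamma(C)$.

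The main obstacle is the identification $w = C \cdot \Sigma$; the remainder is a direct unpacking of the definitions of $f_\gamma$ and $\op{fr}_\gamma(C)$. This identification relies on the tubular-neighbourhood model, which requires verifying that $C$ can be isotoped into $\nu(\Sigma)$ without altering its algebraic intersection number with $\Sigma$, together with a careful application of the degree formula for the proper map $\pi$ relative to its non-zero boundary values on $\gamma_\delta$.
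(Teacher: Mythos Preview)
Your reduction to comparing the two framings $(t_M,n_M)$ and $(t_C,v_\gamma(C))$ of the oriented rank-$2$ bundle $\nu_{D^4}(\Sigma)|_\gamma$, and hence to a single winding number $w\in\pi_1(\SO(2))\cong\Z$, is exactly right and is also how the paper begins. The gap is in your computation of $w$.

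You claim that after a small isotopy the cap $C$ lies entirely in a tubular neighbourhood $\nu(\Sigma)\cong\Sigma\times\R^2$, and then run a degree argument for the projection $\pi\colon C\to\R^2$. But $\nu(\Sigma)$ deformation retracts onto $\Sigma$, so $H_1(\nu(\Sigma);\Z)\cong H_1(\Sigma;\Z)$, and the curve $\gamma$ is by hypothesis part of a symplectic basis of $H_1(\Sigma;\Z)$, hence homologically nontrivial. Therefore $\gamma$ bounds no $2$-chain whatsoever in $\nu(\Sigma)$, and in particular the disk $C$ can \emph{never} be isotoped rel boundary into $\nu(\Sigma)$. The obstacle you flag at the end is not a technicality to be verified; it is genuinely false.

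The paper's remedy is to work in $D^4\setminus\Sigma$ rather than in $\nu(\Sigma)$. Since $H_1(D^4\setminus\Sigma;\Z)\cong\Z\langle\mu_\Sigma\rangle$, any section $s$ of $\nu_{D^4}(\Sigma)|_\gamma$ determines a push-off $\gamma^s$ and a linking number $\lk(\gamma^s,\Sigma)\in\Z$, and one checks $w=\lk(\gamma^{t_C},\Sigma)-\lk(\gamma^{t_M},\Sigma)$. The first linking number vanishes because, after removing a collar and tubing together the meridians created by the transverse interior intersections of $C$ with $\Sigma$ (which cancel in pairs since $C\cdot\Sigma=0$), one obtains a surface in $D^4\setminus\nu(\Sigma)$ bounding $\gamma^{-t_C}$. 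The second vanishes because $\gamma^{-t_M}\subset M$, and $M\subset D^4\setminus\Sigma$. This linking-number argument replaces your degree argument and does not require confining $C$ to a tubular neighbourhood.
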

\begin{proof}
We first recall the definition of $f_\gamma$. 
Proceed as in the proof of Proposition~\ref{prop:ArfCoincide} to construct a $3$--manifold~$M \subset D^4$, whose boundary is $\Sigma \cup_\partial S$ for some Seifert surface~$S$ of~$K$. 
Recall that $n_\gamma, n_M$ and $t_M$ respectively denote sections of $\nu_\Sigma(\gamma), \nu_M(D^4)$ and $\nu_M(\Sigma)$. The stable framing~$f_\gamma$ was defined as:
\[ f_\gamma \colon \ul \R^4 \sarr T\gamma \oplus \ul \R^3\langle n_\gamma, t_M, n_M\rangle. \]

First, we show that the two vector fields~$t_M$ and $t_C$ are homotopic
as $1$--frames of~$\nu_{D^4}(\Sigma)|_\gamma$. 
Consider its disk bundle~$D(\nu_{D^4}(\Sigma)|_\gamma)$, whose total space is a solid torus $V$, and the circle bundle~$S(\nu_{D^4}(\Sigma)|_\gamma)$.
The section $t_C$ of the $S^1$--bundle~$S(\nu_{D^4}(\Sigma)|_\gamma)$ is of the form $g \cdot t_M$ for a suitable map $g \colon \gamma \to S^1$.
We will compute the homotopy class $[\gamma, S^1] \cong \Z$ of $g$.
So consider push-offs $\gamma^{t_M}$, $\gamma^{t_C}$ of $\gamma$ into the $t_M$ and $t_C$ direction. 
These are curves on the $2$--torus~$\partial V$, which are homotopic
to $\gamma$ in the solid torus $V$; they are \emph{longitudes} of $V$.
As in the case of knot, the homology class of longitudes can be recovered from the linking number.
Indeed, note that~$H_1( D^4 \sm \Sigma ; \Z) \cong \Z\langle \mu_\Sigma \rangle$ is generated by the meridian of $\Sigma$, and define the linking number~$\lk(\gamma^+, \Sigma)$ of a curve~$\gamma^+$ that is disjoint from $\Sigma$ by $[\gamma^+] = \lk(\gamma^+, \Sigma) [\mu_\Sigma]$. 
Note that $[\gamma^{g\cdot t_M}] = [\gamma^{t_M}] + \deg (g) [\mu_\Sigma]$.
Deduce that if $\lk(\gamma^{t_M}, \Sigma)$ and $\lk(\gamma^{t_C}, \Sigma)$ agree, then $\deg (g) = 0$, and a homotopy of $g$ to the constant map will produce a homotopy between
the sections $t_M$ and~$t_C$.
We now compute these linking numbers.

Recall from~\eqref{eq:MfdM}, that $M \sm \Sigma$ is the 
inverse image of a point under $h \colon D^4 \sm \Sigma \to~S^1$.
The definition above implies that $\lk(\gamma^{t_M}, \Sigma) [S^1]  = h_* (\gamma^{t_M})$.
Since $\gamma^{-t_M} \subset~M$, the push-off~$\gamma^{-t_M}$ has the property that $h_* (\gamma^{-t_M}) = 0$, and so $\lk (\gamma^{t_M}, \Sigma) = \lk(\gamma^{-t_M}, \Sigma) =~0$.

Now we compute~$\lk(\gamma^{t_C}, \Sigma)$. Once we remove the neighborhood~$\Sigma$ from~$D^4$, the disk~$C$ will be punctured. One boundary component will be~$\gamma^{-t_C}$, and there will be one extra boundary component for each interior intersection point of~$C$ with~$\Sigma$, and these extra boundary components will form meridians of~$\Sigma$. Since a cap~$C$ has $C \cdot \Sigma = 0$ and $\Sigma$ is connected, these meridians can be canceled: construct a surface~$C' \subset D^4 \sm \nu (\Sigma)$ by tubing pairs of meridians corresponding to intersection points of opposite signs together. 
The surface~$C'$ has a single boundary component~$\gamma^{-t_C}$, and we conclude that
\[ \lk(\gamma^{t_C}, \Sigma) = \lk(\gamma^{-t_C}, \Sigma) = C' \cdot \Sigma = 0. \]

Since the two linking numbers agree, the $1$--frames $t_C$ and $t_M$ are homotopic.
Because $\nu_{D^4}(\Sigma)|_\gamma$ is a $2$--dimensional oriented bundle, there is an essentially 
unique way to complement a vector to a (positive) frame, and so $(t_C,n_C)$ and $(t_M,n_M)$ are also homotopic as frames of $\nu_{D^4}(\Sigma)|_\gamma$.
\end{proof}

\begin{figure}
\includegraphics{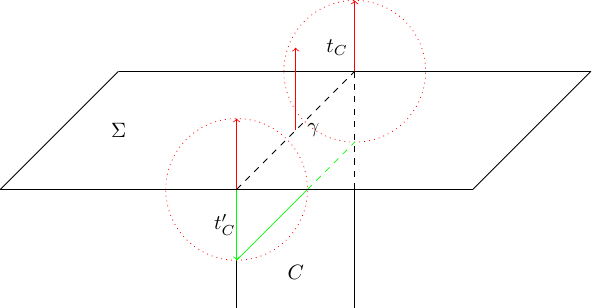}
\caption{A neighborhood of a point on $\gamma$. The dotted circles extend into the suppressed normal direction of $\Sigma$.}
\label{fig:gammaBundle}
\end{figure}

The next result is Theorem~\ref{thm:StablGenusIntro} from the introduction, whose proof is similar to \cite[Theorem 2]{FreedmanKirby}.
\begin{theorem}
\label{thm:StablGenus}
If $K$ is a knot with~$\op{Arf}(K) = 0$, then $sn(K) \leq g_4^{\op{top}}(K)$. 
\end{theorem}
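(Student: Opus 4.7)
The plan is to start with a locally flat surface $\Sigma \subset D^4$ of minimal genus $g = g_4^{\op{top}}(K)$ bounding $K$, and reduce its genus to zero by performing $g$ ambient surgeries, each of which stabilises the ambient $4$--ball by a single $S^2 \times S^2$ summand. Applying Construction~\ref{construction:ArfSpinSurface} equips the capped-off closed surface $\wh\Sigma$ with a stable tangential framing $f$, which by Proposition~\ref{prop:ArfCoincide} satisfies $\op{Arf}([\wh\Sigma,f]) = \op{Arf}(K) = 0$. Lemma~\ref{lem:CutSystem} therefore supplies a symplectic collection $\{e_i, f_i\}_{i=1}^g$ of embedded curves on $\wh\Sigma$ with $\mu_f(e_i) = 0$ for all $i$; after a small isotopy the $e_i$ can be assumed to lie inside $\Sigma$ and to be pairwise disjoint.

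For each $e_i$ I would then choose a pairwise disjoint family of embedded caps $C_i \subset D^4$, that is locally flat embedded discs with $\partial C_i = e_i$ meeting $\Sigma$ normally along the boundary and transversely in the interior, with $C_i \cdot \Sigma = 0$. Such caps exist because $D^4$ is contractible, and the algebraic intersection with $\Sigma$ is killed by spinning around $\partial \Sigma$ as in the paper. By Lemma~\ref{lem:capFraming} the cap-induced framing $\op{fr}_{e_i}(C_i)$ is homotopic to the framing $f_{e_i}$ of Construction~\ref{const:QuadRef}; since $\mu_f(e_i) = 0$, this framing is nullbordant in $\Omega_1^{\op{fr}}$. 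Performing simultaneous ambient surgery on the $e_i$ along these cap framings, Lemma~\ref{lem:TrivialSurgery} identifies the resulting ambient $4$--manifold with $D^4 \csum g\, S^2 \times S^2$, and $\Sigma$ is transformed into a surface $D$ by excising a bicollared neighbourhood of each $e_i$ in $\Sigma$ and replacing it with two parallel copies of $C_i$. Each such modification drops the genus by one, so $D$ is a locally flat disc bounding $K$.

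The remaining task is to show that $D$ is nullhomologous in $D^4 \csum g\, S^2 \times S^2$. The group $H_2(D^4 \csum g\, S^2 \times S^2, S^3;\Z)$ is freely generated by the $2g$ spheres coming from the stabilisations: in each $S^2 \times S^2$ factor these can be taken to be the surgery co-core sphere $\Sigma_i^{\op{co}}$, and the sphere $\wh C_i$ obtained by capping $C_i$ with the core of the attached $2$--handle. The intersection $D \cdot \Sigma_i^{\op{co}}$ vanishes because the two parallel copies of $C_i$ inserted in $D$ meet $\Sigma_i^{\op{co}}$ with opposite signs, while $D \cdot \wh C_i$ equals $C_i \cdot \Sigma = 0$, which was arranged at the outset. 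Hence $D$ is nullhomologous and witnesses $\sn(K) \leq g$.

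The hardest step will be producing the embedded caps $C_i$ with the required properties in the topological category: pairwise disjoint locally flat discs bounded by the $e_i$ with zero algebraic intersection with $\Sigma$. This requires some $4$--dimensional embedding input beyond simple-connectivity of $D^4$, together with the careful framing book-keeping of Lemma~\ref{lem:capFraming} to match the cap-framing with the spin-framing $f_{e_i}$ inherited from $\Sigma$, which is precisely where the hypothesis $\op{Arf}(K) = 0$ enters.
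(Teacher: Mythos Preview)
Your proposal follows the paper's strategy, but there is a conflation of two different surgeries that breaks the nullhomology argument as written.

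After ambient surgery on $e_i \subset D^4$ with the framing $(n_{e_i}, t_{C_i}, v_{e_i}(C_i))$, one removes $e_i \times D^3$ and glues in $D^2 \times S^2$. The surface $\Sigma$ meets $e_i \times D^3$ only in the annulus $e_i \times D^1$ in the $n_{e_i}$--direction, and after surgery this annulus is replaced by the two \emph{core disks} $D^2 \times \{\pm n_{e_i}\}$ lying inside the new $D^2 \times S^2$ piece---\emph{not} by parallel copies of the cap $C_i$. The cap $C_i$ sits in $D^4 \setminus \nu(e_i)$ and serves only to pin down the surgery framing; it never becomes part of the surgered surface. What you describe (``excising a bicollar and replacing it with two parallel copies of $C_i$'') is cap--contraction on $\Sigma$, which leaves the ambient $D^4$ unchanged and produces an \emph{immersed} disk with double points at $\op{Int}(C_i) \cap \Sigma$. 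Your computation of $D \cdot \Sigma_i^{\op{co}}$ is therefore argued from the wrong picture. The correct reason is that the two core disks $D^2 \times \{\pm n_{e_i}\}$ hit $\{\pt\} \times S^2$ in a cancelling pair of points; for the dual class, the paper caps $C_i^-$ with the disk $D^2 \times \{-t_{C_i}\}$ to form a sphere $a_i$, and $D \cdot a_i = C_i \cdot \Sigma = 0$ because $D$ misses that capping disk entirely.

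A second point: you require the caps $C_i$ to be pairwise disjoint and identify this as the hardest step. In the paper's argument no such disjointness is needed. Each cap is consumed only locally, near its own $e_i$, to produce the framing via Lemma~\ref{lem:capFraming}; since the curves $e_i$ are already disjoint, the resulting framings are independent and the ambient surgeries can be performed simultaneously regardless of how the $C_i$ interact away from the $e_i$. A single embedded cap for a given $e_i$ exists simply because circles unknot in $D^4$, so this is not the bottleneck you anticipate.
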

\begin{proof}
Let~$\Sigma \subset D^4$ be a locally flat surface of genus~$g$.
Endow $\Sigma$ with the stable framing~$f$ described in Construction~\ref{construction:ArfSpinSurface}, so that $[\wh \Sigma,f]=\op{Arf}(K)$, as explained in Proposition~\ref{prop:ArfCoincide}.
Since $[\wh \Sigma,f]=\op{Arf}(K)=0$, Lemma~\ref{lem:CutSystem} implies that
$\Sigma$ contains a symplectic collection of curves $\lbrace \gamma_i,f_i \rbrace$ with $\mu_f(\gamma_i)=0$.
Pick a cap~$C_i$ for each curve~$\gamma_i$.

Note that the framing $(n_{\gamma_i},t_{C_i},v_{\gamma_i}(C_i))$ is a framing for $\gamma_i \subset D^4$, which restricts to a framing $n_{\gamma_i}$ for $\gamma_i \subset \Sigma$. Performing these (compatible) surgeries on $\gamma_i$ yields
\begin{align*} 
\Sigma' = \op{surg}\big(\Sigma, \gamma_i, (n_{\gamma_i})\big) &&\text{ and }&&
	W' = \op{surg}\big(D^4, \gamma_i,(n_{\gamma_i}, t_{C_i}, v_{\gamma_i}(C_i) )\big).
\end{align*}
Because the first vector of both framings is $n_{\gamma_i}$, the
surgered surface~$\Sigma'$ sits as a submanifold in~$W'$. The surface~$\Sigma'$ is a disk bounded by $K$.

Furthermore, the ambient manifold~$W'$ is~$D^4 \csum g S^2 \times S^2$:
since $\mu_f(\gamma_i) = 0$, the (normal) framing~$(n_{\gamma_i}, t_M, n_M)$ give rise to the nullbordant tangential framing of~$\gamma_i$. By Lemma~\ref{lem:capFraming}, $(n_{\gamma_i}, t_M, n_M)$ is 
homotopic to $(n_{\gamma_i}, t_{C_i}, v_{\gamma_i}(C_i) )$, and so the latter (normal) framing also gives rise to the nullbordant tangential framing of~$\gamma_i$. By Lemma~\ref{lem:TrivialSurgery}, the result~$W'$ of surgery along $(n_{\gamma_i}, t_{C_i}, v_{\gamma_i}(C_i) )$ is $D^4 \csum g S^2 \times S^2$. 

We have to check that the disk~$\Sigma'$ is nullhomologous in $W'=D^4 \csum g S^2 \times S^2$.
To show this, it is enough to show that $\Sigma'$ intersects algebraically zero with $2g$ linearly independent classes in $H_2(D^4 \csum g S^2 \times S^2;\Z)$.
We now construct these classes.
Surgery on $\gamma \subset D^4$ replaces $\gamma_i \times D^3$ with $D^2 \times S^2_i$.
The first $n$ linearly independent classes are $b_i:=[\{ \op{pt}_i \}\times S^2_i]$, where $\{ \op{pt}_i \} \in S^2_i$.

We verify that $\Sigma' \cdot b_i=0$ for $i=1,\ldots, g$. Note that $\Sigma'$ already intersects $\{ \op{pt}_i \} \times S^2_i$ transversely and:
\begin{equation}
\label{eq:IntersectLinIndep}
 \Sigma' \cdot \big( \{ \pt_i \} \times S^2_i \big) = 
\big( D^2 \times S^0 \big) \cdot \big( \{ \pt_i \} \times S^2_i \big) = S^0 \cdot S^2_i = 0. 
\end{equation}
We now construct $n$ more classes $a_i$ such that $a_i \cdot b_j=\delta_{ij}$ and $a_i \cdot \Sigma'=0$.
As we mentioned above, this is enough to show that $\Sigma'$ is nullhomologous.

The surgery along $\gamma_i$ is performed by removing $\gamma_i \times D^3$.
This $\gamma_i \times D^3$ was obtained by trivializing the normal bundle $\nu_{D^4}(\gamma_i)$ by $n_\gamma,t_{C_i},v_{\gamma_i}(C_i)$.
We think of~$n_\gamma,t_{C_i},v_{\gamma_i}(C_i)$ as an orthonormal system of coordinates for $D^3 \subset \R^3$. 
The surgery on $\gamma_i$ replaces $\gamma_i \times D_i^3$ by $D^2 \times S^2_i$.
Consider the $2$--disk $D_i^2:=D^2 \times \lbrace -t_{C_i} \rbrace \subset D_i^2 \times S^2_i $; this is an embedded disk in $W'$ whose boundary is the push-off~$\gamma_i^-$ of $\gamma_i$ in the $-t_{C_i}$ direction. Define $C_i^- := D^4 \setminus \Int \nu_{D^4}(\gamma) \cap C_i$, which is $C_i$ minus a boundary collar. The boundary~$\partial C_i^-$ is exactly $\gamma_i^-$. 
Consider the homology class 
\[a_i:=[ D^2_i \cup_{\gamma_i^-} C_i^-] \in H_2(W';\Z).\]

We argue that $a_i \cdot b_j=\delta_{ij}$ and $a_i \cdot \Sigma'=0$.
Note that $D_i^2$ meets $\lbrace \pt_i \rbrace \times S^2_i$ in a single point, namely $( \{ \pt_i \}, -t_{C_i})$ and so indeed $a_i \cdot b_j=\delta_{ij}$.
To compute the intersection~$a_i \cdot \Sigma'$, note that $\Sigma'$ is disjoint from $D^2_i$, and so $a_i \cdot \Sigma' = C \cdot \Sigma'$. Since~$C$ is a cap, it intersects $\Sigma$ zero algebraically, we get
\[ a_i \cdot \Sigma' =C \cdot \Sigma'= C \cdot \Sigma = 0. \]
As we have seen above, these equalities 
and~\eqref{eq:IntersectLinIndep} are enough to show that $\Sigma'$ is nullhomologous.
We have therefore constructed our nullhomologous slice disk for~$K$ in $D^4 \csum g S^2 \times S^2$, concluding the proof of the theorem.
\end{proof}

\appendix
\section{The satellite formula for the Casson-Gordon \texorpdfstring{$\sigma$}{sigma} invariant.} \label{Appendix}
As we reviewed in Subsection~\ref{sub:CGDef}, Casson and Gordon defined a signature defect~$\sigma(K,\chi)$ \cite{CassonGordon1,CassonGordon2} and a Witt class $\tau(K,\chi)$ \cite{CassonGordon1}.
Litherland proved a satellite formula for $\tau(K,\chi)$ \cite[Theorem 2]{Litherland} and Abchir proved a satellite formula for~$\sigma(K,\chi)$ \cite[Theorem 2, Case 2; Equation (4)]{Abchir}. While Litherland's formula  involves the signature of the companion knot, in some cases Abchir's formula does not.

The next example shows that something is missing from Abchir's formula.

\begin{example}\label{ex:Motivation}
Set $R:=9_{46}$ and consider the knot $K:=R(J_1,J_2)$ depicted in Figure~\ref{fig:KJ1J2} below. The $2$--fold branched cover of both $R$ and $K$ has first  homology~$\Z_3 \oplus \Z_3$. Set $\omega:=e^{2\pi i/3}$. 
Use Akbulut-Kirby's description~\cite{AkbulutKirby79} of the $2$--fold branched cover in terms of a surgery diagram.
For the $\Z_3$--valued character that maps each generator to $1$, this surgery diagram and the surgery formula for the $\sigma$--invariant~\cite[Theorem 6.7]{CimasoniFlorens} imply that
\[ \sigma(K,\chi)=\sigma(R,\chi)+2\sigma_{J_1}(\omega)+2\sigma_{J_2}(\omega).\]
On the other hand, Abchir's satellite formula~\cite[Theorem 2, Case 2]{Abchir} implies that $\sigma(K,\chi)=\sigma(R,\chi)$. The Levine-Tristram signature terms do not appear in this expression. 
\end{example}

\begin{figure}[!htb]
\includegraphics{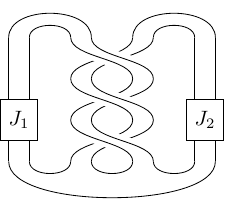}
	\caption{The satellite knot $R(J_1,J_2)$ described in Example~\ref{ex:Motivation}.}
\label{fig:KJ1J2}
\end{figure}

Theorem~\ref{thm:Abchir} below gives a corrected version of the formula, which agrees with the computation in Example~\ref{ex:Motivation}.
In Section~\ref{sub:BranchedSatellite}, we describe some constructions that appear in the proof of Theorem~\ref{thm:Abchir}; this proof is carried out in Section~\ref{sub:SatelliteProof}.
In Section~\ref{sub:NullitySatellite}, we prove a winding number zero satellite formula for the Casson-Gordon nullity.

\begin{notation}
	For a knot~$J$, we write $X_J$ for the exterior, $M_J$ for the $0$--framed surgery, and $\Sigma_n(J)$ for the $n$--fold branched cover of $J$. 
	The meridian $\mu_J$ of $J$ generates the groups $H_1(X_J; \Z) \cong \Z$ and $H_1(M_J; \Z) \cong \Z$, and the quotient homomorphisms~$H_1(X_J; \Z) \to \Z_n$ and $H_1(M_J; \Z) \to \Z_n$ give rise to $n$--fold covers denoted by $X_n(J)$ and $M_n(J)$.
While the longitude $\lambda_J$ of $J$ lifts to a loop in $X_n(J)$, the meridian~$\mu_J$ does not. 
	We use~$\widetilde{\mu}_J$ to denote the lift of~$\mu_J^n$.
	Finally, we use $\op{Ch}_{n}(J)$ to denote the set of characters $H_{1}(\Sigma_n(J);\Z) \to \Z_d$ for all $d$.
	 For a character $\chi \in \op{Ch}_{n}(J)$, we abbreviate $\sigma(\Sigma_n(J),\chi)$ by~$\sigma_n(J,\chi)$. 
\end{notation}

\subsection{Branched covers of satellite knots}
\label{sub:BranchedSatellite}
Let $P(K,{\gamma})$ be a satellite knot with pattern $P$, companion $K$, infection curve ${\gamma} \subset X_P$ and winding number~$w=~\ell k(P,{\gamma})$. Set $h:=\operatorname{gcd}(n,w)$ and use $\mu_{\gamma}$ for the meridian of ${\gamma}$ inside $X_P$. We recall the description of the branched cover $\Sigma_n(P(K,{\gamma}))$ in terms of $\Sigma_n(P)$ and~$X_{n/h}(K)$.
\medbreak
Consider the covering map~$\pi_n \colon X_n(P) \to X_{P}$. Use~$\phi \colon \pi_1(X_{P}) \to \Z$ for the abelianization map. Since $\phi({\gamma}) = w$ and the subgroup $w\cdot \Z_n \subset \Z_n$ has index~$h$, the set~$\pi_n^{-1}({\gamma})$ consists of $h$ components~${\gamma}_i$ for~$i=1,\ldots,h$. 
We refer to the~${\gamma}_i$ as \emph{lifts} of the infection curve ${\gamma}$ to the cover~$X_n(P)$.
Compared to $X_n(P)$, the submanifold $X_n(P) \sm \cup_{i=1}^h \nu ({\gamma}_i)$ has~$h$ additional boundary components~$\partial \nu ({\gamma}_i)$.
For each $i$, we frame $\partial \nu ({\gamma}_i)$ by lifting the framing of~$\partial \nu ({\gamma})$, that is we pick a circle~$\wt \mu^i_{\gamma} \subset \partial \nu ({\gamma}_i)$ covering~$\mu_{\gamma}$, and~$\wt \lambda^i_{\gamma} \subset \partial \nu ({\gamma}_i)$ covering~$\lambda_{\gamma}$.
Also, pick circles~$\wt \mu_K$ and $\wt \lambda_K$ in $\partial X_{n/h}(K)$ that cover the meridian and the longitude of $K$. 

Following Litherland~\cite[p.337]{Litherland}, the next lemma describes a decomposition of the branched cover $\Sigma_n(P(K,{\gamma}))$.

\begin{lemma}\label{lem:Decomposition}
	Let $P(K,{\gamma})$ be a satellite knot with pattern $P$, companion $K$, infection curve ${\gamma}$ and winding number $w=\ell k(P,{\gamma})$. Set $h:=\operatorname{gcd}(n,w)$.
	Take $h$ copies of $X_{n/h}(K)$, labeled by $X_{n/h}^i(K)$.
	Write $\wt \mu_K^i$ and $\wt \lambda_K^i$ for the
	curves~$\wt \mu_K$ and $\wt \lambda_K$ in the $i$--th copy $X_{n/h}^i(K)$, and ${\gamma}_1,\ldots,{\gamma}_h$ for lifts of ${\gamma}$ to $X_n(P)$.
	Then one has the following decomposition:
\[ \Sigma_n\big(P(K,{\gamma})\big)= \Big( \Sigma_n(P) \setminus \bigcup_{i=1}^h \nu({\gamma}_i) \Big) \cup \bigsqcup_{i=1}^h X_{n/h}^i(K), \]
where the identification $\partial \big( \Sigma_n(P) \setminus \bigcup_{i=1}^h \nu ({\gamma}_i) \big) \cong \bigsqcup_{i=1}^h \partial X_{n/h}^i(K)$ identifies $\wt \lambda^i_{\gamma} \sim \widetilde{\mu}^i_K$ and~$\wt \mu^i_{{\gamma}} \sim~\widetilde{\lambda}^i_K$.
\end{lemma}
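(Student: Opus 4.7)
The plan is to unwind the satellite, cyclic cover, and branched cover constructions in turn. First I would decompose the satellite exterior in the standard way as
\[ X_{P(K,\eta)} = \big( X_P \sm \nu(\eta) \big) \cup_T X_K, \]
where the gluing torus $T = \partial \nu(\eta) = \partial X_K$ identifies $\mu_\eta \sim \lambda_K$ and $\lambda_\eta \sim \mu_K$. The abelianization $\phi \colon \pi_1(X_{P(K,\eta)}) \to \Z_n$ sending $\mu_P = \mu_{P(K,\eta)}$ to $1$ then defines the $n$--fold cyclic cover, and $\Sigma_n(P(K,\eta))$ is obtained from this cyclic cover by gluing in a solid torus along the preimage of $\partial \nu(P(K,\eta)) \subset X_P \sm \nu(\eta)$. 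Since this filling takes place entirely on the pattern side, it converts $X_n(P)$ into $\Sigma_n(P)$ and leaves the companion pieces unchanged, so the task reduces to computing the $\phi$--cyclic cover of each piece.

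On the pattern side, the value $\phi(\eta) = w$ means that the image of $\pi_1(\eta)$ in $\Z_n$ is the cyclic subgroup $\langle w \rangle$ of order $n/h$, so $\eta$ has $h$ preimages $\eta_1, \ldots, \eta_h$ in $X_n(P)$, each an $(n/h)$--fold cover. Removing tubular neighbourhoods gives the piece $X_n(P) \sm \bigcup_i \nu(\eta_i)$. On the companion side, the gluing forces $\phi(\mu_K) = \phi(\lambda_\eta) = w$ and $\phi(\lambda_K) = \phi(\mu_\eta) = 0$, so the induced $\Z_n$--cover of $X_K$ corresponds to $\mu_K \mapsto w$, has image $\langle w \rangle$ of order $n/h$, and therefore breaks into $h$ connected components $X_{n/h}^i(K)$, each isomorphic to the $(n/h)$--fold cyclic cover of $X_K$.

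Finally I would match up the boundary identifications. Because $\phi(\mu_\eta) = 0 = \phi(\lambda_K)$, the curves $\mu_\eta$ and $\lambda_K$ each lift to $h$ genuine loops, namely the $\wt \mu_\eta^i$ and $\wt \lambda_K^i$ from the statement; being degree--one lifts, they are identified by the satellite gluing component by component as $\wt \mu_\eta^i \sim \wt \lambda_K^i$. The curves $\lambda_\eta$ and $\mu_K$ instead have $\phi$--value $w$, so it is the $(n/h)$--th powers $\lambda_\eta^{n/h}$ and $\mu_K^{n/h}$ that lift to loops; these lifts are the $\wt \lambda_\eta^i$ and $\wt \mu_K^i$, and the cover of the gluing $\lambda_\eta \sim \mu_K$ raised to that power yields $\wt \lambda_\eta^i \sim \wt \mu_K^i$. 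The main bookkeeping obstacle is matching the labels $i = 1, \ldots, h$ on the two sides: the $\Z_n$--deck group permutes both sets of $h$ components cyclically and transitively, so one can choose the labeling on the companion side so that the $i$--th boundary on the pattern side glues to the $i$--th copy of $X_{n/h}(K)$. Combining these identifications with the two preceding paragraphs gives the stated decomposition.
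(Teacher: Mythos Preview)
Your proposal is correct and follows essentially the same approach as the paper: decompose the satellite exterior, pass to the $n$--fold cyclic cover piece by piece, and fill in the solid torus on the pattern side to pass from $X_n$ to $\Sigma_n$. You actually supply more detail than the paper does---in particular the computation $\phi(\mu_K)=w$, $\phi(\lambda_K)=0$ explaining why the companion side breaks into $h$ copies of $X_{n/h}(K)$, and the discussion of which boundary curves lift to loops versus $(n/h)$--fold covers---where the paper simply asserts that ``the pieces are glued exactly as stated.''
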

\begin{proof} 
Observe that $X_{P(K,{\gamma})} = X_P \sm \nu({\gamma}) \cup X_K$, where $\partial \nu({\gamma})$ is glued
to $\partial \nu (K)$ 
via~$\mu_{\gamma} \sim~\lambda_K$ and $\lambda_{\gamma} \sim \mu_K$. 
Now consider the covering map~$X_n\big(P(K, {\gamma})\big)~\to~X_{P(K, {\gamma})}$. 
Note that $X_n(P(K,{\gamma}))$ contains a single copy of $X_n(P)$. The $h$ tori~$\partial \nu ( {\gamma}_i )$
 separate~$X_n(P)$ from the $h$ copies of $X_{n/h}(K)$, which we denoted by~$X_{n/h}^i(K)$.
The pieces~$X_n(P)$ and $X_{n/h}^i(K)$ are glued exactly as stated in the lemma.
This gives the following decomposition:
\[ X_n\big(P(K,{\gamma})\big)= \Big( X_n(P) \setminus \bigcup_{i=1}^h \nu({\gamma}_i) \Big) \cup \bigsqcup_{i=1}^h X_{n/h}^i(K).\]
To obtain the corresponding decomposition for $\Sigma_n\big(P(K,{\gamma})\big)$, fill the remaining boundary component with a solid torus. 
\end{proof}

\subsection{The satellite formula for the Casson-Gordon \texorpdfstring{$\sigma$--}{sigma }invariant.}
\label{sub:SatelliteProof}

Inspired by~\cite[proof of Lemma~2.3]{CochranHarveyLeidy}, we construct a cobordism $C$ between~$\Sigma_n(P(K, {\gamma}))$ and $\Sigma_n(P) \sqcup_i M^i_{n/h}(K)$ in order to relate the subsequent signature defects.
\begin{construction}\label{const:Cobordism} 
	Let $\lbrace{\gamma}_i \rbrace$ be the components of the preimage of ${\gamma}$ in $\Sigma_n(P)$; we still refer to them as \emph{lifts} of ${\gamma}$.
	Pick a tubular neighborhood~$\nu ({\gamma}_i) \cong S^1 \times D^2$ with meridian~$\wt \mu^i_{\gamma}$ and longitude~$\wt \lambda^i_{\gamma}$.  
	Since~$M_K$ is obtained as $X_K \cup S^1 \times D^2$ by identifying the meridian of the solid torus $S^1 \times D^2$ with $-\lambda_K$ and the longitude with $\mu_K$, the choice of the coefficient system on $M_K$ and $X_K$
	implies that the corresponding cover of $M_K$ is obtained as 
	\[  M_{n/h}(K) = X_{n/h}(K) \cup S^1 \times D^2, \]
where we glue in such a way that the meridian of $S^1 \times D^2$ is mapped to $-\wt \lambda_K$, and the longitude to $\wt \mu_K$.
Use $V_i \subset M_{n/h}^i(K)$ to denote the lift of this solid torus to~$M_{n/h}^i(K)$ for $i=1,\ldots,h$.
	The cobordism~$C$ is 
	defined by attaching a round~$1$--handle, i.e.\ as the quotient 
	\[ C := \Sigma_n(P) \times [0,1] \cup \bigcup_{i=1}^h \Big(M_{n/h}^i(K) \times [0,1]\Big)  \Big/ \sim,\]
	where the relation~$\sim$ identifies the solid torus $\nu({\gamma}_i) \times \{1\} \subset \Sigma_n(P) \times \{ 1 \}$ with the solid torus $V_i \times \{1\} \subset M_{n/h}^i(K)\times \lbrace 1 \rbrace$ via the diffeomorphisms~$\phi_i$ defined by $\phi_i\big(\wt \mu^i_{\gamma}\big) = \wt \lambda^i_K$ and $\phi_i\big({\gamma}_i) = \wt \mu^i_K$; here recall that the ${\gamma}_i$ are lifts of the infection curve ${\gamma}$.
            The bottom boundary of~$C$ is $\Sigma_n(P) \sqcup_{i}\, M_{n/h}^i(K)$ and the top boundary is~$\Sigma_n(P(K, {\gamma}))$ by Lemma~\ref{lem:Decomposition}; see Figure~\ref{fig:SchematicC}.
\begin{figure}
\includegraphics{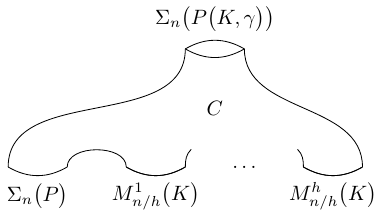}
\caption{A schematic of the cobordism~$C$.}
\label{fig:SchematicC}
\end{figure}

\end{construction}

Given a space $X$, write~$\op{Ch}(X)$ for the set of homomorphisms~$H_1(X;\Z)\to \Z_d$, and for a continuous map~$f \colon X \to Y$, write $f^* \colon \op{Ch}(Y) \to \op{Ch}(X)$ for the pullback. 
Note that if $f$ induces a surjection on $H_1$, then $f^*$ is injective.
Next, we describe the characters in $\op{Ch}(\partial C)$ that extend over $H_1(C;\Z)$.

Let $f_P\colon \Sigma_n(P) \sm \bigcup_i \nu({\gamma}_i) \hookrightarrow \Sigma_n(P(K, {\gamma}))$ be the inclusion induced by the decomposition of Lemma~\ref{lem:Decomposition}. 
Similarly, write~$f_{K,i} \colon X_{n/h}^i(K) \hookrightarrow \Sigma_n(P(K, {\gamma}))$. 
Also, write~$g_P \colon \Sigma_n(P) \sm \bigcup_i \nu({\gamma}_i) \hookrightarrow \Sigma_n(P)$ and~$g_{K,i} \colon X^i_{n/h}(K) \hookrightarrow M^i_{n/h}(K)$ for the 
evident inclusions. 

The next lemma describes the characters on $H_1(\partial C;\Z)$ that extend to the whole bordism~$C$; it also gives a description of the set~$\op{Ch}_n(P(K,{\gamma}))$; compare with~\cite[Lemma~4]{Litherland}. 
\begin{lemma} 
\label{lem:CorrespondenceCharacters}
	There exists a unique map~$\phi$ such that the diagram below commutes: 
	\[ 
	\begin{tikzcd}[row sep=1.3cm]
		\op{Ch}_n\big(P(K, {\gamma})\big) \ar[r,"\phi"] \ar{d}[swap]{\bsm f_P^{*} \\ f_{K,i}^{*} \esm}   & \op{Ch}_n(P) \oplus_i \op{Ch}(M^i_{n/h}(K)). \ar[dl, "g_P^{*} \oplus g_{K,i}^{*}"]\\
		\op{Ch}\big(\Sigma_n(P) \sm \bigcup_i \nu({\gamma}_i)\big) \oplus_i \op{Ch}(X^i_{n/h}(K))&
	\end{tikzcd}
	\]
	This map~$\phi$ has the following properties:
	\begin{enumerate}	
		\item the map~$\phi$ is injective and induces a bijection onto the set
			\[ E = \Big\{ (\chi_P,\{ \chi_i \} ) \in \op{Ch}_n(P) \oplus_i \op{Ch}(M^i_{n/h}(K)) \colon
			\chi_P({\gamma}_i) = \chi_i(\wt \mu^i_K ) \Big\}, \]
where ${\gamma}_1,\ldots,{\gamma}_h$ denote lifts of the infection curve ${\gamma}$;
		\item\label{lem:ExtensionCobordism} for $\phi(\chi) = (\chi_P, \{ \chi_i \})$, there exists a character~$\chi_C \colon H_1(C; \Z) \to \Z_d$ on the cobordism~$C$ of Construction~\ref{const:Cobordism} that restricts to $\chi$ on the top boundary and to $(\chi_P, \{\chi_i\})$ on the bottom boundary.
	\end{enumerate}
\end{lemma}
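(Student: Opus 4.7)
The plan is to deduce the whole statement from a Mayer--Vietoris computation, tracking which classes are nullhomologous in which pieces. The key homological facts I will use are: the meridian $\wt{\mu}_\eta^i$ bounds a disc in $\nu(\eta_i)$, hence is nullhomologous in $\Sigma_n(P)$; the longitude $\wt{\lambda}_K^i$ is nullhomologous in $X^i_{n/h}(K)$ because $\lambda_K$ bounds a Seifert surface in $X_K$ and lifts; and under the identifications of Lemma~\ref{lem:Decomposition}, these two curves are identified in $\Sigma_n(P(K,\eta))$.

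First, I would establish uniqueness and construct $\phi$. The inclusion $g_P$ fills in solid tori around the $\wt{\mu}_\eta^i$, so it induces a surjection on $H_1$ and $g_P^*$ is injective; the same argument shows $g_{K,i}^*$ is injective. Hence $\phi$, if it exists, is uniquely determined by the commutativity of the diagram. To construct $\phi(\chi)$, I restrict $\chi$ via $f_P^*$ and $f_{K,i}^*$, then extend across the filled-in solid tori: the extension $\chi_i$ over the glued-in solid torus of $M^i_{n/h}(K)$ exists because its meridian is $-\wt{\lambda}_K^i$, which is already nullhomologous in $X^i_{n/h}(K)$, and the extension $\chi_P$ over $\nu(\eta_i)$ exists because the meridian $\wt{\mu}_\eta^i$ is identified in $\Sigma_n(P(K,\eta))$ with $\wt{\lambda}_K^i$ and hence evaluates to $0$ under $\chi$. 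Since the filled tori contribute no new $H_1$, these extensions are unique.

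Next, I would verify that the image of $\phi$ equals $E$. Using the identification $\wt{\lambda}_\eta^i \sim \wt{\mu}_K^i$ from Lemma~\ref{lem:Decomposition} and the fact that $\wt{\lambda}_\eta^i$ is a parallel of $\eta_i$ in $\Sigma_n(P)$, the compatibility $\chi_P(\eta_i) = \chi(\wt{\lambda}_\eta^i) = \chi(\wt{\mu}_K^i) = \chi_i(\wt{\mu}_K^i)$ is immediate. Conversely, given $(\chi_P, \{\chi_i\}) \in E$, apply Mayer--Vietoris to the decomposition of $\Sigma_n(P(K,\eta))$ from Lemma~\ref{lem:Decomposition}: the restricted characters $g_P^*(\chi_P)$ and $g_{K,i}^*(\chi_i)$ agree on $\wt{\mu}_\eta^i \sim \wt{\lambda}_K^i$ trivially (both sides vanish) and on $\wt{\lambda}_\eta^i \sim \wt{\mu}_K^i$ by hypothesis, so they glue to a character $\chi$ on $\Sigma_n(P(K,\eta))$ with $\phi(\chi) = (\chi_P, \{\chi_i\})$.

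Finally, for property~(2), I apply Mayer--Vietoris to $C = \Sigma_n(P) \times I \cup \bigsqcup_i M^i_{n/h}(K) \times I$, whose pairwise overlap is a disjoint union of solid tori each with $H_1$ generated by a core $\eta_i$ that the gluing $\phi_i$ identifies with $\wt{\mu}_K^i$. The hypothesis $(\chi_P, \{\chi_i\}) \in E$ is exactly the compatibility condition needed for these characters to glue across the overlap, producing the desired $\chi_C$ on $C$; the uniqueness established in the first step then forces its restriction to the top boundary to be $\chi$. The main obstacle throughout is keeping the several conventions for framings and identifications straight ($\wt{\mu}_J$ is the lift of $\mu_J^n$ rather than a loop covering $\mu_J$; meridians of solid tori versus longitudes; the signs in the gluing of Construction~\ref{const:Cobordism}) so that the Mayer--Vietoris bookkeeping actually yields the stated compatibility condition $\chi_P(\eta_i) = \chi_i(\wt{\mu}_K^i)$ rather than a disguised variant.
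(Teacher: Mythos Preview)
Your proposal is correct and follows essentially the same route as the paper: both arguments hinge on the Mayer--Vietoris bookkeeping for the decomposition of Lemma~\ref{lem:Decomposition}, using that $\wt\lambda_K^i$ bounds a lifted Seifert surface and that $\wt\mu_\eta^i\sim\wt\lambda_K^i$. One small imprecision: in your final step you invoke ``the uniqueness established in the first step'' to conclude that $\chi_C$ restricts to $\chi$ on the top boundary, but what you actually need there is injectivity of $\phi$ (equivalently of $f_P^*\oplus_i f_{K,i}^*$), which comes from $f_P,f_{K,i}$ being surjective on $H_1$---a separate (though parallel) observation from the injectivity of $g_P^*,g_{K,i}^*$ that gave uniqueness of $\phi$; the paper states this explicitly.
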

\begin{proof}
Mayer-Vietoris arguments show that $H_1(M_{n/h}^i(K);\Z)$ and $H_1(\Sigma_{n/h}^i(K);\Z)$ are obtained from $H_1(X_{n/h}^i(K);\Z)$ by respectively modding out the subgroup generated by $\widetilde{\lambda}_K^i$ and $\widetilde{\mu}_K^i$, while $H_1(\Sigma_n(P);\Z)$ is obtained from $H_1(\Sigma_n(P) \setminus \bigcup_i \nu({\gamma}_i) ;\Z)$ by modding out the subgroup generated by $\widetilde{\mu}_{\gamma}^i$.

The uniqueness of the map~$\phi$ follows from the fact that $g_P^*$ and $g_{K,i}^*$ are injective: indeed $g_P$ and $g_{K,i}$ induce surjections on $H_1$.

To establish the existence of $\phi$, we compute the image of~$g_P^*$ and $g_{K,i}^*$. 
The image of~$g_P^*$ is $\{ \chi_P \colon \chi_P( \wt \mu^i_{\gamma} ) = 0\}$.  
The image of~$g_{K,i}^*$ is $\{ \chi_i \colon \chi_i( \wt \lambda^i_K ) = 0\}$. 
What remains to be shown is that the range of 
$\bsm f_P^{*}, & f_{K,i}^{*} \esm^\intercal $
is contained in the sum of these images.
Suppose we are given a $\chi \in\op{Ch}_n\big(P(K, {\gamma})\big)$. 
Note that $\wt \lambda^i_K$ bounds a lift of a Seifert surface of $K$ in $X_{n/h}(K)$. Consequently,
$\chi( \wt \lambda^i_K ) = 0$. By Lemma~\ref{lem:Decomposition} we have $\wt \lambda^i_K \sim \wt \mu^i_{\gamma}$ in $\Sigma_n(P(K, {\gamma}))$ and so we deduce
that~$\chi(\mu^i_{\gamma}) = 0$. This establishes the existence of a map~$\phi$.

The injectivity of~$\phi$ follows from the fact that $f_P^* \oplus_i f_{K,i}^*$ is injective. As above, this follows from the fact that $f_P$ and $f_{K,i}$ induce surjective maps on $H_1$.

We check that $\phi$ surjects onto~$E$ and verify~\eqref{lem:ExtensionCobordism}. Let $(\chi_P,\{\chi_i\}) \in E$ be characters defined on the bottom boundary of the cobordism~$C$ of Construction~\ref{const:Cobordism}.
Since~$\chi_P ({\gamma}_i) = \chi_i(\wt \mu^i_K)$, these characters extend to a character~$\chi_C \colon H_1(C; \Z) \to \Z_d$ on all of $C$.
This proves~\eqref{lem:ExtensionCobordism}, but also produces a character~$\chi \in~\op{Ch}_n(P(K, {\gamma}))$ on the top boundary of $C$.
Since~$\big( \Sigma_n(P) \sm \bigcup_i \nu({\gamma}_i) \big)\times I \subset C$, we obtain that~$f_P^*(\chi) = \chi_P$. Similarly, $f_{K,i}^*(\chi) = \chi_i$. This shows that~$\phi(\chi) = (\chi_P,\{ \chi_i \})$, proving the surjectivity of $\phi$.
\end{proof}

The next result provides a satellite formula for $\sigma(K,\chi)$.
\begin{theorem}\label{thm:Abchir}
Let $n$ be a positive integer.
Let $P(K,{\gamma})$ be a satellite knot with pattern~$P$, companion~$K$, infection curve ${\gamma}$ and winding number $w$. 
Set $h:=~\op{gcd}(n,w)$.
Fix a character $\chi \in \op{Ch}_n(P(K,{\gamma}))$ of primer-power order and let $\chi_P \in \op{Ch}_n(P)$ and $\chi_i \in \op{Ch}(M_{n/h}^i(K))$ be the characters determined by the bijection of Lemma~\ref{lem:CorrespondenceCharacters}.
Define~$J:= \{ 1\leq i \leq h \colon \chi(\gamma_i) = 0\}$.
Then
\[ \Big|\sigma_n(P(K,{\gamma}),\chi) - \sigma_n(P,\chi_P)-\sum_{i=1}^h \sigma(M_{n/h}^i(K),\chi_i)\Big| \leq \#J,\]
where $\#J$ denotes the cardinality of~$J$.

Moreover, if the winding number~$w$ satisfies $w = 0$ mod~$n$, then 
\[ \sigma_n(P(K,{\gamma}),\chi) = \sigma_n(P,\chi_P) + \sum_{i=1}^h \sigma(M_K,\chi_i).\]
\end{theorem}
\begin{proof}
There is an $r>0$ and some (possibly disconnected)
$4$--manifolds $W(P)$ and~$W_1(K),\ldots,W_h(K)$ whose boundary respectively consist of the disjoint unions~$r \Sigma_n(P)$ and $r M_{n/h}^1(K),\ldots,r M_{n/h}^h(K)$, and such that the representations
$\chi_P$ and $\chi_1,\ldots,\chi_h$ respectively extend. 
Glue these $4$--manifolds to $r$ disjoint copies of the cobordism~$C$ of Construction~\ref{const:Cobordism} in order to obtain the (possibly disconnected) $4$-manifold
\[ W:= \bigsqcup_{j=1}^{r}C \cup \bigsqcup_{i=1}^h W_i(K) \cup W(P).\]
By construction, we have $\partial W=r\Sigma_n(P(K,{\gamma}))$.
Invoking the second item of Lemma~\ref{lem:CorrespondenceCharacters}, we know that $\chi=(\chi_P, \{ \chi_i \})$ extends to a character on $H_1(C;\Z)$.
The aforementioned characters therefore extend to a character on $W$.
Therefore,~$W$ can be used to compute $\sigma_n(P(K,{\gamma}),\chi)$.
Set $\dsign^\psi(C):=\sign^\psi(C)-\sign(C)$.
  Several applications of Wall's additivity theorem~\cite{WallAdditivity} imply that
\begin{equation}\label{eq:FormulaDsign}
 \sigma_n(P(K,{\gamma}),\chi)=r \dsign^\psi(C)+\sum_{i=1}^h \sigma(M_{n/h}^i(K),\chi_i)+\sigma_n(P,\chi_P). 
\end{equation}
The theorem will be proved once we show that $|\dsign^\psi(C)| \leq \#J$. 
By construction, the intersection of $\Sigma_n(P) \times [0,1]$ and $\bigsqcup_{i=1}^h M_{n/h}^i(K) \times [0,1]$ inside of $C$ consists of~$\bigsqcup_{i=1}^h {\gamma}_i \times D^2$; here recall that the ${\gamma}_i$ are lifts of the infection curve ${\gamma}$.
Consequently, the Mayer-Vietoris exact sequence for $C$ (where the coefficients are either~$\C^{\psi}$ or $\Z$) gives
\begin{align}\label{eq:MVForC}
\ldots &\to H_2(\Sigma_n(P) \times [0,1])  \oplus \bigoplus_{i=1}^h H_2(M_{n/h}^i(K) \times [0,1]) \to H_2(C) \to \bigoplus_{i=1}^h H_1({\gamma}_i \times D^2) \\
& \to H_1(\Sigma_n(P) \times [0,1]) \oplus \bigoplus_{i=1}^h H_1(M_{n/h}^i(K)  \times [0,1]) \to \ldots. \nonumber
\end{align}
We claim that with~$\Z$--coefficients the map $\iota \colon H_1({\gamma}_i \times D^2) \to H_1(M_{n/h}^i(K)  \times [0,1])$ is injective. 
Thanks to the identification ${\gamma}_i \sim \widetilde{\mu}_K^i$ and since $\widetilde{\mu}_K^i$ generates the $\Z$ summand of $H_1(M_{n/h}^i(K) ;\Z)=H_1(\Sigma_{n/h}^i(K);\Z) \oplus \Z$, 
the map $\iota$ sends ${\gamma}_i$ to $(0,1)$ in $H_1(M_{n/h}^i(K) ;\Z)$ and is therefore injective. This shows that the inclusion induced map~$H_2(\partial C; \Z) \to H_2(C; \Z)$ is surjective and the untwisted signature~$\sign C$ vanishes.

Next, we take care of the twisted case. For $i \notin J$, the coefficient system maps ${\gamma}_i$ to $\omega^{\chi({\gamma}_i)} \neq 1$, and so $H_1({\gamma}_i \times D^2;\C^{\psi})=0$. From~\eqref{eq:MVForC}, deduce that
\[ \dim_\C \op{coker} \big( H_2(\partial C; \C^\psi) \to H_2(C; \C^\psi) \big) \leq  \sum_{i \in J} \dim_\C H_1(\gamma_i \times D^2; \C^\psi).\]
This implies the inequality~$|\sign^\psi C| \leq \# J$. Deduce~$|\dsign^\psi(C)| \leq \#J$, and now the bound in Theorem~\ref{thm:Abchir} follows from~\eqref{eq:FormulaDsign}. This concludes the proof of the first part of the theorem.

Now consider the case where the winding number~$w$ satisfies $w = 0$ mod~$n$.
By assumption, we have $h=n$ and therefore $\sigma(M_{n/h}(K),\chi_i)=\sigma(M_K,\chi_i)$.
If~$\chi({\gamma}_i)$ is trivial, then the whole character~$\chi_i$ vanishes, and so~$H_1(\gamma_i \times D^2; \C^\psi) \to H_1(M_K; \C^\psi)$ is also injective. Thus, for all~$i$ the map~$H_1(\gamma_i \times D^2; \C^\psi) \to H_1(M_K; \C^\psi)$ is injective. As in the untwisted case above, deduce that~$\sign^\psi C = 0$ and so~$\dsign^\psi C = 0$. Since the signature defect vanishes, the case of a winding number~$0$ pattern follows from Equation~\eqref{eq:FormulaDsign}.
\end{proof}

We consider the case where the winding number is zero mod $n$.
\begin{corollary}\label{cor:Abchir0Modn}
Using the same notation as in Theorem~\ref{thm:Abchir}, we assume that the character $\chi$ is of prime power order $d$ and set $\omega:=e^{2\pi i/d}$.
 If the winding number~$w$ satisfies $w=0$ mod $n$, then 
\[ \sigma_n(P(K,{\gamma}),\chi)=\sigma_n(R,\chi)+\sum
_{i=1}^n \sigma_K(\omega^{\chi({\gamma}_i)}).\]
Here, recall that ${\gamma}_1,\ldots,{\gamma}_n$ denote lifts of the infection curve ${\gamma}$.
\end{corollary}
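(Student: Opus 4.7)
The plan is to apply Theorem~\ref{thm:Abchir} directly, using that $w \equiv 0 \pmod n$ forces $h = n$, and then to invoke the standard identification of $\sigma(M_K, \cdot)$ with the Levine-Tristram signature of the companion.

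When $n \mid w$, the integer $h = \gcd(n, w)$ equals $n$, so $n/h = 1$, each cover $M_{n/h}^i(K)$ reduces to $M_K$, and the preimage $\pi_n^{-1}(\eta)$ splits into $n$ disjoint components $\eta_1, \ldots, \eta_n \subset \Sigma_n(P)$. The bijection of Lemma~\ref{lem:CorrespondenceCharacters} sends $\chi$ to a pair $(\chi_P, \{\chi_i\})$, in which $\chi_P \in \op{Ch}_n(P)$ is the character on $\Sigma_n(P)$ (playing the role of ``$\chi$'' in the statement) and $\chi_i \colon H_1(M_K;\Z) \cong \Z \to \Z_d$ is the character determined by $\chi_i(\mu_K) = \chi_P(\eta_i) = \chi(\eta_i)$. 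Substituting $h = n$ into Theorem~\ref{thm:Abchir} yields
\[ \sigma_n(P(K,\eta),\chi) = \sigma_n(P, \chi_P) + \sum_{i=1}^n \sigma(M_K, \chi_i). \]

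The remaining step is to identify $\sigma(M_K, \chi_i)$ with $\sigma_K(\omega^{\chi(\eta_i)})$ for each $i$. This is a well-known $4$-dimensional fact: for any knot $K$ and character $\alpha \colon H_1(M_K;\Z) \cong \Z \to \Z_d$ with $\alpha(\mu_K) = k$, one has $\sigma(M_K, \alpha) = \sigma_K(e^{2\pi i k/d})$. A proof proceeds by pushing a Seifert surface $F$ for $K$ into $D^4$ to obtain a properly embedded surface $F'$ and then comparing the two invariants via Theorem~\ref{thm:OtherAmbientSpaces} (which gives a $4$-dimensional formula for $\sigma_K(\omega)$ in terms of the exterior of $F'$ in $D^4$) and the prescription of $\sigma(M_K, \alpha)$ from Remark~\ref{rem:Sigma3Manifold} (which allows a matching $4$-manifold computation). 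Substituting this identification into the displayed equality produces the claimed formula.

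The main obstacle is essentially bookkeeping: confirming the character correspondence of Lemma~\ref{lem:CorrespondenceCharacters} in the specific form stated (in particular that $\chi_i(\mu_K) = \chi(\eta_i)$), and carrying through the standard $4$-dimensional identification of the two signatures; no substantively new ideas are required beyond Theorem~\ref{thm:Abchir}.
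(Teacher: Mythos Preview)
Your proof is correct and follows the same route as the paper: apply Theorem~\ref{thm:Abchir} with $h=n$ so that each $M_{n/h}^i(K)=M_K$, and then identify $\sigma(M_K,\chi_i)$ with the Levine--Tristram signature $\sigma_K(\omega^{\chi(\eta_i)})$. The paper invokes Casson--Gordon's surgery formula \cite[Lemma~3.1]{CassonGordon1} for this identification and treats the case $\chi(\eta_i)=0$ separately (both sides are zero); your sketch via Theorem~\ref{thm:OtherAmbientSpaces} only applies for $\omega\in\mathbb{T}=S^1\setminus\{1\}$, so you should likewise single out the trivial-character case.
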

\begin{proof}
By assumption, we have $h=n$ and therefore $\sigma(M_{n/h}(K),\chi_i)=\sigma(M_K,\chi_i)$. By definition of $\chi_i$, we have $\chi_i(\mu_K^i)=\chi({\gamma}_i)$.
We claim that $\sigma(M_K,\chi_i)=\sigma_K(\omega^{\chi({\gamma}_i)})$.
If~$\chi({\gamma}_i)$ is trivial, then $\chi_i$ is trivial and so
$\sigma(M_K,\chi_i) = 0 = \sigma_K(1)$.
If~$\chi({\gamma}_i)$ is non-trivial, then the claim follows from the surgery formula for $\sigma(M,\chi)$~\cite[Lemma~3.1]{CassonGordon1}. 
Since $\sigma(M_K,\chi_i)=\sigma_K(\omega^{\chi({\gamma}_i)})$, the result now follows from Theorem~\ref{thm:Abchir}.
\end{proof}

The next example further restricts to the knot $R(J_1,J_2)$ described in Example~\ref{ex:Motivation}. 
The result agrees with the computation made in that example.
\begin{example} \label{ex:AbchirCorrected}
Set $\omega=e^{2\pi i/3}$. The knot $R(J_1,J_2)$ is obtained by $2$ successive satellite operations on $R$ along the curves ${\gamma}_1$, ${\gamma}_2$. 
	These curves generate the homology group $H_1\big(\Sigma_2(R(J_1,J_2));\Z\big)$ and we consider the $\Z_3$--valued character~$\chi$ that maps each of these curves to $1$.
Applying Corollary~\ref{cor:Abchir0Modn} a first time gives 
$\sigma(R(J_1,U))=\sigma(R,\chi)+2\sigma_{J_1}(\omega)$.
 Applying it a second time recovers the computation made in Example~\ref{ex:Motivation}:
\[\sigma(R(J_1,J_2),\chi)=\sigma(R,\chi)+2\sigma_{J_1}(\omega)+2\sigma_{J_2}(\omega).\]
\end{example}

\subsection{A satellite formula for the Casson-Gordon nullity}\label{sub:NullitySatellite}
Before describing a satellite formula for the Casson-Gordon nullity, we state an algebraic lemma whose proof is left to the reader.
\begin{lemma}\label{lem:HomolLem}
The following two statements hold:
\begin{enumerate}
	\item\label{item:HomolLem1} If $A \xrightarrow{\bsm f \\ g \esm } B \oplus C \xrightarrow{(h \ k)} D \to 0$ is exact and $g$ surjective, then
$\ker(g) \stackrel{f}{\to} B \stackrel{h}{\to} D \to 0$
is exact. 
\item\label{item:HomolLem2} If both sequences
$A \xrightarrow{\bsm f \\ 0 \esm} B\oplus C \to D \to 0 \text{ and } A \xrightarrow{f} B \to \widehat B \to 0$ are exact, then the map~$B\oplus C \to D$ descends to an isomorphism $\widehat B \oplus C \xrightarrow{\sim} D$.
\end{enumerate}
\end{lemma}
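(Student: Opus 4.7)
The plan is to prove both parts by elementary diagram chases using only the hypotheses; neither requires any structure beyond the given exact sequences, so this is purely routine homological algebra.

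For part (\ref{item:HomolLem1}), I would first establish surjectivity of $h \colon B \to D$. Given $d \in D$, use surjectivity of $(h\ k)$ to write $d = h(b) + k(c)$, and then use surjectivity of $g$ to pick $a \in A$ with $g(a) = c$. Exactness of the original sequence at $B \oplus C$ applied to $\bsm f\\ g\esm(a) = (f(a), c)$ gives $h(f(a)) + k(c) = 0$, hence $d = h(b - f(a))$. Next I would verify exactness at $B$. For $x \in \ker g$, the element $(f(x), g(x)) = (f(x), 0)$ lies in the image of $\bsm f\\ g\esm$, so $h(f(x)) = h(f(x)) + k(0) = 0$. Conversely, if $h(b) = 0$, then $(b, 0) \in \ker(h\ k) = \op{im}\bsm f\\ g\esm$, so $(b, 0) = (f(a), g(a))$ for some $a \in A$; consequently $a \in \ker g$ and $b = f(a) \in f(\ker g)$.

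For part (\ref{item:HomolLem2}), I would produce the descended map $\widehat B \oplus C \to D$ and check that it is an isomorphism. The composite $h \circ f$ equals $(h\ k) \circ \bsm f\\ 0\esm$, which vanishes by exactness at $B \oplus C$; hence $h$ factors through the cokernel as $\hat h \colon \widehat B \to D$, yielding a well-defined map $(\hat h\ k) \colon \widehat B \oplus C \to D$. Surjectivity is inherited directly from $(h\ k)$. For injectivity, suppose $\hat h(\bar b) + k(c) = 0$ and lift $\bar b$ to $b \in B$; then $(b, c) \in \ker(h\ k) = \op{im}\bsm f\\ 0\esm$, which forces $c = 0$ and $b \in \op{im} f$, i.e.\ $\bar b = 0$ in $\widehat B$.

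The main obstacle is essentially nonexistent: the lemma is purely formal. The only subtlety worth flagging is that part (\ref{item:HomolLem1}) claims exactness at $B$ and $D$ only, \emph{not} at $\ker g$ (where injectivity of $f|_{\ker g}$ would generally fail); the chase above yields precisely what is stated.
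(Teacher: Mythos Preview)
Your proof is correct; the diagram chases for both parts are the standard ones and handle exactly the points that need checking. The paper itself leaves this lemma to the reader, so there is no argument to compare against---your write-up is precisely the routine verification the authors have in mind.
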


Given a character $\chi$, let $J$ denote the set~$\{1 \leq i \leq h \colon \chi({\gamma}_i) = 0\}$.
When $w=0$ mod $n$, we have $n=h$ and so $H_1(M_{n/h}(K);\Z)=H_1(M_K;\Z)=H_1(X_K;\Z)$, and Lemma~\ref{lem:CorrespondenceCharacters} associates to any character~$\chi \in \op{Ch}_n(P(K,{\gamma}))$
a character~$\chi_P \in \op{Ch}_n(P)$ and characters $\chi_i \in~\op{Ch}(X_K)$ for $i=1,\ldots,n$. 

The next result describes a satellite formula for the Casson-Gordon nullity.
\begin{proposition}\label{prop:CGNullitySatellite}
Let $n$ be a positive integer, let $P(K, {\gamma})$ be a winding number~$w\equiv 0 \text{ mod } n$
satellite knot and let $\chi \in \op{Ch}_n(P(K,{\gamma}))$ be a character of prime power order $d$.
 Set~$\omega:=e^{2 \pi i/d}$, and let~$(\chi_P, \{ \chi_i \})$ and $J$ be as above.  
Then
\[ \eta_n(P(K,{\gamma}), \chi) = \eta_n(P, \chi_P) +
	\sum_{i \notin J} \eta_K(\omega^{\chi({\gamma}_i)}).\]
Here, recall that ${\gamma}_1,\ldots,{\gamma}_n$ denote lifts of the infection curve ${\gamma}$.
\end{proposition}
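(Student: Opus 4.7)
The plan is to compare the Mayer-Vietoris sequences in twisted homology of two natural decompositions of $\Sigma_n(P(K,\eta))$ and $\Sigma_n(P)$. Since $w \equiv 0 \pmod n$, we have $h = n$, so Lemma~\ref{lem:Decomposition} yields
\[ \Sigma_n(P) = A \cup_T \bigsqcup_{i=1}^n V^i, \qquad \Sigma_n\big(P(K,\eta)\big) = A \cup_T \bigsqcup_{i=1}^n X_K^i,\]
where $A := \Sigma_n(P) \sm \bigsqcup_i \nu(\eta_i)$, the $V^i := \nu(\eta_i)$ are solid tori with core $\eta_i$, and $T := \bigsqcup_i T^i$ with $T^i := \partial V^i = \partial X_K^i$. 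Partition the indices as $J := \{i \colon \chi(\eta_i) = 0\}$ and $J^c := \{1,\ldots,n\} \setminus J$.

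I would first compute the relevant twisted homology of the pieces. Using Lemma~\ref{lem:CorrespondenceCharacters} and the identifications $\wt\mu^i_\eta \sim \wt\lambda^i_K$, $\wt\lambda^i_\eta \sim \wt\mu^i_K$ of Lemma~\ref{lem:Decomposition}, together with the fact that $\wt\lambda^i_K$ bounds a lift of a Seifert surface, the restriction of $\chi$ to $T^i$ sends $\wt\mu^i_\eta \mapsto 0$ and $\wt\lambda^i_\eta \mapsto \chi(\eta_i)$. Hence for $i \in J^c$ we have $H_*(T^i;\C^\chi) = H_*(V^i;\C^\chi) = 0$, while $H_0(X_K^i;\C^{\chi_i}) = 0$ and $\dim H_1(X_K^i;\C^{\chi_i}) = \eta_K(\omega^{\chi(\eta_i)})$ by definition of the multivariable nullity. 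For $i \in J$ all three characters are trivial, and crucially the inclusion induced maps $H_*(T^i;\C) \to H_*(V^i;\C)$ and $H_*(T^i;\C) \to H_*(X_K^i;\C)$ agree under these identifications: on $H_1$ the class $\wt\lambda^i_\eta = \wt\mu^i_K$ survives as a generator, while $\wt\mu^i_\eta = \wt\lambda^i_K$ becomes null-homologous (bounding the meridional disc of $V^i$ and a lift of a Seifert surface of $K$ in $X_K^i$, respectively).

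Next I would extract the relevant parts of the two Mayer-Vietoris long exact sequences as short exact sequences $0 \to \op{coker}(\alpha) \to H_1 \to \ker(\beta) \to 0$, with $\alpha$ and $\beta$ denoting the Mayer-Vietoris maps into $H_1$ and out of $H_0$, respectively (and $\alpha'$, $\beta'$ for the $\Sigma_n(P(K,\eta))$-side). The previous paragraph shows that $\alpha$ and $\alpha'$ agree on their $J$-portions, while the $J^c$-summand $\bigoplus_{i \in J^c} H_1(X_K^i;\C^{\chi_i})$ enters $\op{coker}(\alpha')$ with trivial incoming differential (since $H_1(T^{J^c};\C^\chi) = 0$), yielding $\op{coker}(\alpha') \cong \op{coker}(\alpha) \oplus \bigoplus_{i \in J^c} H_1(X_K^i;\C^{\chi_i})$ by Lemma~\ref{lem:HomolLem}(\ref{item:HomolLem2}). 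Similarly $\ker(\beta') = \ker(\beta)$, because the $J^c$-parts of $H_0(T^{J^c};\C^\chi)$ and $H_0(X_K^{J^c};\C^\chi)$ vanish and the $J$-portions of $\beta$ and $\beta'$ coincide. Taking dimensions gives
\[ \eta_n\big(P(K,\eta),\chi\big) = \eta_n(P,\chi_P) + \sum_{i \in J^c} \eta_K\big(\omega^{\chi(\eta_i)}\big), \]
as desired.

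The main obstacle will be carefully tracking the meridian-longitude identifications under Lemma~\ref{lem:Decomposition} in order to recognize that the $J$-portions of the two Mayer-Vietoris sequences are literally isomorphic, including compatibility of the boundary maps into $H_*(A;\C^\chi)$; these latter maps are identical simply because the gluings along $T^J$ are the same in both decompositions.
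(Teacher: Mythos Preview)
Your proof is correct and follows essentially the same strategy as the paper: both arguments compare the Mayer--Vietoris sequences for the decompositions of $\Sigma_n(P(K,\eta))$ and $\Sigma_n(P)$ along the tori $T^i$, exploit the vanishing of $H_*(T^i;\C^\chi)$ for $i\notin J$, and invoke Lemma~\ref{lem:HomolLem} to split off the summand $\bigoplus_{i\notin J} H_1(X_K^i;\C^{\chi_i})$. The only organizational difference is that the paper works with the single Mayer--Vietoris sequence for $\Sigma_n(P(K,\eta))$, successively simplifies it (in particular filling back in the tori $\nu(\eta_i)$ for $i\notin J$), and only at the end runs a second Mayer--Vietoris to recover $H_1(\Sigma_n(P);\C^{\chi_P})$; you instead set up both sequences in parallel and compare their $\op{coker}(\alpha)$ and $\ker(\beta)$ directly, which is slightly cleaner but amounts to the same computation.
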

\begin{proof}
Recall the decomposition~$\Sigma_n(P(K, {\gamma})) = \Sigma_n( P ) \sm \bigcup_i \nu ({\gamma}_i) \cup_i X_K^i$ that was described in Section~\ref{sub:BranchedSatellite}.
 Consider the following Mayer-Vietoris sequence with coefficients in~$\C^\chi$:
\begin{align}\label{eq:MV1}
&\to \bigoplus_i H_1(\partial \nu ({\gamma}_i)) \to H_1( \Sigma_n( P ) \sm \bigcup_i \nu ({\gamma}_i)) \oplus_i H_1(X_K^i) \to H_1(\Sigma_n(P(K, {\gamma}))) \to \phantom{0} \\
&\to \bigoplus_i H_0(\partial \nu ({\gamma}_i)) \to H_0( \Sigma_n( P ) \sm \bigcup_i \nu ({\gamma}_i)) \oplus_i H_0(X_K^i) \to H_0(\Sigma_n(P(K, {\gamma}))) \to 0. \nonumber
\end{align}
Since~$\chi$ is non-trivial, we deduce that $H_0(\Sigma_n(P(K, {\gamma}));\C^\chi) = 0$.
Given a connected space~$X$, recall that if $\psi \colon \pi_1(X \times S^1) \to \C$ is a homomorphism that is non-trivial on~$[\{\pt\} \times S^1]$, then~$C(X \times S^1;\C^{\psi})$ is acyclic.
In particular, we have $H_*(\partial \nu ({\gamma}_i))=0$ for~$i \notin J$.
Apply this to the sequence in~\eqref{eq:MV1} to obtain
\begin{align}\label{eq:MV2}
&\to \bigoplus_{i\in J} H_1(\partial \nu ({\gamma}_i)) \to H_1( \Sigma_n( P ) \sm \bigcup_i \nu ({\gamma}_i)) \oplus_i H_1(X_K^i) \to H_1(\Sigma_n(P(K, {\gamma})))\to \\
&\to \bigoplus_{i\in J} H_0(\partial \nu ({\gamma}_i)) \to H_0( \Sigma_n( P ) \sm \bigcup_i \nu ({\gamma}_i)) \oplus_{i \in J} H_0(X_K^i) \to 0. \nonumber
\end{align}
When $i \notin J$, we have $\chi({\gamma}_i) \neq 0$ and therefore
a Mayer-Vietoris argument also shows that we can fill back in the solid tori~$\nu ({\gamma}_i)$ without changing homology: namely one has $H_k( \Sigma_n( P ) \sm \bigcup_{i} \nu({\gamma}_i);\C^{\chi_P}) =  H_k( \Sigma_n( P ) \sm \bigcup_{i \in J} \nu ({\gamma}_i);\C^{\chi_P})$.

So far, we have simplified the Mayer-Vietoris sequence~\eqref{eq:MV2} to the sequence
\begin{align}\label{eq:MV3}
&\to \bigoplus_{i\in J} H_1(\partial \nu ({\gamma}_i)) \to 
\left\{ \begin{matrix}
H_1( \Sigma_n( P ) \sm \bigcup_{i\in J} \nu ({\gamma}_i))\\
\oplus_{i \notin J} H_1(X_K^i)\\
\oplus_{i \in J} H_1(X_K^i)
\end{matrix}\right\}
\to H_1(\Sigma_n(P(K, {\gamma})))\to\\
&\to \bigoplus_{i\in J} H_0(\partial \nu ({\gamma}_i)) \to 
	\left\{ \begin{matrix}
		H_0( \Sigma_n( P ) \sm \bigcup_{i \in J} \nu ({\gamma}_i))\\
		\oplus_{i \notin J} H_0(X_K^i) \\
		\oplus_{i \in J} H_0(X_K^i)
	\end{matrix}\right\}
\to 0.\nonumber
\end{align}
We focus on the degree~$0$ part of this sequence. 
For $i \in J$, we know that~$\chi_i(\mu^i_K)=~0$ and so the coefficient system on $X_K^i$ is trivial, i.e.\ $H_k(X_K^i;\C^{\chi_i})=H_k(X_K^i;\C)$;
the same observation holds for $\nu({\gamma}_i)$. Thus $\oplus_{i \in J} H_0(\nu({\gamma}_i);\C) \to \oplus_{i \in J} H_0(X_K^i;\C)$ is an isomorphism and the sequence~\eqref{eq:MV3} reduces to:
\begin{equation}\label{eq:MV4}
 \to \bigoplus_{i\in J} H_1(\partial \nu ({\gamma}_i)) \to 
\left\{ \begin{matrix}
H_1( \Sigma_n( P ) \sm \bigcup_{i\in J} \nu ({\gamma}_i))\\
\oplus_{i \notin J} H_1(X_K^i; \C^{\chi_i})\\
\oplus_{i \in J} H_1(X_K^i; \C)\\
\end{matrix}\right\}
\to H_1(\Sigma_n(P(K, {\gamma}))) \to 0.
\end{equation}

Recall that~$H_1(\partial \nu ({\gamma}_i);\C)=\C \langle \mu^i_K, \mu_{{\gamma}_i}\rangle$. Also, $\bigoplus_{i \in J} \C\langle \mu^i_K\rangle \xrightarrow{} H_1(X_K^i; \C)$ is an isomorphism. We apply Lemma~\ref{lem:HomolLem}~(\ref{item:HomolLem1}) to simplify the sequence~\eqref{eq:MV4} to 
\begin{equation}\label{eq:MV5}
  \bigoplus_{i\in J} \C\langle \mu_{{\gamma}_i}\rangle \to H_1( \Sigma_n( P ) \sm \bigcup_{i\in J} \nu ({\gamma}_i)) \oplus_{i \notin J} H_1(X_K^i; \C^{\chi_i}) \to H_1(\Sigma_n(P(K, {\gamma}))) \to 0.
\end{equation}

Before concluding, we must recover the homology of $\Sigma_n( P )$.
By filling the boundary tori of~$\Sigma_n( P ) \sm \bigcup_{i\in J} \nu ({\gamma}_i)$, the Mayer-Vietoris exact sequence and Lemma~\ref{lem:HomolLem}~(\ref{item:HomolLem1}) give
\begin{equation}\label{eq:FillTori} \to \bigoplus_{i\in J} \C\langle \mu_{{\gamma}_i}\rangle \to H_1( \Sigma_n( P ) \sm \bigcup_{i\in J} \nu ({\gamma}_i); \C^{\chi_P}) \to H_1(\Sigma_n(P); \C^{\chi_P}) \to 0.
\end{equation}
Applying Lemma~\ref{lem:HomolLem}~(\ref{item:HomolLem2}) to the sequences \eqref{eq:MV5} and \eqref{eq:FillTori} leads to the short exact sequence
\[ 0 \to H_1( \Sigma_n( P ); \C^{\chi_P}) \oplus_{i \notin J} H_1(X_K^i; \C^{\chi_i}) \to H_1(\Sigma_n(P(K, {\gamma})); \C^\chi) \to 0.\]
Consequently, $\eta_n(P(K, {\gamma}), \chi) = \eta_n(P, \chi_P) + \sum_{i \notin J}  \dim_\C H_1(X_K^i;\C^{\chi_i})$.
When $i \notin J$, we have~$\dim_\C H_1(X_K^i;\C^{\chi_i})=\eta_K(\omega^{\chi_i({\gamma}_i)})$ (see e.g. \cite[Proposition 3.4]{ConwayNagelToffoli}) and the proof is concluded.
\end{proof}

\bibliographystyle{alpha}
\bibliography{biblio}
\end{document}